\tikzset{
    >=stealth,
    every picture/.style={thick},
    graphs/every graph/.style={empty nodes},
}
\tikzstyle{vertex}=[
\tikzstyle{printersafe}=[decoration={snake,amplitude=0pt}]
\newcommand{\Pic}{\operatorname{Pic}}
\newcommand{\supp}{\operatorname{supp}}
\newcommand{\ee}{\mathcal{E}}
\newcommand{\oo}{\mathcal{O}}
\def\O#1.{\mathcal {O}_{#1}}			
\def\pr #1.{\mathbb P^{#1}}				
\def\af #1.{\mathbb A^{#1}}			
\def\ses#1.#2.#3.{0\to #1\to #2\to #3 \to 0}	
\def\xrar#1.{\xrightarrow{#1}}			
\def\K#1.{K_{#1}}						
\def\bA#1.{\mathbf{A}_{#1}}			
\def\bM#1.{\mathbf{M}_{#1}}				
\def\bL#1.{\mathbf{L}_{#1}}				
\def\bB#1.{\mathbf{B}_{#1}}				
\def\bK#1.{\mathbf{K}_{#1}}			
\def\subs#1.{_{#1}}					
\def\sups#1.{^{#1}}
  \newtheorem{theorem}{Theorem}[section]
  \newtheorem{lemma}[theorem]{Lemma}
  \newtheorem{proposition}[theorem]{Proposition}
  \newtheorem{corollary}[theorem]{Corollary}
  \newtheorem{definition}[theorem]{Definition}
\newtheorem{remark}[theorem]{Remark}
\theoremstyle{remark}
\numberwithin{equation}{section}
\newcounter{rownumber}[figure] 
\newcounter{rownumber-irr}[figure] 
\newcounter{rownumber-p1}[figure] 
\begin{document}

\title{K-polystability of $3$-dimensional log Fano pairs of~Maeda type}

\author[K.~Loginov]{Konstantin Loginov}
\email{loginov@mi-ras.ru}
\address{Steklov Mathematical Institute of Russian Academy of Sciences, Moscow, Russia.}

\subjclass[2010]{Primary 14J45, secondary 14J30.}

\begin{abstract}
Using the Abban-Zhuang theory and the classification of three-dimensional log smooth log Fano pairs due to Maeda, we prove that almost all threefold log Fano pairs $(X, D)$ of Maeda type with reducible boundary $D$ are K-unstable for any values of the boundary coefficients. We also correct several inaccuracies in Maeda's classification.
\end{abstract}
\maketitle


\tableofcontents
\section{Introduction}

K-stability is an algebraic invariant that characterises the existence of K\"ahler-Einstein metrics on Fano varieties over the field of complex numbers. More precisely, by \cite{CDS15, Tia15}, a K\"ahler-Einstein metrics exists on a smooth Fano variety if and only if the variety is K-polystable. For a survey on K-stability, see \cite{Xu21}. In \cite{ACCFKMSSV21}, K-stability of smooth three-dimensional Fano varieties was investigated, and the problem of characterising K-polystable varieties was solved for a general element in each of $105$ families of smooth Fano threefolds. 

In this paper, we treat a similar problem for three-dimensional log Fano pairs. K-stability for pairs was defined in \cite{Don12}, see also \cite{OS15}. K-stability for pairs, also called log K-stability, is connected with the existence of K\"ahler-Einstein edge (KEE) metrics, see \cite{R14} for a survey. 

Following \cite{Fu20a}, we work with \emph{log Fano pairs of Maeda type}, that is, klt log Fano pairs $(X, D)$ with $D\neq 0$ such that there is a log smooth log Fano pair $(X, \Delta)$ satisfying $\mathrm{Supp}(D)\subseteq\mathrm{Supp}(\Delta)$, where $\Delta$ is an integral divisor. Such pairs provide natural examples to the study of K-stability of pairs. In contrast to the situation with smooth Fano varieties, there are infinitely many pairwise non-isomorphic varieties $X$ such that $(X, \Delta)$ is a log smooth log Fano pair of fixed dimension~$n\geq 2$.

In dimension two, in \cite{Fu20a} it was shown that two-dimensional log Fano pairs of Maeda type are K-unstable, with two exceptions, see Theorem~\ref{thm-2-dim}. In arbitrary dimension, by \cite{Fu16b} it is known that, for a log Fano pair of Maeda type, if the coefficients of the boundary are sufficiently close to $1$, then this pair is K-unstable, see Theorem \ref{thm-G-small-values}. This answers a conjecture proposed in \cite{CR15}, where a more general notion of asymptotically log Fano varieties was introduced. In \cite{CR18}, it was proven that a pair which belongs to the class of asymptotically log del Pezzo surfaces is K-unstable, if the values of the boundary coefficients are close to $1$.  

We prove that in dimension $3$ and in the case of reducible $D$, log Fano pairs of Maeda type $(X, D=\sum c_i D_i)$ are K-unstable for any values of the coefficients $0\leq c_i<1$, with five exceptions. More precisely, we prove the following theorem.

\begin{theorem}
\label{main-thm}
Let $(X, D)$ be a log Fano pair of Maeda type with $\dim X = 3$. Assume that $D$ is reducible. Then the pair $(X, D)$ is K-semistable if and only if it belongs to the following list:
\begin{enumerate}
\item
\label{main-thm-case1}
$(\mathbb{P}^3, aD_1 + bD_2)$ where $D_1$ is a smooth quadric, $D_2$ is a plane, $3b\leq2a$, and $6a-b\leq 4$.
\item
\label{main-thm-case2}
$(Q, aD_1 + bD_2)$ where $Q\subset\mathbb{P}^4$ is a smooth quadric, $D_i$ are hyperplane sections, $3a-b\leq1$, and $3b-a\leq 1$.
\item
\label{main-thm-case3}
$(\mathbb{P}^1\times \mathbb{P}^2, aD_1 + bD_2)$ where $D_1\sim H+F$, $D_2\sim F$, $H$ is pullback of a line on $\mathbb{P}^2$, $F$ is a pullback of a point on $\mathbb{P}^1$, and one of the following conditions holds:

$0\leq a\leq \alpha\approx0.507$, and $b\leq a/2$, where $\alpha$ is a root of the equation $-40+106\alpha-58\alpha^2+9\alpha^3=~0$, 

or $\alpha\approx0.507\leq a\leq 20/37$, and  

$(-4+13 a-4 a^2)/(3 a)-\sqrt{(32-88 a+84 a^2-34 a^3+5 a^4)/a^2}/(3 \sqrt{2})\leq b\leq a/2$.
\item
\label{main-thm-case4}
$(X, aD_1 + bD_2)$, where $X$ is the blow up 
of quadric $Q\subset \mathbb{P}^4$ in a plane conic $C\subset Q$, $D_1=E$ is the exceptional divisor of the blow up $X\to Q$, $D_2$ is a preimage of a hyperplane section of $Q$ that contains the conic $C$, and the following conditions hold: 

$0\leq a\leq(\sqrt{10}-2)/3$ and $0\leq b\leq 1-\sqrt{4+4a+3a^2}/\sqrt{6}$.
\item
\label{main-thm-case5}
$(X, aD_1 + bD_2)$, where $X$ is a divisor of bidegree $(1, 1)$ in $\mathbb{P}^2\times\mathbb{P}^2$, $D_1$ is a divisor of bidegree $(1, 0)$, $D_2$ is a divisor of bidegree $(0, 1)$, and 

$0\leq a\leq 2-\sqrt{3}$ and 
$0\leq b\leq (16-3 a)/8 - \sqrt{3} \sqrt{64-32 a+3 a^2}/8$,

or $2-\sqrt{3}\leq a\leq 2/7$ and $(-4+16 a-4 a^2)/(3 a)\leq b\leq (16-3 a)/8 - \sqrt{3} \sqrt{64-32 a+3 a^2})/8$.

\end{enumerate}
Moreover, if the inequalities for the coefficients are strict, then the pairs are K-polystable.
\end{theorem}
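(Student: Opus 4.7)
The strategy is to go through Maeda's classification of three-dimensional log smooth log Fano pairs $(X,\Delta)$ component by component, and for each fixed $X$ and each admissible subdivisor $D=\sum c_iD_i$ with $\operatorname{Supp}(D)\subseteq\operatorname{Supp}(\Delta)$, either exhibit a destabilising valuation or verify K-semistability through the Abban-Zhuang method. By Fujita-Li, K-semistability of $(X,D)$ is equivalent to $\beta_{X,D}(E)=A_{X,D}(E)-S_{X,D}(E)\geq 0$ for every prime divisor $E$ over $X$; for K-instability it suffices to produce a single $E$ with $\beta_{X,D}(E)<0$, while for K-polystability (when the inequalities are strict) one needs the stronger Abban-Zhuang lower bound on the $\delta$-invariant.

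The first step is to pin down Maeda's list in dimension three and correct the inaccuracies mentioned in the abstract; this is the bookkeeping backbone of the paper. For each entry, I would isolate those pairs where $D$ has at least two components, and for each choice of pair of boundary components $(D_1,D_2)$ write down $-K_X-aD_1-bD_2$ and its volume as an explicit polynomial in $(a,b)$. The natural first candidates for destabilising divisors are the components $D_i$ themselves: $\beta_{X,D}(D_i)=1-c_i-S_{X,D}(D_i)$ with $S_{X,D}(D_i)=\frac{1}{\operatorname{vol}(-K_X-D)}\int_0^{\tau}\operatorname{vol}(-K_X-D-tD_i)\,dt$, and direct volume computations handle most families at once, often yielding $\beta(D_i)<0$ throughout the admissible box $0\leq c_i<1$. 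When no $D_i$ works, I would try a well-chosen divisor over $X$: the exceptional divisor of a blow up along a curve contained in $D_1\cap D_2$, a fibre of a projection, or a hyperplane section through a singular stratum of $\Delta$. Dreamy candidates such as toric valuations on $(\mathbb{P}^1)^3$, blow ups of lines on $\mathbb{P}^3$, or the exceptional divisor on $Q\subset\mathbb{P}^4$ blown up in a conic, are natural to test because they are the valuations that compute the $\alpha$-invariant of the ambient Fano.

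The heart of the argument is the positive direction, namely establishing K-semistability in the five listed cases. Here the plan is to run Abban-Zhuang on a suitable flag $E\supset Z$, where $E$ is a divisor whose $\beta$ is expected to be the sharpest obstruction (in each of the five cases $E$ is naturally one of the boundary components or the exceptional divisor) and $Z$ is a curve in $E$ lying on the common intersection of boundary components. Restricting $-K_X-D$ to $E$ produces a log Fano surface pair, and restricting again to $Z$ reduces to an explicit one-parameter computation of $S(W_{\bullet,\bullet,\bullet};Z)$ in terms of $(a,b)$. The inequality $\delta_Z(E,W_{\bullet,\bullet}^E)\geq 1$ then translates to the precise region in the $(a,b)$-plane stated in \eqref{main-thm-case1}-\eqref{main-thm-case5}, and the boundary of the region is exactly where $\beta$ of some component or some auxiliary flag valuation vanishes, giving the polystability-versus-semistability dichotomy. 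Along the way Theorem \ref{thm-G-small-values} immediately rules out a neighbourhood of $c_i\to 1$, which is a useful sanity check.

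The main obstacle is precisely this last step: verifying the Abban-Zhuang lower bound throughout a two-parameter family of coefficients. Each case splits into subregions according to which linear system fails to be nef or big on $E$ first, and the relevant Zariski decomposition on the surface $E$ has to be written down explicitly; the irrational endpoints in cases \eqref{main-thm-case3}-\eqref{main-thm-case5} (the roots of the cubic $-40+106\alpha-58\alpha^2+9\alpha^3=0$ and the square-root thresholds) arise from matching two different Zariski chambers along a curve where $S_Z$ equals $A_Z$. Handling all the chambers uniformly, and checking that no other valuation over $X$ beats the chosen flag, will be the most delicate part of the argument.
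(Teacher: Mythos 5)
Your overall strategy coincides with the paper's: run through Maeda's list (corrected and organised by the special MMP), kill almost every family by computing $\beta$ of divisors on $X$ itself (divisorial instability suffices in every unstable case, so no valuations over $X$ are actually needed there), and treat the five surviving families by Abban--Zhuang flags $Y\supset Z$ with $Z$ a curve in the boundary. One small correction of emphasis: in the paper the regions of coefficients in (i)--(v) are cut out by divisorial semistability, i.e.\ by $\beta'_{(X,D)}(D_1)\geq 0$ and $\beta'_{(X,D)}(D_2)\geq 0$ (together with Fujita's reduction, Lemma 9.1 and Remark 9.6 of \cite{Fu16a}, which limits the divisor classes one must test); the Abban--Zhuang computation does not produce the region but only verifies that inside it the curve centers satisfy $\delta_Z>1$.

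The genuine gap is in your last step, which you yourself flag but do not resolve: ``checking that no other valuation over $X$ beats the chosen flag.'' An Abban--Zhuang estimate for a single flag bounds $\delta_Z$ only for divisors whose center contains that particular $Z$; it says nothing about divisors over $X$ centered elsewhere, so by itself it cannot yield K-semistability or K-polystability on the whole region. The paper closes this by an equivariant reduction: it proves that the automorphism group $G$ of each pair $(X,\Delta)$ in the five cases is reductive (e.g.\ $\mathrm{GL}(2,\mathbb{C})$, $\mathrm{GL}(2,\mathbb{C})\rtimes\mathbb{Z}/2$, $\mathrm{PGL}(2,\mathbb{C})$), invokes Zhuang's equivariant criterion (Theorem \ref{thm-G-invariant}) so that only $G$-invariant divisors over $X$ need be tested, and then classifies the proper $G$-invariant centers --- which turn out to be just $D_1$, $D_2$, $Z=D_1\cap D_2$ and occasionally one further invariant curve $Z'$. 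Divisorial stability handles the divisorial centers, and the Abban--Zhuang flag handles each invariant curve; without the reductivity statements and the determination of the invariant centers, your plan has no mechanism to exclude an arbitrary destabilising valuation, and the K-polystability (and even K-semistability) conclusion does not follow. Note also that the polystability-versus-semistability dichotomy in the paper comes from the strict-versus-nonstrict version of Zhuang's criterion, not from locating where $S_Z=A_Z$ on the flag.
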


The case of reducible boundary is interesting, because in this case we have several coefficients of boundary components, and we can look at the region of coefficients for which the pair is K-semistable (or K-polystable). 
Below we show these regions for the cases listed in Theorem \ref{main-thm}. In the case of irreducible boundary, we expect that K-polystability occurs more often. 

\vspace{1.5em}
\hspace{2em}
\includegraphics[width=3cm, height=3cm]{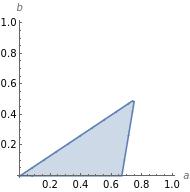}\quad
\includegraphics[width=3cm, height=3cm]{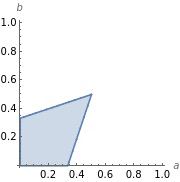}
\includegraphics[width=3cm, height=3cm]{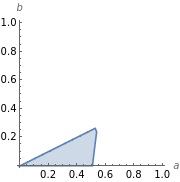}
\includegraphics[width=3cm, height=3cm]{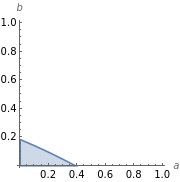}
\includegraphics[width=3cm, height=3cm]{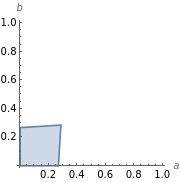}
\begin{center}
\textit{Figure $1$. Values of boundary coefficients for which log Fano pairs of Maeda type $(X, D)$ corresponding to the cases (i)--(v) as in Theorem \ref{main-thm} are K-semistable.}
\end{center}
\vspace{0.5em}
 
For all three-dimensional log smooth log Fano pairs $(X, \Delta=\sum D_i)$, we also determine the values of coefficients $c_i$ for which the corresponding pair $(X, D=\sum c_iD_i)$ is a log Fano pair of Maeda type, that is, $-K_X-D$ is ample. These regions of coefficients were studied in \cite{CMR20}, where they are called \emph{bodies of ample angles}. 
 
Initially, K-stability was defined in terms of one-parameter degenerations of a given variety and a Futaki invariant of such degenerations. However, in the works \cite{Li17}, \cite{Fu19}, a valuative criterion for K-stability was proposed. For convenience, we use it as a definition, see Definition \ref{def-k-stability}. So the problem is reduced to the computation of $\beta$-invariants associated to divisors over the given variety with respect to a pair. If such invariant is negative for some divisor, the pair is K-unstable. If one considers only divisors on the given variety, one comes to the notion of \emph{divisorial stability}, see Definition \ref{def-k-stability}. 

The main ingredient in the proof of Theorem \ref{main-thm} is the classification of threefold log smooth log Fano pairs $(X, \Delta=\sum D_i)$ due to Maeda \cite{M83}. We describe a natural minimal model program for such pairs, and a method to distinguish them. Also, we make some corrections to the Maeda classification. 
We summarise this in the following theorem. 

\begin{theorem}[{cf. \cite{M83}}]
\label{main-thm2}
Let $(X, \Delta)$ be a log smooth log Fano pair such that $\dim X = 3$. Then each step of an MMP on $K_X$ with scaling on $-K_X-\Delta$ is the blow down of $\mathbb{P}^2$ to a smooth point, and this MMP terminates on a Mori fiber space. 

Moreover, if $\Delta$ is reducible, then at most one step in this MMP is a birational contraction. All such pairs $(X, \Delta)$ belong to the Table \hyperlink{table-1}{1}. 
\end{theorem}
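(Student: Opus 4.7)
The plan is to run a $K_X$-MMP with scaling on the ample divisor $H = -K_X-\Delta$, analyze each possible extremal contraction using Mori's classification of divisorial extremal rays on smooth threefolds, and then iterate inside the log smooth log Fano category. This MMP exists and terminates because $-K_X = H + \Delta$ is big (as a sum of ample and effective), so $K_X$ is not pseudo-effective; scaling by an ample divisor then produces a well-defined sequence of contractions ending in a Mori fiber space.

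The heart of the argument is to show that each intermediate birational step is of Mori type E1, i.e., the blow-down of $E \cong \mathbb{P}^2$ with $N_{E/X} \cong \mathcal{O}_{\mathbb{P}^2}(-1)$ to a smooth point. Mori's classification of divisorial extremal rays on a smooth threefold gives types E1--E5 (contractions to a point, of which only E1 keeps the target smooth) together with contractions of a ruled surface to a smooth curve. Types E2--E5 introduce a singularity (quotient or cDV) on the target, destroying log smoothness and preventing further iteration, so the argument forces only E1 among these. The ruled-surface-to-curve case must be ruled out using the log Fano condition: if $E \to C$ is a $\mathbb{P}^1$-bundle realizing the contraction, then ampleness of $(-K_X-\Delta)|_E$ together with $\Delta|_E$ being a reduced SNC divisor severely restricts how $\Delta$ can meet the fibers of $E$, and I would argue these restrictions are incompatible with the global log smooth log Fano structure of $(X,\Delta)$.

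Next I would verify that an E1 contraction $\pi: X \to X'$ preserves the log smooth log Fano property: locally at the image point $p = \pi(E)$, each component $D_i$ of $\Delta$ meeting $E$ must map to a smooth divisor through $p$, and the images must remain SNC there. This is standard once one checks that $E \cap D_i$ is a $(-1)$-curve on $D_i$, so that $\pi|_{D_i}$ is a smooth Castelnuovo contraction. Once this is established, the structure theorem follows by induction on Picard rank, and the terminal Mori fiber space must be one of the familiar three-dimensional Fano, conic bundle, or del Pezzo fibration models.

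The moreover part for reducible $\Delta$ then reduces to a case analysis: starting from each possible Mori fiber space compatible with a reducible log smooth log Fano boundary, and blowing up smooth points in an SNC-compatible way, one enumerates all pairs that remain log Fano on top. The main obstacle I anticipate is precisely this case analysis---both the rigorous elimination of the ruled-surface-to-curve contraction and the detailed bookkeeping needed to compile Table 1 correctly and to identify Maeda's original inaccuracies.
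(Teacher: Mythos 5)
There is a genuine gap at the heart of your argument: you never establish \emph{why} the extremal ray selected at each step of the MMP with scaling must be the one whose contraction blows down $\mathbb{P}^2$ to a smooth point. The paper's mechanism is a length bound: choosing an integral $1$-complement $\Gamma\sim -K_X-\Delta$ (Proposition \ref{prop-1-complement}) and scaling by it, one shows that the contracted ray $R$ satisfies $\Delta\cdot R>0$ and $\Gamma\cdot R>0$ (this uses that the nef threshold $\epsilon$ is $>1$, because $K_X+\Gamma\sim-\Delta$ is not nef), whence $(-K_X)\cdot R=\Gamma\cdot R+\Delta\cdot R\geq 2$ by integrality; Mori's classification of $K$-negative extremal contractions on a smooth threefold then leaves only the blow-down of $\mathbb{P}^2$ with normal bundle $\mathcal{O}(-1)$, since every other birational type contracts some curve of anticanonical degree $1$. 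Your proposed exclusions do not substitute for this. Ruling out the contractions with singular target ``because they would destroy log smoothness and prevent further iteration'' is circular: the MMP does not know you wish to stay in the log smooth category, so you must prove such rays are not selected, not merely observe that their selection would be inconvenient. Worse, your plan to rule out the ruled-surface-to-curve contraction ``using the log Fano condition'' rests on a false premise: such contractions \emph{do} exist on log smooth log Fano threefolds with integral boundary (for example $X=\mathbb{P}_{\mathbb{P}^1}(\mathcal{O}\oplus\mathcal{O}\oplus\mathcal{O}(1))$, the blow-up of $\mathbb{P}^3$ along a line, which appears in Maeda's list with suitable $\Delta$ and carries the blow-down of a ruled surface to that line). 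The theorem only asserts that the MMP \emph{with scaling on $-K_X-\Delta$} never picks such a ray, and that is exactly what the length bound delivers; without it your case analysis cannot close.

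A secondary issue: your induction ``inside the log smooth log Fano category'' presumes that after blowing down $\mathbb{P}^2$ the pushed-forward pair is again log smooth log Fano, which is neither proved nor needed in the paper. The paper's proof of Proposition \ref{prop-MMP} only tracks that $X_i$ stays smooth, that $\Delta_i$ and $\Gamma_i$ stay nonzero integral effective divisors, and that $K_{X_i}+\epsilon_{i-1}\Gamma_i$ is nef while $K_{X_i}$ is not; this is enough to repeat the length-bound argument at every step and to conclude termination on a Mori fiber space. Finally, for the ``moreover'' part you correctly anticipate a case analysis, but note that the paper obtains the bound of at most one birational step for reducible $\Delta$ by combining the special MMP with Maeda's classification (Corollary \ref{X-classification} and Sections \ref{sec-blow-up-of-quadric}--\ref{sec-fano}), not by an abstract argument, so that bookkeeping is an essential, not incidental, part of the proof.
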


To prove Theorem \ref{main-thm}, we also need the Abban-Zhuang theory \cite{AZ20} which allows to work with K-stability inductively. In \cite{ACCFKMSSV21}, using this theory, some explicit estimates were obtained, see Proposition \ref{prop-formulas-for-beta}. We use them to check K-polystability.

The paper is organised as follows. In section \ref{sec-prelim} we recall the main definitions and facts on log smooth log Fano pairs. In section \ref{sect-classification} we recall the classification of threefold log smooth log Fano pairs, and describe an MMP with scaling for such pairs. In section \ref{sect-k-stability} we discuss K-stability and Abban-Zhuang theory. Then we start to consider log Fano pairs of Maeda type case by case according to the classification. In section \ref{sec-blow-up-of-quadric} we consider the blow up of a quadric. In section \ref{section-p2-bundles} we consider $\mathbb{P}^2$-bundles over $\mathbb{P}^1$. In section \ref{section-quadric-bundles} we consider quadric bundles over $\mathbb{P}^1$. In section \ref{section-p1-bundles} we consider $\mathbb{P}^1$ over log del Pezzo surfaces. In section \ref{sec-blow-up-of-p2-bundle} we consider the blow up of a point on certain $\mathbb{P}^1$-bundles over $\mathbb{P}^1$. In section \ref{sec-fano} we consider Fano threefolds. In section \ref{sec-proof-main} we prove theorems \ref{main-thm} and \ref{main-thm2}. Finally, in Appendix \ref{sect-table} we present a table that contains all the threefold log smooth log Fano pairs with reducible boundary. 

\textbf{Acknowledgements}. The author thanks Artem Avilov, Ivan Cheltsov, Aleksei Golota, and Constantin Shramov for useful discussions. The author is grateful to Kento Fujita who pointed out some inaccuracies in the first version of the paper and provided us with \cite{Fu11}, and to Yanir Rubinstein who suggested several improvements. The author thanks the referee for careful reading of the manuscript.

This work is supported by Russian Science Foundation under grant 21-71-00112.

\section{Preliminaries}
\label{sec-prelim}
We work over algebraically closed field of characteristic $0$. For the standard definitions of the minimal model program (MMP for short) we refer to \cite{KM98}. We recall the notion of a pair.

\begin{definition}
Let $X$ be a normal projective variety, and let $\Delta$ be an effective Weil $\mathbb{Q}$-divisor whose coefficients are between $0$ and $1$. We say that $(X, \Delta)$ is a \emph{pair} if $K_X + \Delta$ is $\mathbb{Q}$-Cartier. A pair $(X, \Delta)$ is called \emph{log smooth} if $X$ is smooth, and $\mathrm{Supp}(\Delta)$ has simple normal crossings. 
\end{definition}

\begin{definition}
\emph{Log Fano pair} is a log canonical pair $(X, \Delta)$ such that $-K_X-\Delta$ is ample. In this situation, $X$ is called a \emph{log Fano variety}. If, moreover, the pair $(X, \Delta)$ is log smooth, then we say that $X$ is a \emph{log Fano manifold}.
\end{definition}

Throughout the paper, we will assume that the divisor $\Delta$ is integral.


\begin{definition}[\cite{Fu20a}]
\emph{Log Fano pair of Maeda type} is a klt log Fano pair $(X, D)$ with $D\neq 0$ such that there exists a a log smooth log Fano pair $(X, \Delta)$ with integral $\Delta$, and $\mathrm{Supp}(D)\subseteq\mathrm{Supp}(\Delta)$.  
\end{definition}

We discuss some generalities on log smooth log Fano pairs. Let $(X, \Delta)$ be such pair. By the main result in \cite{Zha06} applied to the pair $(X, (1-\epsilon)\Delta)$ for some $0<\epsilon\ll 1$, the variety $X$ is rationally connected. Let $\Delta = \sum_{i=1}^{k} D_i$ where $D_i$ are prime divisors. By \cite[3.3]{Lo22}, $\Delta$ has no more than $\dim X$ components. In \cite{LM22} it is proven that if this bound is attained, then the pair $(X, \Delta)$ is toric. 
By section $2$ in \cite{M83}, we have $\Pic(X)\simeq \mathrm{H}^2(X, \mathbb{Z})$ and  $h^{i}(\oo_X) = 0$ for $i>0$. Also, $\Pic(X)$ is torsion free. Moreover, $\Pic(X)\otimes \mathbb{R} = \mathrm{N}^1(X)$ since the linear equivalence of divisors coincides with the numerical equivalence. The Mori cone $\overline{\mathrm{NE}}(X)$ is polyhedral and $(K_X+\Delta)$ is negative on $\overline{\mathrm{NE}}(X)\setminus\{ 0\}$. In particular, by Mori theory for any extremal face of $\overline{\mathrm{NE}}(X)$ there exists the corresponding contraction which contracts precisely the curves whose classes belong to that extremal face. For more details on the geometry of log smooth log Fano pairs, see \cite{Fu14b}.

There exists a description of the Picard group of a log smooth log Fano pair. For the definition of the Picard group of a reducible variety, we refer to \cite{Fu14b}.

\begin{theorem}[\cite{Fu14a}, 3.8]
\label{prop-pic-injective}
Assume that $(X, \Delta)$ is a log smooth log Fano pair with integral $\Delta\neq 0$. Then
\begin{enumerate}
\item
either $\mathrm{Pic}(X)\to\mathrm{Pic}(\Delta)$ is injective, or
\item
there exists a $\mathbb{P}^1$-bundle $\pi\colon X\to Y$ such that $\Delta$ is its section. In particular, $\Delta$ is irreducible and $\Delta\simeq Y$.
\end{enumerate}
\end{theorem}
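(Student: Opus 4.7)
The plan is to show that the existence of any nontrivial element in the kernel of the restriction map forces the second alternative: a $\mathbb{P}^1$-bundle structure $\pi\colon X \to Y$ for which $\Delta$ is a section. I would fix $L \in \Pic(X) \setminus \{\mathcal{O}_X\}$ with $L|_\Delta \simeq \mathcal{O}_\Delta$ and extract the fibration from cohomological and numerical constraints.

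From the properties collected just before the theorem, $\Pic(X)$ is torsion-free and $\Pic(X)\otimes\mathbb{R} = \mathrm{N}^1(X)$, so a nontrivial $L$ has nonzero numerical class, and at most one of $L, -L$ can be effective (otherwise their sum would be linearly equivalent to zero while being a sum of two effective divisors). The central tool is the short exact sequence
$$0 \to \mathcal{O}_X(\pm L - \Delta) \to \mathcal{O}_X(\pm L) \to \mathcal{O}_\Delta \to 0,$$
where we have used $L|_\Delta \simeq \mathcal{O}_\Delta$. Ampleness of $-K_X - \Delta$ gives, via Kawamata--Viehweg vanishing, $H^i(X, -\Delta) = 0$ for $i > 0$, hence $\Delta$ is connected. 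For small $\varepsilon > 0$ the perturbations $-K_X - \Delta \pm \varepsilon L$ remain ample, and I would exploit this together with passage to a positive multiple to obtain $H^i(X, mL - \Delta) = 0$ for large $m$ in one of the two directions $\pm L$, producing effective representatives of a positive multiple of $L$ (or $-L$) via the long exact sequence and the surjection onto $H^0(\mathcal{O}_\Delta) = \mathbb{C}$.

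The main obstacle, and the heart of the proof, is turning this effective representative into a genuine $\mathbb{P}^1$-bundle with $\Delta$ as a section. I would analyze the fibration defined by the linear system $|mL|$: since $L|_\Delta$ is trivial, $\Delta$ is contracted fiberwise only along curves over a lower-dimensional base, and log smoothness together with ampleness of $-K_X - \Delta$ force the map to be a fibration of relative dimension one with $\Delta$ horizontal. Adjunction on a general fiber $F$ then yields $(K_X + \Delta) \cdot F = -2 + \Delta \cdot F < 0$, so $F \simeq \mathbb{P}^1$ and $\Delta \cdot F = 1$, i.e. $\Delta$ meets each general fiber transversally at a single point.

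Finally, log smoothness of $(X, \Delta)$ rules out degenerate fibers: any reducible or non-reduced fiber would contain a component disjoint from $\Delta$, but the section property combined with $(K_X+\Delta)$-negativity on every fiber component forces each fiber to be an irreducible smooth $\mathbb{P}^1$. This promotes $\pi$ to a smooth $\mathbb{P}^1$-bundle; the induced map $\Delta \to Y$ is bijective and, by log smoothness, an isomorphism, which in particular implies $\Delta$ is irreducible. The trickiest point is the equidimensionality of $\pi$ and the exclusion of reducible fibers, as this is where the rigidity provided by both log smoothness and the ampleness of $-K_X - \Delta$ must be used simultaneously.
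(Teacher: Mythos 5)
The paper does not actually prove this statement: it is imported verbatim from Fujita [Fu14a, Proposition~3.8], so the only meaningful comparison is with Fujita's argument, which proceeds via extremal contractions attached to a ray on which the kernel class (or its negative) is negative, not via the linear system $|mL|$. Measured against any correct proof, your proposal has a genuine gap at its core. First, the vanishing step is unjustified: to get $H^1(X, mL-\Delta)=0$ from Kawamata--Viehweg you need $mL+(-K_X-\Delta)$ to be nef and big, which for large $m$ forces $L$ (or $-L$) to be nef; the ampleness of $-K_X-\Delta\pm\varepsilon L$ for small $\varepsilon$ only gives vanishing for divisors of the shape $K_X+m(-K_X-\Delta)\pm m\varepsilon L$, from which $mL-\Delta$ cannot be extracted. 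In the basic example $X=\mathrm{Bl}_p\mathbb{P}^3$ with $\Delta$ the strict transform of a plane not through $p$ (a log smooth log Fano pair whose kernel is generated by the exceptional divisor $E$), neither $E$ nor $-E$ is nef, so neither ``direction'' works, and $|mE|$ consists of the single member $mE$, so ``the fibration defined by $|mL|$'' is the constant map.

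Second, and more fundamentally, even when $L$ is nef and semiample the morphism $\phi_{|mL|}$ \emph{contracts} $\Delta$: since $L|_\Delta\cong\mathcal{O}_\Delta$ and $\Delta$ is connected (your connectedness argument is fine), $\phi_{|mL|}(\Delta)$ is a point, so $\Delta$ is vertical, not horizontal, and there is no reason for the fibers to be one-dimensional. Concretely, for $X=\mathbb{P}^1\times\mathbb{P}^2$ and $\Delta=\{\mathrm{pt}\}\times\mathbb{P}^2$ the kernel is generated by the nef fiber class $F$ of $\mathrm{pr}_1$, and $\phi_{|mF|}=\mathrm{pr}_1$ has two-dimensional fibers with $\Delta$ equal to one of them; the $\mathbb{P}^1$-bundle of the conclusion is the \emph{other} projection, which $|mL|$ does not see. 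Thus the heart of the theorem --- producing a contraction $\pi$ for which $\Delta$ is finite, indeed birational, onto the base --- is missing; your adjunction computation and the exclusion of degenerate fibers are the right endgame (though ``a reducible fiber has a component disjoint from $\Delta$'' should be replaced by the degree count $-K_X\cdot C_i\geq 1+\Delta\cdot C_i$ against $-K_X\cdot F=2$, $\Delta\cdot F=1$), but they presuppose exactly the morphism that has to be constructed. The workable route is to take an extremal ray $R$ with $L\cdot R<0$ or $(-L)\cdot R<0$ (no curve of $\Delta$ lies in such a ray, since $L|_\Delta$ is trivial, so $\Delta$ is finite over the image of the contraction) and then to handle the possibility that this contraction is birational or has higher-dimensional fibers --- in the $\mathrm{Bl}_p\mathbb{P}^3$ example the ray with $E\cdot R<0$ gives the birational contraction, and one must pass to $-E$ or iterate --- and this reduction is precisely the nontrivial content your sketch does not address.
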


\begin{corollary}
Let $(X,\Delta)$ be a $3$-dimensional log smooth log Fano pair with reducible boundary. Then $\rho(X)\leq3$.
\end{corollary}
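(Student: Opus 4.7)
The plan is to apply Theorem \ref{prop-pic-injective} iteratively, first in dimension three and then in dimension two, gluing the information via a Mayer--Vietoris computation on $\Delta$. Since $\Delta$ is reducible, the $\mathbb{P}^1$-bundle alternative in Theorem \ref{prop-pic-injective} cannot occur, so $\Pic(X)\hookrightarrow \Pic(\Delta)$.

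Write $\Delta=\sum_{i=1}^{k} D_i$, with $k\in\{2,3\}$ using the bound $k\leq \dim X$ recalled above from \cite{Lo22}. I would use the sheaf exact sequence
\[
1\to \oo_\Delta^*\to \bigoplus_i \oo_{D_i}^*\to \bigoplus_{i<j} \oo_{C_{ij}}^*,
\]
where $C_{ij}=D_i\cap D_j$, and observe that the kernel of the induced map $\Pic(\Delta)\to \bigoplus_i \Pic(D_i)$ is a quotient of tori, hence divisible of zero rank. This gives an injection on free parts $\Pic(X)\hookrightarrow \bigoplus_i \Pic(D_i)$ whose image is cut out by the compatibility of restrictions to the curves $C_{ij}$.

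For each component $D_i$, adjunction shows that $(D_i, B_i)$ with $B_i=\sum_{j\neq i} C_{ij}$ is a log smooth log Fano surface pair; by Koll\'ar's connectedness applied to the log Fano pair $(X,\Delta)$, $\Delta$ is connected, and reducibility ensures $B_i\neq 0$. A further adjunction $K_{C_{ij}}=(K_X+D_i+D_j)|_{C_{ij}}$ combined with ampleness of $-(K_X+\Delta)$ shows that each $C_{ij}$ has genus zero, so $\Pic(C_{ij})\cong \mathbb{Z}$. Applying Theorem \ref{prop-pic-injective} to the pair $(D_i,B_i)$: either $\Pic(D_i)\hookrightarrow \Pic(B_i)\subseteq \bigoplus_{j\neq i} \mathbb{Z}$, bounding $\rho(D_i)$ by the number of components of $B_i$, or $D_i$ is a $\mathbb{P}^1$-bundle over $B_i$ (with $B_i$ a section) and $\rho(D_i)=2$.

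Combining these bounds with the compatibility constraints along the $C_{ij}$'s, the rank accounting yields $\rho(X)\leq 3$ in both remaining cases $k=2$ and $k=3$. The main obstacle will be the rank bookkeeping in the $k=2$ case when $\rho(D_1)=\rho(D_2)=2$: one must verify that the single compatibility condition along $C_{12}$ cuts precisely one dimension from the naive bound $\rho(D_1)+\rho(D_2)=4$, so that $\rho(X)\leq 3$.
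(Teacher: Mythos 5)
There is a genuine gap, and it sits exactly where your proposal stops being explicit. Your skeleton is in fact close to the paper's: both use Theorem \ref{prop-pic-injective} (with the bundle alternative excluded by reducibility) together with the restriction sequence \eqref{picard-exact}, and both reduce to bounding the rank of $\mathrm{Pic}(\Delta)$. The real difference is that you bound $\rho(D_i)$ by applying Theorem \ref{prop-pic-injective} inductively to the surface pairs $(D_i,B_i)$, whereas the paper quotes Maeda's classification of log smooth log del Pezzo pairs. That substitution is reasonable, but the load-bearing final step -- your ``rank accounting'' -- is asserted, not proved, and it is not automatic. In the case $k=3$ with $\rho(D_i)=2$ for all $i$ (this happens, e.g.\ for $(\mathbb{P}^1)^3$ with the three coordinate divisors), the sequence \eqref{picard-exact} gives $\operatorname{rank}\mathrm{Pic}(\Delta)=6-r$, where $r$ is the rank of the image of $\bigoplus_i\mathrm{Pic}(D_i)$ in $\bigoplus_{i<j}\mathrm{Pic}(C_{ij})\cong\mathbb{Z}^3$, so you need $r=3$. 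Positivity of restrictions of ample classes does not force this: the resulting sign constraints are compatible with $r=2$, and $r=2$ can only be excluded by showing that $[C_{ij}]$ and $[C_{il}]$ are never positively proportional in $N_1(D_i)$ -- a genuinely geometric statement (using that $(D_i, C_{ij}+C_{il})$ is log del Pezzo and that the triple stratum is a single point), i.e.\ essentially the case analysis the paper delegates to \cite{M83}. Ironically, the subcase you single out as the main obstacle ($k=2$ with $\rho(D_1)=\rho(D_2)=2$) is the easy one: restricting an ample class of $D_1$ (and $\mathscr{L}_2=\oo$) shows the map to $\mathrm{Pic}(C_{12})\cong\mathbb{Z}$ has rank-one image, so the kernel has rank exactly $3$.

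Two auxiliary claims also need repair. First, ``a quotient of tori, hence divisible of zero rank'' is false for abstract groups: $\mathbb{C}^*$ contains free abelian subgroups of arbitrary rank, so divisibility of the kernel of $\mathrm{Pic}(\Delta)\to\bigoplus_i\mathrm{Pic}(D_i)$ does not by itself prevent the image of $\mathrm{Pic}(X)$ from losing rank. What actually kills the kernel is connectedness of the strata, which is built into the exactness of \eqref{picard-exact} as quoted from \cite{Fu14b}. Second, your adjunction argument only shows that every component of $C_{ij}$ is rational; you also need $C_{ij}$ irreducible (so that $\mathrm{Pic}(C_{ij})\cong\mathbb{Z}$) and, when $k=3$, that $D_1\cap D_2\cap D_3$ is a single point (so that $\mathrm{Pic}(B_i)\cong\mathbb{Z}^2$ in your first alternative). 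The paper obtains these facts from \cite[3.3]{Lo22} or from the classification; without them both the dimension-two induction and the final bookkeeping are incomplete.
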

\begin{proof}
According to \cite[2.8]{Fu14b}, the following sequence is exact:
\begin{equation}
\label{picard-exact}
0\to \mathrm{Pic}(\Delta) \xrightarrow{\nu} \bigoplus_i \mathrm{Pic}(D_i) \xrightarrow{\mu} \bigoplus_{i<j} \mathrm{Pic}(D_i\cap D_j)
\end{equation}
where $\nu$ is the restriction, and $\mu((\mathscr{L}_i)_i) = ((\mathscr{L}_i|_{D_i\cap D_j}\otimes\mathscr{L}_j|_{D_i\cap D_j}^{\vee})_{i,j})$.
By the classification of log smooth log del Pezzo surfaces \cite[3]{M83}, $\mathrm{Pic}(D_i)$ is isomorphic either to $\mathbb{Z}$, or to $\mathbb{Z}^2$. Note that $\Delta$ has no more than $3$ components, and $D_i\cap D_j\simeq \mathbb{P}^1$ for $i\neq j$, which follows either from the classification, or from \cite[3.3]{Lo22}. By Corollary \ref{prop-pic-injective}, we see that $\mathrm{Pic}(X)\to\mathrm{Pic}(\Delta)$ is injective. 
Analysing all possible cases using the classification of log smooth log del Pezzo pairs with integral boundary \cite[3]{M83}, we see that the rank of $\mathrm{Pic}(\Delta)$ is $\leq 3$ in each case, so the claim follows.
\end{proof}

\section{Classification}
\label{sect-classification}
We discuss the classification of three-dimensional log smooth log Fano pairs $(X, \Delta)$ with integral $\Delta\neq 0$ given in \cite{M83}. The approach of Maeda is based on the fact that there exists an extremal $K_X$-negative contraction $f\colon X\to X_1$ that satisfies the following property: for any contracted curve $C$ one has $(-K_X)\cdot C\geq -2$. Then, Maeda classifies pairs $(X, \Delta)$ according to the type of $f$. Note that one variety $X$ may admit several such contractions, hence it may appear in the classification several times: the lists given in \cite{M83} do not contain information on how to distinguish different log Fano pairs.

We propose a natural way to distinguish them. It is based on Maeda's ideas and a certain kind of MMP developed in the works of Casagrande \cite[2.1]{Ca12} and Fujita \cite{Fu14a}. We show that such contraction can be chosen in an (almost) canonical way: namely, as a first step of the MMP on $K_X$ with scaling on $-K_X-\Delta$. It is called a special MMP. Hence, the type of this contraction, as well as the value of the parameter $\epsilon$ (see Proposition \ref{prop-MMP}), become invariants of the pair $(X, \Delta)$. Thus we are able to distinguish such pairs in terms on these invariants.

We need a result that guarantees the existence of a $1$-complement with some special properties. 

\begin{proposition}[cf. \cite{LM22}, Lemma 5.6]
\label{prop-1-complement}
For a log smooth log Fano pair $(X, \Delta)$ with integral~$\Delta$, there exists a $1$-complement, that is, an integral effective divisor $\Gamma$ such that $K_X+\Delta+\Gamma\sim 0$ and the pair $(X, \Delta + \Gamma)$ is lc. Moreover, we have $\Gamma|_\Delta=\Gamma_{\Delta}$ where $\Gamma_{\Delta}$ is a $1$-complement on $\Delta$.
\end{proposition}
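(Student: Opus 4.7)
The plan is to proceed by induction on $n = \dim X$, constructing a $1$-complement on $\Delta$ and then lifting it to $X$ via Kawamata-Viehweg vanishing. The base case $n=1$ is immediate: $X = \pp^1$, $\Delta$ is a single point (ampleness of $-K_X - \Delta$ forces this), and $\Gamma$ may be taken to be any other point. For the inductive step, each component $D_i$ of $\Delta$ inherits through adjunction the structure of a log smooth log Fano pair $(D_i, \sum_{j\ne i} D_j|_{D_i})$ of dimension $n-1$; if $\Delta$ is irreducible, one instead invokes the existence of $1$-complements on smooth Fano varieties in the relevant dimension (classical in dimensions $\leq 2$ and known in dimension $3$). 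The inductive hypothesis produces a $1$-complement on each component, and the moreover clause, applied in lower dimension, guarantees these can be matched along the intersections $D_i \cap D_j$, so they glue to an effective integral divisor $\Gamma_\Delta$ on the reducible snc scheme $\Delta$ satisfying $K_\Delta + \Gamma_\Delta \sim 0$ and $(\Delta, \Gamma_\Delta)$ slc.

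To lift $\Gamma_\Delta$ to $X$, twist the ideal sheaf sequence of $\Delta \subset X$ by $\oo_X(-K_X - \Delta)$ to obtain
\[
0 \to \oo_X(-K_X - 2\Delta) \to \oo_X(-K_X - \Delta) \to \oo_\Delta(-K_\Delta) \to 0.
\]
Kawamata-Viehweg vanishing applies since $(-K_X - 2\Delta) - K_X = 2(-K_X - \Delta)$ is ample, giving $H^1(X, \oo_X(-K_X - 2\Delta)) = 0$. The restriction on global sections is therefore surjective and provides a lift $\Gamma \in |-K_X - \Delta|$ with $\Gamma|_\Delta = \Gamma_\Delta$; the relation $K_X + \Delta + \Gamma \sim 0$ then holds automatically.

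The remaining and most delicate step is to verify that $(X, \Delta + \Gamma)$ is log canonical. Along $\Delta$ this follows from inversion of adjunction: since $\Delta$ is snc in $X$, the different of $\Delta + \Gamma$ on $\Delta$ equals $\Gamma_\Delta$, which is slc by construction, so $(X, \Delta + \Gamma)$ is lc in a neighbourhood of $\Delta$. Away from $\Delta$, I would exploit the freedom to modify $\Gamma$ by an element of the kernel $H^0(X, \oo_X(-K_X - 2\Delta))$, which leaves $\Gamma|_\Delta = \Gamma_\Delta$ unchanged; a Bertini-type argument applied to this affine coset of sections, viewed as a mobile system on $X \setminus \Delta$, ensures that for a general choice $\Gamma$ is transverse to the stratification off $\Delta$ and hence lc there. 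This transversality argument is the main obstacle of the proof: when the kernel is too small to move $\Gamma$ freely, one instead verifies lc directly using the explicit classification of threefold log smooth log Fano pairs summarised in Table~\hyperlink{table-1}{1}.
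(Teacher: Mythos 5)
Your construction of $\Gamma_\Delta$ by induction and its lift to $X$ via Kawamata--Viehweg vanishing, as well as the use of inversion of adjunction to get log canonicity of $(X,\Delta+\Gamma)$ near $\Delta$, match the paper's argument. The divergence, and the genuine gap, is in how you treat the locus away from $\Delta$. The paper needs no genericity at all: if $(X,\Delta+\Gamma)$ were not lc, then since it is lc near $\Delta$, the non-lc locus is disjoint from $\Delta$; choosing $\lambda>0$ at the lc threshold, the pair $(X,\Delta+(1-\lambda)\Gamma)$ is lc and its non-klt locus contains both $\Delta$ and a center disjoint from $\Delta$, hence has at least two connected components, contradicting the Koll\'ar--Shokurov connectedness theorem because $-K_X-\Delta-(1-\lambda)\Gamma\sim_{\mathbb{Q}}\lambda(-K_X-\Delta)$ is ample. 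So \emph{every} lift of $\Gamma_\Delta$ works, and the proof closes in all dimensions with no case analysis.

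Your substitute for this step does not go through. Moving $\Gamma$ inside the coset $\Gamma+H^0(X,\oo_X(-K_X-2\Delta))$ and invoking Bertini only controls a general member away from the base locus of that auxiliary system; when $h^0(-K_X-2\Delta)$ is small (possibly zero, in which case $\Gamma$ is rigid), or when the base locus off $\Delta$ is positive-dimensional, you have no control on the singularities of $\Delta+\Gamma$ there, and transversality to a stratification is in any case weaker than what is needed (lc allows, e.g., nodal members, but you must rule out worse singularities such as a component of $\Gamma$ with multiplicity, a cusp of high multiplicity, etc., none of which Bertini excludes along the base locus). Your fallback --- checking the remaining cases against Table 1 --- is not available in the required generality: the proposition is stated (and used, via the induction itself) for log smooth log Fano pairs of arbitrary dimension and with possibly irreducible boundary, whereas Table 1 lists only threefolds with reducible $\Delta$; this also affects your treatment of the irreducible case, where the paper simply applies the same inductive statement to $(D_1,0)$ rather than appealing to separate results on Fano varieties. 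The missing idea is precisely the connectedness argument, which replaces all of this.
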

\begin{proof}
Put $\Delta=\sum_i D_i$. We prove the proposition by induction. The case $\dim X = 1$ is clear. Assume that $\dim X\geq 2$. Note that the pair $(D_i, \Delta_{D_i})$ is a log smooth log Fano pair where $\Delta_{D_i}=\sum_{j\neq i} D_j|_{D_i}$, see e.g. \cite[3.3]{Lo22}. Moreover, each stratum $\Theta=D_{i_1}\cap \ldots \cap D_{i_{m}}$ of $\Delta$ is a log smooth log Fano manifold with respect to the boundary $\Delta_\Theta = \sum_{j \neq i_1, \ldots, i_{m}} D_j|_{\Theta}$.  
By induction, there exists a $1$-complement $\Gamma_{D_i}$ on each component $D_i$ such that the pair $(D_i, \Delta_{D_i}+\Gamma_{D_i})$ is lc, and $\Gamma_{D_i}|_{\Delta_{D_i}} = \Gamma_{\Delta_{D_i}}$. Moreover, we may assume that $\Gamma_{D_i}|_{D_i\cap D_j}=\Gamma_{D_j}|_{D_i\cap D_j}=\Gamma_{D_i\cap D_j}$ for any $i\neq j$. Using the exact sequence \eqref{picard-exact}, wee see that a $1$-complement $\Gamma_{\Delta}$ of $K_{\Delta}$ is defined on $\Delta$. By Kawamata-Viehweg vanishing, it can be lifted to an integral divisor $\Gamma$ on $X$ such that of $K_X + \Delta + \Gamma\sim0$. 

By inversion of adjunction the pair $(X, \Delta + \Gamma)$ is lc near $\Delta$. Then, if it is not lc, for some $\lambda>0$ the pair $(X, \Delta + (1-\lambda)\Gamma)$ is lc, and its non-klt locus has $\geq 2$ connected components. This contradicts Koll\'ar-Shokurov connectedness theorem since
$
- K_X - \Delta - (1-\lambda)\Gamma\sim_{\mathbb{Q}} \lambda (-K_X-\Delta)
$
is ample.
\end{proof}

\subsection{Special MMP} 
Now, let $(X, \Delta)$ be a log smooth log Fano pair (of any dimension) with integral $\Delta\neq 0$. Let $\Gamma\sim-K_X-\Delta$ be a $1$-complement as in Proposition \ref{prop-1-complement}. Note that a log Fano pair $(X, \Delta)$ is a Mori dream space by \cite[1.3.2]{BCHM10}. We describe a $K_X$-MMP with scaling on~$\Gamma$. 

\begin{proposition}[{cf. \cite[2.1]{Ca12}, \cite[Section $3$]{Fu14a}}]
\label{prop-MMP}
The $K_X$-MMP with scaling on $\Gamma$ is also a $-\Delta$-MMP and $-\Gamma$-MMP. In particular, it terminates on a Mori fiber space. In dimension $3$, each step of this MMP is a contraction of $\mathbb{P}^2$ to a smooth point.
\end{proposition}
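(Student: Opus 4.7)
The plan is to analyze a single step of the MMP using the ampleness of $\Gamma\sim -K_X-\Delta$, and then to iterate. Since $(X,\Delta)$ is klt log Fano, $X$ is a Mori dream space by \cite[1.3.2]{BCHM10}, so the MMP with scaling on $\Gamma$ exists and terminates. I would denote by $\lambda_i$ the nef threshold at the $i$-th step,
\[
\lambda_i = \inf\{\,t>0 \,:\, K_{X_i}+t\,\Gamma_i \text{ is nef}\,\},
\]
where $\Delta_i,\Gamma_i$ are the strict transforms on $X_i$ and $(X_0,\Delta_0,\Gamma_0)=(X,\Delta,\Gamma)$.

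The key observation is that $\lambda_i>1$ at every step: the class $K_{X_i}+\Gamma_i\sim -\Delta_i$ is antieffective and nonzero, hence not nef in dimension $\geq 2$. For the contracted extremal ray $R_i$ and a generator $\ell$, the relations $(K_{X_i}+\lambda_i\Gamma_i)\cdot\ell=0$ and $K_{X_i}+\Delta_i+\Gamma_i\sim 0$ yield
\[
-K_{X_i}\cdot\ell = \lambda_i\,(\Gamma_i\cdot\ell), \qquad \Delta_i\cdot\ell=(\lambda_i-1)(\Gamma_i\cdot\ell).
\]
Ampleness of $\Gamma_i$, which I would preserve inductively, gives $\Gamma_i\cdot\ell>0$, and then $\lambda_i>1$ forces $K_{X_i}\cdot\ell<0$, $\Delta_i\cdot\ell>0$, $\Gamma_i\cdot\ell>0$. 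Hence each step is simultaneously a step of the $K_{X_i}$-MMP, the $-\Delta_i$-MMP, and the $-\Gamma_i$-MMP. Termination on a Mori fiber space is automatic, since $-K_X=\Delta+\Gamma$ is big (sum of effective and ample), so $K_X$ is not pseudo-effective and no minimal model is reached.

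For the three-dimensional claim I would invoke Mori's classification of $K$-negative extremal contractions on a smooth threefold. Since $\Gamma_i$ is Cartier and $\ell$ is an integral curve, $\Gamma_i\cdot\ell\geq 1$, hence the length $-K_{X_i}\cdot\ell\geq\lambda_i>1$, which by integrality forces $-K_{X_i}\cdot\ell\geq 2$. Among the divisorial contractions from a smooth threefold, only type $E_1$ has length $\geq 2$: the blow-down of a $\mathbb{P}^2$ with normal bundle $\oo(-1)$ to a smooth point. (A direct computation shows $E_2,\dots,E_5$ all have length $1$.) Thus every birational step in our MMP is of this type, and the process ends in a fiber type contraction (Mori fiber space).

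The main obstacle is justifying the iteration: after each $E_1$ contraction $f_i\colon X_i\to X_{i+1}$ I must check that $(X_{i+1},\Delta_{i+1})$ is again log smooth log Fano and that $\Gamma_{i+1}:=(f_i)_*\Gamma_i$ is again ample, so the one-step analysis reruns verbatim. For this I would appeal to the special MMP framework for log smooth log Fano pairs developed by Casagrande \cite[2.1]{Ca12} and Fujita \cite[Section 3]{Fu14a}: these sources show that the exceptional $\mathbb{P}^2$ is either a component of $\Delta_i$ or meets the boundary transversally along an SNC configuration, so that the image pair $(X_{i+1},\Delta_{i+1})$ is again log smooth; ampleness of $\Gamma_{i+1}$ then follows by pushing forward the scaling relation $K_{X_i}+\Delta_i+\Gamma_i\sim 0$ together with the fact that $-K_{X_{i+1}}-\Delta_{i+1}$ remains ample (the contracted ray being both $-\Delta$- and $-\Gamma$-negative prevents the new anticanonical from losing positivity).
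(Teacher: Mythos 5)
Your one-step analysis is the same as the paper's: the nef threshold satisfies $\lambda_i>1$ because $K_{X_i}+\Gamma_i\sim-\Delta_i$ is anti-effective and nonzero, the relations $-K_{X_i}\cdot\ell=\lambda_i\,\Gamma_i\cdot\ell$ and $\Delta_i\cdot\ell=(\lambda_i-1)\Gamma_i\cdot\ell$ give that the step is a $-\Delta$- and $-\Gamma$-MMP step, and integrality plus the length bound $-K_{X_i}\cdot\ell\geq 2$ together with Mori's classification on a smooth threefold yields the blow-down of $\mathbb{P}^2$ with normal bundle $\mathcal{O}(-1)$ to a smooth point (note you call this type $E_1$; in the usual labelling $E_1$ is the blow-up of a curve, of length $1$, and the length-$2$ divisorial contraction is $E_2$ — harmless, but worth fixing). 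The genuine gap is in how you propose to iterate. You make $\Gamma_i\cdot\ell>0$ depend on an inductive hypothesis that $\Gamma_i$ stays ample (equivalently that $(X_{i+1},\Delta_{i+1})$ stays log Fano, and even log smooth), you flag this as the main obstacle, and your proposed justification is circular: since $\Gamma_{i+1}\sim-K_{X_{i+1}}-\Delta_{i+1}$, saying that "ampleness of $\Gamma_{i+1}$ follows from the fact that $-K_{X_{i+1}}-\Delta_{i+1}$ remains ample" is just restating the claim. Neither \cite{Ca12} nor \cite{Fu14a} hands you ampleness or log smoothness of the intermediate pairs in this form, and no such statement is proved in the paper either.

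Fortunately none of this is needed, and this is exactly where the paper's proof is leaner. In the $K_X$-MMP with scaling the contracted ray $R_i$ is $K_{X_i}$-negative by construction and satisfies $(K_{X_i}+\lambda_i\Gamma_i)\cdot R_i=0$ with $\lambda_i>0$, so $\Gamma_i\cdot R_i=-K_{X_i}\cdot R_i/\lambda_i>0$ automatically; ampleness of $\Gamma_i$ is used only at the very first step (where it holds since $\Gamma_0\sim-K_{X_0}-\Delta_0$), and never afterwards. The only data you must carry along are: $X_i$ smooth (preserved because each birational step contracts a $\mathbb{P}^2$ to a smooth point), and $\Delta_i,\Gamma_i$ nonzero integral effective divisors with $K_{X_i}+\Delta_i+\Gamma_i\sim 0$ (the linear relation is preserved under pushforward, and the strict transforms are nonzero precisely because the previous step was $-\Delta$- and $-\Gamma$-positive, so neither divisor can be the contracted exceptional divisor, which is negative on the ray). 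With $\Delta_i\neq 0$ you again get $\lambda_i>1$, hence $\Delta_i\cdot R_i>0$, and with both $\Delta_i\cdot R_i\geq 1$ and $\Gamma_i\cdot R_i\geq 1$ you recover the length bound $-K_{X_i}\cdot R_i\geq 2$ at every step. Replacing your inductive ampleness/log-smoothness hypothesis by this bookkeeping closes the gap and reproduces the paper's argument.
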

\begin{proof}
Put $X_0=X$, $\Delta_0=\Delta$. We run $K_{X_0}$-MMP with scaling on $\Gamma_0=-K_{X_0}-\Delta_0$. Note that $K_{X_0}$ is not nef.  Indeed, otherwise $K_{X_0} + \Gamma_0 \sim -\Delta_0\neq 0$ is nef which is absurd. Pick minimal $\epsilon_0>0$ such that $K_{X_0} + \epsilon \Gamma_0$ is nef. According to Mori theory, $\epsilon_0$ is rational and there exists an extremal ray $R_0$ with the property 
$
(K_{X_0} + \epsilon_0 \Gamma_0)\cdot R_0 = 0.
$ 
Let $f_0\colon X_0\to Z_1$ be the contraction associated to $R_0$. If it is of fiber type, we stop. Assume that it is birational. If it is divisorial, put $X_1=Z_1$, and $g_0=f_0$. If it is small, put $X_1 = X_0^+$ where $X_0\to Z_1\leftarrow X_0^+$ is a flip, and put $g_0\colon X_0\dashrightarrow X_1$. 

Since $K_{X_0}$ is not nef and $K_{X_0} + \Gamma_0 \sim -\Delta_0$ is not nef as well, we have $\epsilon_0>1$ since $K_{X_0} + \epsilon \Gamma_0$ is nef for $\epsilon\gg0$. We have
$
((1-\epsilon_0)K_{X_0} - \epsilon_0 \Delta_0)\cdot R_0 = 0,
$
and since $\epsilon_0-1>0$, $K_{X_0}\cdot R_0<0$, it follows that $\Delta_0\cdot R_0>0$. Hence, $g_0$ is a step of $-\Delta$-MMP. Moreover, since $\Gamma_0\cdot R_0>0$, we see that $g_0$ is a step of $-\Gamma$-MMP. Also, we have 
\begin{equation}
\label{prop-mmp-length}
(-K_{X_0})\cdot R_0 = \Gamma_0\cdot R_0 + \Delta_0\cdot R_0\geq 2.
\end{equation}

Put $\Delta_{1}=(g_0)_*\Delta_0$, $\Gamma_{1}=(g_0)_*\Gamma_0$. Note that these divisors are non-zero since $g_0$ is a step of $-\Delta$-MMP and $-\Gamma$-MMP.
One checks that $K_{X_{1}} + \epsilon_0 \Gamma_{1}$ is nef, but  $K_{X_{1}}$ is not nef (otherwise $K_{X_{1}}+\Delta_1\sim -\Gamma_1\neq 0$ would be pseudo-effective which is absurd). Set $\epsilon_{1}$ to be the minimal number such that $K_{X_{1}} + \epsilon_{1} \Gamma_{1}$ is nef. We have $0<\epsilon_{1}\leq \epsilon_0$, and $\epsilon_1\in \mathbb{Q}$. Note that $K_{X_{1}} + \epsilon_{1} \Gamma_{1}$ is nef but not ample. Hence there exists an extremal ray $R_1$ such that 
$
(K_{X_{1}} + \epsilon_{1} \Gamma_{1})\cdot R_{1}=0.
$
Similarly to the above, we define $g_1\colon X_1\to X_2$ which is also a step of $-\Delta$-MMP and $-\Gamma$-MMP. Iterating, we get a sequence of maps
\[
X = X_0 \dashrightarrow X_1 \dashrightarrow \ldots \dashrightarrow X_k \to Z
\]
where $g_i\colon X_{i}\dashrightarrow X_{i+1}$ for $0\leq i\leq k-1$ is birational, and $\pi \colon X_k\to Z$ is of fiber type. 

In dimension $3$, by \eqref{prop-mmp-length} we have $(-K_{X_0})\cdot R_0\geq 2$, and since $X_0$ is smooth, by the classification of $K_X$-negative contractions on smooth threefolds, we get that $g_0$ is a contraction of $\mathbb{P}^2$ to a smooth point. Arguing analogously and using that $\Gamma_i$ is an integral non-zero divisor for each $i$, we see that each $g_i$ for $0\leq i\leq k-1$ is also a contraction of $\mathbb{P}^2$ to a smooth point.
\end{proof}

\begin{remark}
\label{rem-nef-value}
In the literature (see e.g. \cite{BS97}), the value $\epsilon=\epsilon_0$ is sometimes called the \emph{nef value}, and the  morphism given by a multiple of the linear system $|K_X+\epsilon L|$ is called the \emph{nef value morphism}. Note that a nef value morphism needs not to be extremal. However, if it is extremal, then its type becomes an invariant of the pair $(X, \Delta)$.
\end{remark}

\begin{corollary}[{cf. \cite[Section 5.1]{M83}}]
\label{X-classification}
Assume that $(X, \Delta)$ is a $3$-dimensional log smooth log Fano pair such that $\Delta$ is integral. Assume further that $\Delta$ is reducible. Then there are the following possibilities for $X$:
\begin{enumerate}
\item
the blow up of a quadric $Q$ in a point, see case \hyperref[sec-blow-up-of-quadric]{E1} in Table \hyperlink{table-1}{$1$},
\item
it admits the structure of an extremal $\mathbb{P}^2$-bundle, see cases \hyperlink{compute-p2-bundle-cases-remark}{D1}--\hyperlink{compute-p2-bundle-cases-viii}{D8} in Table \hyperlink{table-1}{$1$},
\item
it admits the structure of an extremal quadric bundle, see case \hyperlink{compute-quadric-bundle-case-1}{Q1} in Table \hyperlink{table-1}{$1$},
\item 
it admits the structure of an extremal $\mathbb{P}^1$-bundle, see cases \hyperlink{compute-p1-bundle-cases-remark}{C1}--\hyperlink{compute-p1-bundle-cases-9}{C10} in Table \hyperlink{table-1}{$1$},
\item
the blow up of a point $p$ on a $\mathbb{P}^2$-bundle of the form $\mathbb{P}_{\mathbb{P}^1}(\oo\oplus\oo\oplus\oo(n))$ for $n\geq 1$, see case \hyperref[sec-blow-up-of-p2-bundle]{E2} in Table \hyperlink{table-1}{$1$},
\item
\label{X-is-Fano}
a Fano threefold with $\rho(X)=1$ and $i(X)\geq 3$, that is, a quadric $Q$ or a projective space $\mathbb{P}^3$, see cases \hyperref[subsec-p3]{F1}--\hyperref[subsec-quadric]{F4} in Table \hyperlink{table-1}{$1$}.
\end{enumerate}
\end{corollary}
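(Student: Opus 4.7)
The plan is to apply Proposition \ref{prop-MMP} to $(X,\Delta)$, producing a sequence
$$X = X_0 \dashrightarrow X_1 \dashrightarrow \cdots \dashrightarrow X_k \xrightarrow{\pi} Z$$
in which every birational step $g_i\colon X_i \dashrightarrow X_{i+1}$ is a contraction of a divisor $E_i \cong \mathbb{P}^2$ to a smooth point, and $\pi$ is an extremal Mori fiber space. I would organise the proof by the number $k$ of birational steps.

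I would start with the case $k=0$, where $X=X_k$ is itself a Mori fiber space $X\to Z$. If $\dim Z = 2$ one gets the extremal $\mathbb{P}^1$-bundle of case (4). If $\dim Z = 1$, then, using \eqref{prop-mmp-length} for the relative ray, a general fiber $F$ is a smooth Fano surface on which $-K_X$ has degree at least $2$ on the spanning class; combined with the extremality of $\pi$ this forces $F \cong \mathbb{P}^2$ or $F$ a smooth quadric, yielding cases (2) and (3). If $\dim Z = 0$ then $\rho(X)=1$, and, writing $\mathrm{Pic}(X)=\mathbb{Z}\cdot H$ with $H$ ample, the reducibility of $\Delta$ together with the integrality of its components forces the class of $\Delta$ to be at least $2H$; combined with the ampleness of $-K_X-\Delta$ this yields $-K_X\sim iH$ with $i\geq 3$, so that $X\cong\mathbb{P}^3$ or $X\cong Q$ and we are in case (6).

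Next I would treat $k\geq 1$. Here $g_0\colon X\to X_1$ contracts a divisor $E\cong\mathbb{P}^2$ to a smooth point $p\in X_1$, and $(X_1,\Delta_1=(g_0)_*\Delta)$ is again a log smooth log Fano pair with integral $\Delta_1$ by Proposition \ref{prop-MMP}. The key intermediate assertion, which I would establish first, is that when $\Delta$ is reducible at most one step of this MMP is birational (this is the second half of Theorem \ref{main-thm2}). The strategy is to use log smoothness: each component of $\Delta$ meets $E$ either trivially or in a line of $E\cong\mathbb{P}^2$, and the positivity $\Delta_0\cdot R_0>0$ combined with $E|_E=\mathcal{O}(-1)$ constrains the incidence combinatorially. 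Iterating the analysis on a hypothetical second birational step $g_1$ of the MMP forces $\Delta_2$ to fail the $-\Delta_2$-positivity on the corresponding extremal ray, yielding the claim.

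Once this claim is in hand, I would reduce to classifying blow ups $X\to X_1$ at smooth points on Mori fiber spaces $(X_1,\Delta_1)$ of the types enumerated above, subject to the constraint that the resulting $(X,\Delta)$ is still log smooth log Fano with reducible $\Delta$. A lengthy but routine inspection shows that the only surviving possibilities are the blow up of a quadric $Q\subset\mathbb{P}^4$ at a point, which is case (1), and the blow up at a distinguished point of a $\mathbb{P}^2$-bundle $\mathbb{P}_{\mathbb{P}^1}(\mathcal{O}\oplus\mathcal{O}\oplus\mathcal{O}(n))$ with $n\geq 1$, which is case (5). The main obstacle is precisely this case-by-case verification, in which one must rule out those blow ups that destroy log smoothness, the log Fano condition, or the reducibility of $\Delta$; this is the step where Maeda's classification from \cite{M83} (and the corrections to it made in this paper) is applied in full.
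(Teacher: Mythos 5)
Your handling of the fiber-type alternative (the case where the special MMP terminates immediately) agrees with the paper's: $\dim Z=2$ gives the extremal $\mathbb{P}^1$-bundle, the length bound \eqref{prop-mmp-length} rules out lines in the fibers of a del Pezzo fibration so the general fiber is $\mathbb{P}^2$ or a quadric, and for $\dim Z=0$ reducibility of the integral boundary forces $i(X)\geq 3$. The gaps are in your treatment of the birational case. First, the combinatorial input you want to use is simply false: a component of $\Delta$ different from $E$ need not meet $E\simeq\mathbb{P}^2$ in a line. In the case that actually occurs (case E1, $X=\mathrm{Bl}_pQ$), the component $D_2$, the strict transform of $T_pQ\cap Q$, meets $E$ in a smooth conic. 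So the incidence argument you sketch for the intermediate claim that at most one step is birational does not get started; note that in the paper this claim is not an input to the corollary but a byproduct of the classification, and no independent proof of it is offered or needed.

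Second, and more seriously, your reduction assumes that the contracted pair $(X_1,\Delta_1=(g_0)_*\Delta)$ is again one of the log smooth, reducible-boundary Mori fiber space pairs enumerated in your $k=0$ step. This fails exactly in case E1: there $E$ is itself a component of $\Delta$, so $\Delta_1=T_pQ\cap Q$ is irreducible and singular (a quadric cone, $\simeq\mathbb{P}(1,1,2)$), hence $(Q,\Delta_1)$ is neither log smooth nor has reducible boundary and does not occur in your list. Followed literally, your plan would therefore miss conclusion (1) of the corollary. What is needed at this point is not a "routine inspection" of blow ups of the previously enumerated pairs, but the classification of three-dimensional log smooth log Fano pairs whose first contraction is birational, i.e.\ Maeda's Section 5.1; this is precisely what the paper's proof invokes directly, listing its four cases (with $E$ a boundary component or not) and observing that two of them also carry extremal $\mathbb{P}^1$-bundle structures and are absorbed into cases C2 and C3, leaving E1 and E2 as the genuinely new possibilities. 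So either you must strengthen your step (a) and redo the downstairs analysis without the log smoothness/reducibility assumptions, or you should, as the paper does, quote Maeda's birational-case classification outright.
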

\begin{proof}
The proof relies on the classification. First assume that the $K_X$-MMP as in Proposition \ref{prop-MMP} does not terminate on the first step. Then $g_1\colon X\to X_1$ is the blow down of a projective plane to a smooth point on $X_1$. Denote by $E\simeq \mathbb{P}^2$ the $g_1$-exceptional divisor. According to Maeda's classification, see section 5.1 in \cite{M83}, there are the following possibilities: 
\begin{enumerate}[leftmargin=*, label*=\arabic*.]
\item
Case 5.1(i) as in \cite{M83}, $E$ is a component of $\Delta=D_1+D_2$, where $D_1 = E \simeq \mathbb{P}^2$ and $D_2\simeq \mathbb{F}_2$. Here $g_1\colon X\to X_1$ is the blow up of a quadric $X_1$ in a point. 
This case is treated in section \ref{sec-blow-up-of-quadric}.
\item
Case 5.1(ii), $E$ is a component of $\Delta=D_1+D_2+D_3$ where $D_1 = E \simeq \mathbb{P}^2$, $D_2\simeq D_3\simeq \mathbb{F}_1$. Here $X$ is the blow up of a point on $\mathbb{P}^3$. One checks that in this case the nef value is equal to $2$, and the nef value morphism is a contraction to a point, see Example 4.6 in \cite{LM22}. Hence, the first step as the special MMP is not uniquely determined: it can be either the blow down of $E$, or a $\mathbb{P}^1$-bundle over $\mathbb{P}^2$. We treat it as a special case of a more general case of $\mathbb{P}^1$-bundles, see \hyperlink{compute-p1-bundle-cases-ii}{C2}. 
\item
Case 5.1(iv), $E$ is not a component of $\Delta=D_1+D_2$ where $D_1\simeq \mathbb{F}_1$, and $D_2\simeq \mathbb{P}^2$. Moreover, $E\cap D_1$ is a $(-1)$-curve. Here $X$ is the blow up of $\mathbb{P}^3$ in a point, $D'_1$ is a plane that passes through the blown up point, and $D'_2$ is a plane that does not intersect it. Again, $X$ admits an extremal $\mathbb{P}^1$-bundle structure, so we to treat this case as a special case of \hyperlink{compute-p1-bundle-cases-iii}{C3}. 
\item
\label{X-is-Fano-blow-up-of-P2-bundle}
Case 5.1(iv), $E$ is not a component of $\Delta=D_1+D_2$, where $D_1\simeq\mathbb{F}_1$, and $D_2\simeq \mathbb{P}^1\times\mathbb{P}^1$. Here $X$ is the blow up of a point on a $\mathbb{P}^2$-bundle $Y$ of the following form: 
\[
Y=\mathbb{P}_{\mathbb{P}^1}(\oo\oplus\oo\oplus\oo(n)), \ \ \ \ \ n>0, \ \ \ \ \ \Delta_Y=D'_1+D'_2, \ \ \ \ \ D'_1\sim F, \ \ \ \ \ D'_2\sim H-n F 
\]
where $F$ is a fiber of the projection $Y\to \mathbb{P}^1$, and $H$ is the tautological divisor. We treat this case in section \ref{sec-blow-up-of-p2-bundle}.
\end{enumerate}

Now we assume that the MMP terminates on the first step. Hence, there exists a Mori fiber space structure $\pi\colon X\to Z$. Then, there are the following possibilities:
\begin{enumerate}[leftmargin=*, label*=\arabic*.]
\item
Case 5.4 as in \cite{M83}, $\dim Z = 0$. Then $X$ is a smooth Fano threefold with $\rho(X)=1$. Note that for the Fano index of $X$ we have $i(X)\geq 2$, because $-K_X-\Delta$ is ample. Since by assumption $\Delta$ is reducible, we have $i(X)\geq 3$, so $X$ is either a quadric, or a projective space $\mathbb{P}^3$. This case is treated in section \ref{sec-fano}.
\item
Case 5.3, $\dim Z = 1$. Then $Z=\mathbb{P}^1$, and $\pi\colon X\to Z$ is a del Pezzo fibration. Note that the general fiber has index $\geq 2$, hence it is isomorphic either to a projective plane $\mathbb{P}^2$, or to a quadric $\mathbb{P}^1\times \mathbb{P}^1$. We consider these cases in sections \ref{section-p2-bundles} and \ref{section-quadric-bundles}, respectively. 
\item
Case 5.2, $\dim Z = 2$. Then $X\to Z$ is a $\mathbb{P}^1$-bundle. We consider this case in section \ref{section-p1-bundles}.
\end{enumerate}
\end{proof}

\section{K-stability}
\label{sect-k-stability}
In this section, we recall the main definitions and theorems from the theory of K-stability. For a detailed survey, see \cite{Xu21}. 
\begin{definition}
Let $(X, D)$ be a pair, and let $f\colon Y\to X$ be a proper birational morphism from a normal variety $Y$. For a prime divisor $E$ on $Y$, a \emph{log discrepancy} of $E$ with respect to the pair $(X, D)$ is defined as
\[
A_{(X, D)}(E) = 1 + \mathrm{coeff}_E \Big( K_Y - f^*(K_X + D) \Big).
\]
\end{definition}

Put $L=-K_X-D$. By a \emph{pseudo-effective threshold} of $E$ with respect to a log Fano pair $(X, D)$ we mean the number
\[
\tau_{(X, D)}(E) = \sup\{ x\in\mathbb{R}_{\geq0}\colon f^*L - xE\ \text{is pseudo-effective}\}.
\]

Similarly, we define the \emph{nef threshold} of $E$ with respect to a log Fano pair $(X, D)$:
\[
\epsilon_{(X, D)}(E) = \sup\{ x\in\mathbb{R}_{\geq0}\colon f^*L - xE\ \text{is nef}\}.
\]

The \emph{expected vanishing order} of $E$ with respect to a log Fano pair $(X, D)$ is 
\[
S_{(X, D)}(E) = \frac{1}{\mathrm{vol}(L)} \int_{0}^{\infty}{\mathrm{vol}\Big( f^*L - xE \Big)dx},
\]
where $\mathrm{vol}$ is the volume function, see \cite{Laz04}. We will also use the following notation: $S'_{(X, D)}(E)={\mathrm{vol}(L)}\cdot S_{(X, D)}(E)$. The \emph{beta-invariant} $\beta_{(X, D)}(E)$ of $E$ with respect to a log Fano pair $(X, D)$ is defined as follows:
\[
\beta_{(X, D)}(E) = A_{(X, D)}(E) - S_{(X, D)}(E).
\]

We also use the following notation:
\[
\beta'_{(X, D)}(E) = \mathrm{vol}(L)\cdot \beta_{(X, D)}(E) = \mathrm{vol}(L)\cdot A_{(X, D)}(E) - \int_{0}^{\infty}{\mathrm{vol}\Big( f^*L-tE \Big)dx}.
\]

Recall that the $\delta$-invariant of $E$ with respect to a log Fano pair $(X, D)$ (resp., $\delta$-invariant of $E$ along the subvariety $Z$ with respect to a log Fano pair $(X, D)$) are defined as
\[
\delta(X, D) = \inf_{E/X} \frac{A_{(X, D)}(E)}{S_{(X, D)}(E)}, \ \ \ \ \delta_Z(X, D) = \inf_{E/X,\ Z\subset C_X(E)} \frac{A_{(X, D)}(E)}{S_{(X, D)}(E)}
\]
where $E$ runs through all prime divisors over $X$ (resp., $E$ runs through all prime divisors over $X$ whose center contains $Z$). We formulate the main definitions.

\begin{definition}[\cite{Li17}, \cite{Fu19}, \cite{Fu16a}]
\label{def-k-stability}
A klt log Fano pair $(X, D)$ is called
\begin{enumerate}
\item
\emph{divisorially semistable} (resp., \emph{divisorially stable}), if $\beta_{(X, D)}(E)\geq0$ (resp., $\beta_{(X, D)}(E)>0$) for any prime divisor $E$ on $X$. We say that $X$ is \emph{divisorially unstable} if it is not divisorially semistable,
\item
\emph{K-semistable} (resp., \emph{K-stable}) if $\beta_{(X, D)}(E)\geq0$ (resp., $\beta_{(X, D)}(E)>0$) for any prime divisor $E$ over $X$. We say that $X$ is \emph{K-unstable} if it is not K-semistable,
\item
\emph{uniformly K-stable} if $\delta(X, D)>1$.
\end{enumerate}
\end{definition}

This definition is equivalent the original definition of K-stability given in \cite{Tia97}, \cite{Don02} in terms of test configurations. We refer to \cite[2.7]{Fu19} for the definition of K-polystability. 

In what follows, for convenience we will compute $\beta'_{(X, D)}$ instead of $\beta_{(X, D)}$ to characterise K-stability. To analyse K-semistability inductively, we need the Abban-Zhuang theory developed in \cite{AZ20}. This theory was applied to get explicit results in the case of Fano threefolds in \cite[1.7]{ACCFKMSSV21}. We need these results in a slightly more general form as we work with log Fano pairs.

 
\begin{proposition}[cf. {\cite[1.7.26]{ACCFKMSSV21}}]
\label{prop-formulas-for-beta}
Let $(X, D)$ be a $3$-dimensional log Fano pair of Maeda type that satisfies $\overline{\mathrm{Mov}}(X)=\mathrm{Nef}(X)$. Let $Y$ be an irreducible normal surface in $X$, let $Z$ be an irreducible curve in $Y$, and let $E$ be a prime divisor over $X$ such that $C_X(E) = Z$. Put $L=-K_X-D$. Then
\begin{equation}
\label{eq-min}
\frac{A_{(X, D)}(E)}{S_{(X, D)}(E)}\geq \mathrm{min} \Bigg\{ \frac{A_{(X, D)}(Y)}{S_{(X, D)}(Y)}, \frac{A_{(Y, D_Y)}(Z)}{S(W^Y_{\bullet, \bullet}; Z)} \Bigg\}
\end{equation}
with
\begin{equation}
\label{eq-min2}
S(W^Y_{\bullet, \bullet}; Z) = \frac{3}{L^3}\int_0^{\infty}{\big(P(u)^2\cdot Y\big)\cdot \mathrm{ord}\Big(N(u)|_Y\Big) du} + 
\frac{3}{L^3} \int_0^{\infty}\int_0^{\infty}{\mathrm{vol}\Big(P(u)|_Y - v Z\Big)dv du},
\end{equation}
where $P(u)$ is the positive part of the Zariski decomposition of $L-uY$, and $N(u)$ is its negative part. 
\end{proposition}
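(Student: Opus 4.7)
The plan is to deduce this from the general Abban-Zhuang adjunction estimate \cite{AZ20} specialized to a two-step flag on a threefold. For any prime divisor $E$ over $X$ whose center is contained in $Y$ and whose trace on $Y$ has center $Z$, the refinement of the complete linear series $|L|$ through the flag $X\supset Y\supset Z$ produces multi-graded linear series $W^Y_{\bullet, \bullet}$ on $Y$, and the Abban-Zhuang theorem gives the bound
\[
\frac{A_{(X, D)}(E)}{S_{(X, D)}(E)}\geq \min\Bigl\{ \frac{A_{(X, D)}(Y)}{S_{(X, D)}(Y)},\ \frac{A_{(Y, D_Y)}(Z)}{S(W^Y_{\bullet, \bullet}; Z)} \Bigr\},
\]
where $D_Y$ is obtained by adjunction from $D$ along $Y$. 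This is the inequality \eqref{eq-min}, and the only remaining task is to give the closed-form expression \eqref{eq-min2} for $S(W^Y_{\bullet, \bullet}; Z)$.

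To obtain \eqref{eq-min2}, I would run the standard Okounkov-body/Zariski decomposition recipe. Write the Zariski decomposition $L-uY=P(u)+N(u)$ on $X$ for $u\in[0,\tau_{(X,D)}(Y)]$. The assumption $\overline{\mathrm{Mov}}(X)=\mathrm{Nef}(X)$ is the key point: it forces $P(u)$ to be nef (not merely movable) throughout the range, so that volumes compute as intersection numbers, in particular $\vol(P(u))=P(u)^3$ and the restricted volume to $Y$ equals $P(u)^2\cdot Y$. With $P(u)$ nef, the restricted graded linear series $W^Y_{\bullet,u}$ splits into a fixed part equal to $N(u)|_Y$ (whose order along $Z$ is precisely $\ord(N(u)|_Y)$ at the generic point of $Z$) and a moving part with volume function $v\mapsto \vol(P(u)|_Y-vZ)$. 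Substituting into the general formula
\[
S(W^Y_{\bullet,\bullet};Z)=\frac{3}{L^3}\int_0^{\infty}\Bigl( (P(u)^2\cdot Y)\cdot \ord_Z(N(u)|_Y) + \int_0^{\infty}\vol\bigl(P(u)|_Y-vZ\bigr)\,dv\Bigr)du
\]
yields \eqref{eq-min2}; the normalization $3/L^3$ is the expected one in dimension three.

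The only nontrivial point, and thus the main obstacle, is verifying the decomposition of $W^Y_{\bullet,u}$ into a moving and a fixed piece with the stated volume and order, i.e.\ checking that one can indeed replace the abstract multi-graded series by the concrete Zariski data of $P(u), N(u)$. This is exactly where $\overline{\mathrm{Mov}}(X)=\mathrm{Nef}(X)$ enters: without it, the positive part $P(u)$ would only be movable and the restricted volume $\vol_{X|Y}(P(u))$ need not equal $P(u)^2\cdot Y$. Under this hypothesis the argument runs identically to \cite[1.7.26]{ACCFKMSSV21}, the only modification being that log discrepancies are taken with respect to $(X,D)$ and $(Y,D_Y)$ rather than $(X,0)$ and $(Y,0)$; the passage from $K_X$ to $K_X+D$ does not affect any of the volume or intersection computations, since $L=-K_X-D$ is treated as a single polarization throughout.
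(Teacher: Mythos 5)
Your proposal is correct and follows essentially the same route as the paper: apply the Abban--Zhuang adjunction theorem to the flag $X\supset Y\supset Z$ (with $D_Y$ defined by adjunction) to get the min-inequality \eqref{eq-min}, and then invoke the explicit formula of \cite[1.7.24/1.7.26]{ACCFKMSSV21}, where the hypothesis $\overline{\mathrm{Mov}}(X)=\mathrm{Nef}(X)$ guarantees the positive part $P(u)$ is nef so the Zariski data give \eqref{eq-min2}. The only difference is that you sketch the derivation of the closed formula rather than simply citing the corollary, which does not change the argument.
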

\begin{proof}
We use the notation as in \cite{AZ20} and \cite[1.7]{ACCFKMSSV21}. Put $V_{\overrightarrow{\bullet}} = V_\bullet = \bigoplus_{m\geq 0}V_m = \bigoplus_{m\geq 0} \mathrm{H}^0(X, \oo_X(-mL))$, hence by definition  
$
\delta_Z(X, D, V_\bullet)=\delta_Z(X, D).
$
Put 
\[
W_{\overrightarrow{\bullet}} = W_{\bullet, \bullet} = \bigoplus_{m\geq 0, j\geq 0} W_{m, j} = \bigoplus_{m\geq 0, j\geq 0} \mathrm{im} \bigg( \mathrm{H}^0 (X, mL-jY) \to \mathrm{H}^0 (Y, mL_Y-jY|_Y) \bigg)
\]
where the map on the right is the natural restriction homomorphism, and $L_Y=L|_Y$. We apply Theorem 3.3 in \cite{AZ20} to an irreducible normal surface $Y\subset X$. Define the pair $(Y, D_Y)$ by adjunction: $K_X + D + (1-\mathrm{coeff}_D Y)Y|_Y = K_Y + D_Y$. Then, according to this theorem, we have
\begin{equation}
\begin{split}
\delta_Z(X, D) \geq \min \bigg\{  \frac{A_{(X, D)}(Y)}{S(V_\bullet, Y)}, \ \delta_Z(Y, D_Y, W_{\overrightarrow{\bullet}}) \bigg\}
= \min \bigg\{  \frac{A_{(X, D)}(Y)}{S_{(X, D)}(Y)}, \ \delta_Z(Y, D_Y, W_{\overrightarrow{\bullet}}) \bigg\}.
\end{split}
\end{equation}

We have $\delta_Z(Y, D_Y, W_\bullet) = \frac{A_{(Y, D_Y)}(Z)}{S_{(Y, D_Y)}(Z)}$, and the formula \eqref{eq-min} follows. To compute $S_{(Y, D_Y)}(Z)$, we apply Corollary 1.7.24 in \cite{ACCFKMSSV21} which uses the assumption $\overline{\mathrm{Mov}}(X)=\mathrm{Nef}(X)$. According to this corollary, we have
\[
S_{(Y, D_Y)}(Z) = \frac{3}{\mathrm{vol}(L)} \int_0^{\infty}{h(u)du}\quad \text{for}\quad h(u) = \big(P(u)^2\cdot Y\big)\cdot \mathrm{ord}\Big(N(u)|_Y\Big) + \int_0^\infty{\mathrm{vol}\Big(P(u)|_Y - v Z\Big)dv},
\]
so the formula \eqref{eq-min2} is proven. This concludes the proof.
\end{proof}

The following theorem characterises two-dimensional K-polystable log Fano pairs of Maeda type.

\begin{theorem}[{\cite[Corollary 1.2]{Fu20a}}]
\label{thm-2-dim}
Let $(X, D)$ be a log del Pezzo pair of Maeda type. Then $(X, D)$ is K-semistable (resp., K-polystable) if and only if $(X, D)$ is isomorphic to 
\begin{itemize}
\item
 $(\mathbb{P}^2, aC)$ with $C$ a smooth conic and $a \leq 3/4$ (resp., $a<3/4$), or
\item
$(\mathbb{P}^1\times \mathbb{P}^1, aC)$ with $C$ the diagonal and $a \leq 1/2$ (resp., $a<1/2$).
\end{itemize}
\end{theorem}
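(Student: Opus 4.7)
The plan is to follow Fujita's original approach via Maeda's classification and explicit $\beta$-invariant computations. First, I would invoke Maeda's classification of two-dimensional log smooth log Fano pairs $(X,\Delta)$ with $\Delta$ integral and nonzero. This produces a short explicit list: the surface $X$ is $\mathbb{P}^2$, $\mathbb{P}^1\times\mathbb{P}^1$, a Hirzebruch surface $\mathbb{F}_n$, or a blowup thereof, and for each $X$ the admissible reduced boundaries $\Delta$ are listed explicitly. Every candidate boundary $D=\sum c_iD_i$ of Maeda type is then parametrized by a coefficient vector subject to ampleness of $L=-K_X-D$.

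Second, I would eliminate most cases by exhibiting a curve $E$ on $X$ with $\beta_{(X,D)}(E)<0$. In dimension two, a prime divisor is a curve, so via the Zariski decomposition of $L-tE$ one computes $A_{(X,D)}(E)=1-\mathrm{coeff}_E(D)$ and $S_{(X,D)}(E)$ as a one-dimensional integral. Typically, when $X$ has Picard rank at least two or $\Delta$ is reducible, one of the components of $\Delta$ or a $(-1)$-curve on $X$ has negative $\beta$ for every choice of admissible coefficients, ruling the case out. This analysis eliminates every pair except $(\mathbb{P}^2,aC)$ with $C$ a smooth conic and $(\mathbb{P}^1\times\mathbb{P}^1,aC)$ with $C$ the diagonal, both of which have irreducible boundary.

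Third, for the two surviving cases I would compute $\beta$ directly. For $(\mathbb{P}^2,aC)$ with $L\equiv(3-2a)H$, the conic has $C\equiv 2H$, so $\mathrm{vol}(L-tC)=(3-2a-2t)^2$ gives $S(C)=(3-2a)/6$, hence
\[
\beta(C)=(1-a)-\tfrac{3-2a}{6}=\tfrac{3-4a}{6},
\]
which is nonnegative exactly when $a\le 3/4$. The analogous computation for $(\mathbb{P}^1\times\mathbb{P}^1,aC)$ yields the threshold $a\le 1/2$. To promote these necessary conditions into sufficient conditions for K-semistability, one verifies $\beta_{(X,D)}(E)\ge0$ for every prime divisor $E$ over $X$. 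This is done by Abban--Zhuang-style induction, which in dimension two is elementary: one restricts to lines or rulings through the center of $E$ and reduces to a one-dimensional computation on $\mathbb{P}^1$.

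The main obstacle is the K-polystability vs K-semistability dichotomy at the boundary values. For strict inequality $a<3/4$ (resp.\ $a<1/2$), I would show uniform K-stability by proving $\delta(X,D)>1$: $\beta(C)$ is strictly positive, and every other test divisor yields a larger ratio $A/S$, bounded uniformly away from $1$; uniform K-stability implies K-polystability. At the boundary values $a=3/4$, $a=1/2$ one has $\beta(C)=0$, but a non-product test configuration — for instance the specialization of the smooth conic to a double line, or of the diagonal to a union of two rulings — produces a strictly semistable degeneration whose central fiber is not isomorphic to $(X,D)$, ruling out polystability while preserving semistability. Handling this degeneration carefully, and distinguishing it from product test configurations coming from the $\mathrm{PGL}_2$-action, is the delicate step of the proof.
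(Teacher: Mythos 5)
The paper itself contains no proof of Theorem \ref{thm-2-dim}: it is quoted verbatim from Fujita \cite{Fu20a}*{Corollary 1.2}, so there is no internal argument to compare against, and your proposal is really an attempt at Fujita's theorem. Your overall route --- Maeda's two-dimensional classification, elimination of most pairs by a divisor with negative $\beta$, and explicit computation for the two survivors --- is the natural one, and your threshold computations are correct: $\beta_{(X,D)}(C)=(3-4a)/6$ for $(\mathbb{P}^2,aC)$ and the analogous computation gives $(1-2a)/3$ for the diagonal on $\mathbb{P}^1\times\mathbb{P}^1$. Two caveats, though: the elimination step is only asserted ``typically'', whereas Maeda's surface list contains the infinite Hirzebruch families, so the destabilizing estimates must be made uniform in $n$ (routine, but real work, of the kind this paper carries out in dimension three); and for sufficiency you must control all divisors \emph{over} $X$, which is most efficiently done not by a bare $\delta>1$ estimate but by Zhuang's equivariant criterion (Theorem \ref{thm-G-invariant}) with the reductive group $\mathrm{PGL}(2,\mathbb{C})$, whose only proper invariant center is $C$ --- exactly the scheme this paper uses in its three-dimensional cases.

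The genuine gap is your treatment of the boundary values. To rule out K-polystability at $a=3/4$ (resp.\ $a=1/2$) you propose degenerating the conic inside a fixed $\mathbb{P}^2$ to a double line, or the diagonal to a union of two rulings, and assert these are strictly semistable non-product degenerations; but you never verify that their Donaldson--Futaki invariants vanish, and for the double line the central fibre $(\mathbb{P}^2,\tfrac{3}{2}L)$ is not even log canonical, so it cannot serve as a semistable degeneration without substantial extra justification (its Futaki invariant has no reason to be zero). The correct and much cleaner argument is already in your hands: at the threshold one has $\beta_{(X,D)}(C)=0$, and since $C$ is a prime divisor on $X$ it is dreamy, hence induces a special test configuration (degeneration to the normal cone of $C$, with central fibre a cone of type $\mathbb{P}(1,1,4)$, resp.\ $\mathbb{P}(1,1,2)$) whose Futaki invariant is a positive multiple of $\beta(C)=0$ and whose central fibre is visibly not isomorphic to $(X,D)$. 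This non-product configuration kills K-polystability, while K-semistability persists because $\beta(E)\ge 0$ for every divisor over $X$ at the threshold value. Replace your ad hoc degenerations by this one and the boundary dichotomy closes; as written, that step would not go through.
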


In any dimension, it is known that if the coefficients of the boundary are sufficiently close to~$1$, then a log smooth log Fano pair is K-unstable.

\begin{theorem}[{\cite[Corollary 1.2]{Fu16b}}]
\label{thm-G-small-values}
Let $X$ be a smooth projective variety and $D$ be a nonzero reduced simple normal crossing divisor on $X$. Assume that $-K_X -(1-\beta)D$ is ample for any $0 < \beta \ll 1$ and the divisor $-K_X - D$ is big. Then $(X, -K_X - (1 - \beta)D)$ is not K-semistable for any $0 <\beta \ll 1$ with $\beta \in \mathbb{Q}$. 
\end{theorem}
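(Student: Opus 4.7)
The plan is to witness divisorial K-instability via a prime divisor \emph{on} $X$ itself, namely any irreducible component $D_i$ of $D$ (which exists since $D\neq 0$). Set $\Delta_\beta := (1-\beta)D$ and $L_\beta := -K_X-\Delta_\beta$; by hypothesis $L_\beta$ is ample for $0<\beta\ll 1$, and the SNC condition on $D$ ensures $(X,\Delta_\beta)$ is klt. I will show that $\beta'_{(X,\Delta_\beta)}(D_i)<0$ for all sufficiently small rational $\beta>0$, whence $(X,\Delta_\beta)$ is not even divisorially semistable, and in particular is not K-semistable by Definition~\ref{def-k-stability}.

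The log discrepancy is immediate: since $D$ is reduced and $D_i$ is a component of $D$, the coefficient of $D_i$ in $\Delta_\beta$ is $1-\beta$, so
\[
A_{(X,\Delta_\beta)}(D_i) = 1-(1-\beta) = \beta,
\]
which tends to $0$ as $\beta\to 0^+$. The crux is to bound $S'_{(X,\Delta_\beta)}(D_i) = \int_0^\infty \mathrm{vol}(L_\beta - xD_i)\,dx$ below by a strictly positive constant, uniformly in small $\beta$. Here the bigness hypothesis on $-K_X-D$ enters decisively: by openness of the big cone in $N^1(X)_{\mathbb{R}}$, there exists $\tau_0>0$ such that $-K_X-D-xD_i$ remains big for all $x\in[0,\tau_0]$, and the continuity of the volume function on the big cone (\cite{Laz04}) yields
\[
c_0 := \int_0^{\tau_0}\mathrm{vol}(-K_X-D-xD_i)\,dx > 0.
\]
Writing $L_\beta - xD_i = (-K_X-D-xD_i) + \beta D$ with $\beta D$ effective, monotonicity of volume under addition of effective divisors gives $\mathrm{vol}(L_\beta - xD_i) \geq \mathrm{vol}(-K_X-D-xD_i)$, so upon integrating, $S'_{(X,\Delta_\beta)}(D_i)\geq c_0$ for every small $\beta>0$.

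Combining the two estimates, and using that $\mathrm{vol}(L_\beta)\to \mathrm{vol}(-K_X-D)$ is finite (so $\mathrm{vol}(L_\beta)\cdot\beta\to 0$) as $\beta\to 0^+$,
\[
\beta'_{(X,\Delta_\beta)}(D_i) = \mathrm{vol}(L_\beta)\cdot\beta - S'_{(X,\Delta_\beta)}(D_i) \;\leq\; \mathrm{vol}(L_\beta)\cdot\beta - c_0 \;\longrightarrow\; -c_0 < 0.
\]
Hence $\beta'_{(X,\Delta_\beta)}(D_i)<0$ for all sufficiently small rational $\beta>0$, proving the theorem. There is no serious obstacle; the only essential input is that the bigness of $-K_X-D$ is precisely what keeps the $S$-invariant bounded below while the log discrepancy $A$ collapses to zero.
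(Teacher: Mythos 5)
Your proposal is correct. The paper does not prove this statement itself (it is quoted from Fujita's \cite{Fu16b}), and your argument is essentially the known one: for a component $D_i$ of $D$ the log discrepancy $A_{(X,(1-\beta)D)}(D_i)=\beta$ collapses to $0$, while bigness of $-K_X-D$ (openness of the big cone plus monotonicity and continuity of the volume) keeps $S'_{(X,(1-\beta)D)}(D_i)$ bounded below by a positive constant uniformly in $\beta$, so $\beta_{(X,(1-\beta)D)}(D_i)<0$ for small rational $\beta$ and the pair is already divisorially unstable, hence K-unstable in the sense of Definition~\ref{def-k-stability}.
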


The following theorem allows to reduce the computations to the $G$-equivariant case.

\begin{theorem}[{\cite[Corollary 4.14]{Zhu20a}}]
\label{thm-G-invariant}
Let $G$ be an algebraic group and let $(X, D)$ be a log Fano pair with a $G$-action. Assume that $A_{(X, D)}(E) \geq S_{(X, D)}(E)$ (resp., $G$ is reductive and $A_{(X, D)}(E) > S_{(X, D)}(E)$) for all $G$-invariant irreducible divisors $E$ over $X$. Then $(X, D)$ is K-semistable (resp. K-polystable).
\end{theorem}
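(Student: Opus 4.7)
The plan is to reduce the $G$-equivariant hypothesis to the ordinary valuative criterion of Fujita--Li, which characterises K-semistability by $\beta_{(X,D)}(E) \geq 0$ for every prime divisor $E$ over $X$. Introduce the $G$-equivariant delta invariant
\[
\delta_G(X,D) = \inf_{E/X,\ G\text{-invariant}} \frac{A_{(X,D)}(E)}{S_{(X,D)}(E)},
\]
so the hypothesis reads $\delta_G(X,D) \geq 1$. The core of the argument is to establish the equality $\delta(X,D) = \delta_G(X,D)$ whenever $G$ is an algebraic group acting on the pair $(X,D)$. Since the inequality $\delta_G \geq \delta$ is immediate from the definitions, the content is the reverse.

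The plan for $\delta \geq \delta_G$ is a degeneration/averaging argument on valuations. Given an arbitrary prime divisor $E$ over $X$, the $G$-action produces a family of valuations $g\cdot v_E$ parameterised by $G$; because $(X,D)$ is $G$-invariant, both $A_{(X,D)}$ and $S_{(X,D)}$ are constant along this family. Associate to $v_E$ its induced $\mathbb{N}$-filtration on the section ring $R = \bigoplus_m H^0(X, -m(K_X+D))$. Using a one-parameter subgroup in $G$ (or, more generally, a Rees-type construction applied to $G$ itself), degenerate this filtration to a $G$-invariant filtration $\mathcal{F}_\infty$. By lower semicontinuity of the log discrepancy and continuity of the expected vanishing order along such degenerations, the induced valuation (or filtration) satisfies $A/S \leq A(E)/S(E)$. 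Approximating $\mathcal{F}_\infty$ by $G$-invariant divisorial valuations (using finite generation coming from the Mori dream space property of log Fano pairs, guaranteed by BCHM10) concludes $\delta_G \leq \delta$.

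For the K-polystable half, the extra ingredient is the reductivity of $G$. If $(X,D)$ is K-semistable but not K-polystable, then by the Li--Wang--Xu theory there is a special test configuration $\mathcal{X}$ with vanishing Donaldson--Futaki invariant whose central fiber $(X_0,D_0)$ is not isomorphic to $(X,D)$. Reductivity of $G$ provides, through a Hilbert--Mumford/Luna-type argument, a $G$-equivariant deformation of $\mathcal{X}$ to a $G$-equivariant special test configuration with the same Futaki invariant and non-product central fiber. This exhibits a $G$-invariant divisor $E$ over $X$ with $A_{(X,D)}(E) = S_{(X,D)}(E)$, contradicting the strict inequality assumed in the reductive case.

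The main obstacle is the rigorous implementation of the ``equivariantisation'' step: one must produce a $G$-invariant valuation (or filtration, or test configuration) whose $\beta$-invariant is at most that of the original object, and in the polystable case one must detect strictness. For the semistable case this is handled by a Berkovich-space argument on the closure of the $G$-orbit of $v_E$ combined with semicontinuity of $A$ and continuity of $S$; for the polystable case, reductivity is essential because it guarantees that the closure of any non-closed $G$-orbit in the space of test configurations contains a $G$-fixed point, which is exactly the $G$-equivariant destabilising configuration one needs.
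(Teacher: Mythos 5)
First, a point of comparison: the paper itself gives no proof of this statement --- it is quoted directly from Zhuang \cite{Zhu20a}*{Corollary 4.14} and used as a black box --- so your attempt has to be measured against Zhuang's argument, not against anything in this paper.

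Your outline has the right overall shape (semistability via producing a $G$-invariant destabilizer, polystability via equivariantising a Futaki-trivial special test configuration), but both steps where the real difficulty sits are asserted rather than proved, and the mechanisms you propose do not work as stated. For the semistable half: for a general algebraic group $G$ (possibly non-connected, non-abelian, with unipotent factors) there is no single one-parameter subgroup of $G$, nor an evident ``Rees construction applied to $G$'', that degenerates the filtration attached to an arbitrary divisor $E$ over $X$ to a $G$-invariant filtration; and even granting such a degeneration, the inequalities you need (semicontinuity of $A$, continuity of $S$ along the degeneration, in the direction that decreases $A/S$) are not available off the shelf --- establishing a $G$-invariant destabilizer is the content of the theorem, not a lemma one can quote. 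The final approximation of the limiting $G$-invariant filtration by $G$-invariant divisorial valuations is also not a consequence of the Mori dream space property of a log Fano pair: the graded algebra of an arbitrary filtration or valuation need not be finitely generated, and the correct approximation statements are exactly the delicate inputs in this theory. Note also that the equality $\delta=\delta_G$ you aim for is stronger than what is needed (only the threshold at $1$ matters) and is not what Zhuang proves. His actual argument avoids averaging altogether: if the pair is K-unstable he constructs a canonical ``optimal destabilizing center'' from lc places of basis-type divisors, whose uniqueness forces $G$-invariance, and then extracts a $G$-invariant divisor with $A<S$; your route essentially restates the obstacle that this construction was designed to circumvent. For the polystable half, the existence of a Futaki-trivial special test configuration with non-isomorphic central fiber is indeed available, but the claim that reductivity allows one to deform it $G$-equivariantly while keeping the central fiber non-product is precisely the hard point (one must in particular rule out that the equivariantised degeneration becomes a product test configuration); invoking a Hilbert--Mumford/Luna-type principle does not substitute for that argument. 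As it stands, the proposal has genuine gaps at both of its key steps.
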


\section{Blow up of a quadric in a point} 
\label{sec-blow-up-of-quadric}
In this section, we consider a log Fano pair $(X, D)$ of Maeda type where $X$ is the blow up of a point on a smooth quadric $Q\subset \mathbb{P}^4$. Then $X$ is a Fano variety {2.30} as in \cite[12.3]{IP99}. Note that by \cite{Fu16a} the Fano variety $X$ is divisorially unstable. First we recall some elementary facts concerning its geometry. Consider the following diagram
\begin{equation*}
\vcenter{
\xymatrix{
& \ar[dl]_{f_1} \mathrm{Bl}_pQ = X \ar[dr]^{f_2}  &    
\\
\mathbb{P}^4\supset Q = X_1 &  & \mathbb{P}^3 = X_2
}}
\end{equation*}
where $f_1$ is the blow up of a point $p\in Q$, and $f_2$ is the blow up of a plane conic $q\subset \mathbb{P}^3$. Let $D_2'=T_pQ\cap Q\simeq \mathbb{P}(1,1,2)$ be the intersection of the tangent hyperplane with the quadric, and let $D_1'\subset X_2$ be a plane that contains the conic $q$. Put $D_1=(f_2^{-1})_*(D_1')$ and $D_2=(f_1^{-1})_*(D_2')$.

\begin{proposition}
\label{prop-unstable-blow-up-of-quadric}
The pair $(X, D)$ as above, where $D = aD_1 + bD_2$ for any $0\leq a,b<1$, is divisorially unstable.
\end{proposition}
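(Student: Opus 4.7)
The plan is to exhibit a single prime divisor on $X$ whose $\beta$-invariant is negative for every admissible $(a,b)$. The natural candidate is $E = E_1$, the $f_1$-exceptional divisor $\simeq \pp^2$. Since $E_1 \not\subset \Supp D$, the log discrepancy $A_{(X,D)}(E_1) = 1$ is independent of the coefficients, and the task reduces to showing $S_{(X,D)}(E_1) > 1$ uniformly.

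First I would fix coordinates. Let $H_1 = f_1^* \oo_Q(1)$ and $H_2 = f_2^* \oo_{\pp^3}(1) = H_1 - E_1$. The quadric cone $T_pQ \cap Q$ is ruled by lines through $p$, which are exactly the curves contracted by the projection from $p$; hence $D_2$ coincides with the $f_2$-exceptional divisor $E_2$. This gives $D_1 \sim H_2$ and $D_2 \sim H_1 - 2E_1$, and using $H_1^3 = 2$, $E_1^3 = 1$, $H_1 \cdot E_1 = 0$ (as a class on $E_1$), one finds triple intersections $(H_1^3, H_1^2 H_2, H_1 H_2^2, H_2^3) = (2,2,2,1)$. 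The nef cone is $\langle H_1, H_2 \rangle$, the pseudo-effective cone is $\langle E_1, D_2 \rangle$, and a short calculation yields $L = -K_X - D = pH_1 + qH_2$ with $p = 1+b$, $q = 2-a-2b$; the log Fano condition is exactly $q > 0$.

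The main step is the Zariski decomposition of $L - tE_1$. On $[0, 1+b]$ the class $(p-t)H_1 + (q+t)H_2$ is nef, so $P(t) = L - tE_1$. On $[1+b, 4-a]$, direct substitution gives
\[
L - tE_1 \;=\; (4-a-t)H_2 + (t-1-b)E_2,
\]
with $P(t) = (4-a-t)H_2$ (the pullback of an ample class from $\pp^3$) and $N(t) = (t-1-b)E_2$ (a non-negative multiple of the $f_2$-exceptional divisor). Since $f_{2*}\oo_X(mE_2) = \oo_{\pp^3}$ for $m \geq 0$, one reads off $\vol(L - tE_1) = (4-a-t)^3$ on this range, and $\tau_{(X,D)}(E_1) = 4-a$.

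Integrating the two pieces produces the closed forms
\[
S'_{(X,D)}(E_1) \;=\; 2p^4 + 6p^3 q + 6 p^2 q^2 + 2 p q^3 + \tfrac{1}{4} q^4, \qquad L^3 \;=\; 2p^3 + 6 p^2 q + 6 p q^2 + q^3,
\]
and subtraction factors cleanly as
\[
\beta'_{(X,D)}(E_1) \;=\; -2\,b\,p\,(p^2 + 3pq + 3q^2) \;+\; q^3\!\left(1 - 2p - \tfrac{q}{4}\right).
\]
The first summand is $\leq 0$. For the second, $1 - 2p - q/4 = -\tfrac{3}{2} + \tfrac{a}{4} - \tfrac{3b}{2} \leq -\tfrac{5}{4}$ whenever $a < 1$, so it is strictly negative once $q > 0$. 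Hence $\beta_{(X,D)}(E_1) < 0$ throughout the log Fano locus, proving divisorial instability. The only non-routine point is justifying the volume formula on $[1+b, 4-a]$, but this follows from the fact that the negative part is supported on the contraction locus of $f_2$, so $H^0(X, m(L - tE_1))$ agrees with $H^0(\pp^3, \oo(m(4-a-t)))$ for all $m \geq 0$.
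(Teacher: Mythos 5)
The decisive step of your argument fails at the identification of the boundary. The conic $q$ spans a unique plane $\Pi\subset\mathbb{P}^3$, and $D_1$ is by definition the strict transform of that plane under $f_2$. Since $\Pi$ contains $q$, we have $f_2^*\Pi = D_1 + E_2$, hence $D_1 \sim H_2 - E_2 = (H_1-E_1)-(H_1-2E_1) = E_1$: the divisor $D_1$ \emph{is} the $f_1$-exceptional plane (geometrically, $f_2$ maps $E_1=\mathbb{P}(T_pQ)$ isomorphically onto $\Pi$; this is exactly Maeda's case 5.1(i), where the contracted $\mathbb{P}^2$ is a component of $\Delta$). Consequently both premises of your computation are false for the pair in the proposition: $E_1\subset\operatorname{Supp}(D)$, so $A_{(X,D)}(E_1)=1-a$ rather than $1$; and $D_1\not\sim H_2$, so the polarization is $L=(3-b)H_1+(2b-a-2)E_1=(4-a)D_1+(3-b)D_2$, not your $pH_1+qH_2$ with $p=1+b$, $q=2-a-2b$ (the two agree only when $a=0$). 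What you have computed is the $\beta$-invariant of $E_1$ for a different boundary, with $D_1$ replaced by the class of a plane not containing $q$, and this does not prove the proposition for $a>0$.

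The strategy itself is sound and, once corrected, gives a proof genuinely different from the paper's, which instead bounds $\beta'_{(X,D)}(D_2)$ from above by truncating the integral at the nef threshold $1+a/2-b$. With the correct $L$ and $A_{(X,D)}(E_1)=1-a$, your two-piece Zariski decomposition goes through verbatim: $L-xE_1$ is nef for $0\le x\le 1+b-a$, and for $1+b-a\le x\le 4-a$ one has $L-xE_1=(4-a-x)H_2+(x+a-b-1)D_2$ with negative part supported on the $f_2$-exceptional divisor $D_2$, so $\mathrm{vol}(L-xE_1)=(4-a-x)^3$ there. Integrating gives
\[
\beta'_{(X,D)}(D_1)\;=\;-2b\,(3-b)^3\;-\;\tfrac14\,(2+a-2b)^3\,(6-3a-2b)\;<\;0
\]
for all $0\le a,b<1$, which does prove divisorial instability. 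So the fix is mechanical, but as written the proof rests on a misidentification of $D_1$ and does not establish the statement.
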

\begin{proof}

We have $D_1 \simeq \mathbb{P}^2$, $D_2\simeq \mathbb{F}_2$, and $D_1\cap D_2$ is a smooth conic on $D_1$ and the $(-2)$-curve on $D_2$. Let $l_1$ be a line in $D_1$, and $l_2$ be a ruling in $D_2$. We have $l_1\cdot D_1=-1,\ l_1\cdot D_2 = 2,\ l_2\cdot D_1 = 1,\ l_2\cdot D_2 = -1$, and
$
\Gamma=-K_X - \Delta \sim 3D_1 + 2D_2,
$
Note that $\Gamma\cdot l_1 = \Gamma\cdot l_2 = 1$. This shows that $(X, \Delta=D_1+D_2)$ is a log Fano pair. 
Compute the intersection theory on $X$: 
$
D_1^3 = 1,\
D_1^2\cdot D_2 = - 2,\
D_1 \cdot D_2^2 = 4,\
D_2^3 = -6.
$
One checks that $\overline{\mathrm{NE}}(X)$ is generated by $l_1$ and $l_2$, and
\begin{equation*}
\begin{split}
\mathrm{Nef}(X) &= \mathbb{R}_{\geq 0} \big[2 D_1 + D_2\big] + \mathbb{R}_{\geq 0} \big[D_1 + D_2\big], \ \ \ \
\overline{\mathrm{Eff}}(X) = \mathbb{R}_{\geq 0} \big[D_1\big] + \mathbb{R}_{\geq 0} \big[D_2\big]. 
\end{split}
\end{equation*}
Note that $\overline{\mathrm{Mov}}(X)=\mathrm{Nef}(X)$. 
We have $K_X\cdot l_1=-2$, $K_X\cdot l_2=-1$, so for the nef value we have $\epsilon=2$, and the nef value morphism (see Remark \ref{rem-nef-value}) is the contraction $f_1$. 
Note that $L=-K_X-D \sim ( 4 - a ) D_1 + ( 3 - b ) D_2$ is ample for $0\leq a,b<1$. 
We estimate $\beta'_{(X, D)}(D_2)$. By definition $A_{(X, D)}(D_2) = 1 - b$. We have
$
L - xD_2 \sim \big( 4 - a \big) D_1 + \big( 3 - b - x \big) D_2 
= \big( 2 - a/2 \big) \big( 2D_1+D_2 \big) + \big( 1 + a/2 - b - x \big) D_2.
$
Then $\tau_{(X, D)}(D_2)=3-b$, and $\epsilon_{(X, D)}(D_2)=1+a/2-b$.
Compute 
\begin{multline*}
\int_0^{1+a/2-b}{\mathrm{vol}\big(L-xD_2\big)dx} = 
\int_0^{1+a/2-b}{\bigg(\big( 4 - a \big) D_1 + \big( 3 - b - x \big) D_2\bigg)^3dx} \\
=\big(4-a\big)^3\big(a/2-b+1\big) + 3\big(4-a\big)^2\big(a/2-2\big)^2 - 3\big(4-a\big)^2\big(b-3\big)^2 \\
+ 4\big(4-a\big)\big(a/2-2\big)^3 - 4\big(4-a\big)\big(b-3\big)^3 + 3\big(a/2-2\big)^4/2 - 3\big(b-3\big)^4/2.
\end{multline*}

We have  
\begin{multline*}
\beta'_{(X, D)}(D_2) \leq A_{(X, D)}(D_2)L^3 - \int_0^{1+a/2-b}{\mathrm{vol}\big(L-xD_2\big)dx} \\
= \big(1-b\big) \big(4-a\big)^3 - 6 \big(1-b\big) \big(4-a \big)^2 \big(3-b\big) + 12 \big(1-b\big) \big(4-a\big) \big(3-b\big)^2 - 6 \big(1-b\big) \big(3-b\big)^3 \\
-  (4-a )^3 (a/2-b+1 ) - 3 (4-a )^2 (2-a/2 )^2 - 3 (4-a )^2 (3-b )^2 \\
+ 4 (4-a ) (2-a/2 )^3 - 4 (4-a ) (3-b )^3 - 3 (2-a/2 )^4/2 + 3 (3-b )^4/2 \\
= \big(4-a\big)\big(3-b\big)\big(1-b\big)\big( -12b+9a\big) - 6\big(4-a\big)^2\big(3-b\big) \\
+ \big(3-b\big)^3\big( -35/2 + 9b/2 + 4a \big) - \big(22+5a/2\big)\big(2-a/2\big)^3/2 \\
\leq 4\cdot 3\cdot 9 - 6\cdot9\cdot 2-8\cdot 9 -11\cdot 27/2<0.
\end{multline*}
Hence, the pair is K-unstable.
\end{proof}

\section{$\mathbb{P}^2$-bundles over $\mathbb{P}^1$}
\label{section-p2-bundles}
In this section, we consider log Fano pairs $(X, D)$ of Maeda type such that $X$ has the structure of an extremal $\mathbb{P}^2$-bundle over $\mathbb{P}^1$. Then
\[
X = \mathbb{P}_{\mathbb{P}^1}\big(\oo\oplus\oo(k)\oplus\oo(n)\big), \ \ \ \ \ \ \ 0\leq k \leq n.
\]
Denote by $F$ a fiber of the projection $\pi\colon X\to \mathbb{P}^1$, and by $H$ the tautological divisor on $X$. Note that the linear system $|H|$ is base point free and defines a contraction on $X$. 
It is easy to check that
\begin{equation*}
\begin{split}
K_X &\sim -3H - (2-k-n)F, \ \ \ \ \ \ H^3 = k + n, \ \ \ \ \ \ H^2F=1, \\
\mathrm{Nef}(X) &= \mathbb{R}_{\geq 0} \big[H\big] + \mathbb{R}_{\geq 0} \big[F\big], \ \ \ \ \ \ 
\overline{\mathrm{Eff}}(X) = \mathbb{R}_{\geq 0} \big[H-nF\big] + \mathbb{R}_{\geq 0} \big[F\big]. \\
\end{split}
\end{equation*}

According to \cite[5.3, 9.1]{M83}, there are the following possibilities for $(X, \Delta)$ in the case of reducible~$\Delta=\sum D_i$. They are denoted \hyperlink{compute-p2-bundle-cases-remark}{D1} --\hyperlink{compute-p2-bundle-cases-viii}{D8} in Table 1.

\begin{remark}
\label{rem-extra-case1}
Note that there is also the following pair $(X, \Delta=D_1+D_2)$ considered in \cite[9.1.3\,(vi)]{M83}. Here $X=\mathbb{P}_{\mathbb{P}^1}(\oo\oplus\oo(1)\oplus\oo(1))$, $D_1\sim H$, $D_2\sim H$. However, it is not a log Fano pair. Indeed, for a smooth rational curve $l = (H-F)^2$ which is an exceptional curve of a small contraction on $X$, we have $H\cdot l = 0$. This shows that the divisor $-K_X-\Delta\sim H$ is not ample.
\end{remark}


One checks that in all the cases the nef value morphism is extremal and coincides with $\pi$. In this section, we prove the following
\begin{proposition}
\label{prop-unstable-p2bundles}
Let $(X, D)$ be a log Fano pair of Maeda type where $(X, \Delta)$ is as in Table $1$. Then $(X, D)$ is divisorially unstable unless $(X, D)$ is as in the case \hyperlink{compute-p2-bundle-cases-v}{D5}.
\end{proposition}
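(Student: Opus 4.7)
The proof will proceed case by case through the seven cases D1, D2, D3, D4, D6, D7, D8 of Table $1$. In each case I exhibit a prime divisor $E$ on $X$ and show that
\[
\beta'_{(X,D)}(E) \;\leq\; A_{(X,D)}(E)\cdot L^3 \;-\; \int_0^{\epsilon_{(X,D)}(E)} (L-xE)^3\,dx \;<\; 0,
\]
in parallel with the blow-up-of-quadric computation of Proposition \ref{prop-unstable-blow-up-of-quadric}. Integrating only up to the nef threshold $\epsilon_{(X,D)}(E)$ (rather than the pseudo-effective threshold) keeps $L - xE$ nef throughout, so $\mathrm{vol}(L-xE)=(L-xE)^3$ and no Zariski decomposition is needed in the estimate.

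I work throughout in the basis $\{H,F\}$, using $H^3=k+n$, $H^2\cdot F=1$, $F^2=0$, and $\mathrm{Nef}(X)=\mathbb{R}_{\geq 0}[H]+\mathbb{R}_{\geq 0}[F]$. Writing $L=\alpha H+\beta F$ with $\alpha,\beta$ affine in the coefficients, the ampleness of $L$ is equivalent to $\alpha>0$ and $\beta>0$. Because $F^2=0$, the self-intersection numbers $L^3$, $L^2\cdot E$, $L\cdot E^2$, $E^3$ are elementary polynomials in $k,n$ and the boundary coefficients, and the integral above is an explicit cubic in $x$; the task reduces to checking that a resulting polynomial in $(a,b)$ (or $(a,b,c)$ in D8) is strictly negative on the admissible region. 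The choice of witness $E$ depends on the case: for D6, D7, D8, where $F$ is already a boundary component, I take $E$ to be a horizontal component of $\Delta$ (in the class of $H$, or $2H$ in D7), whose small log discrepancy $1-\mathrm{coeff}$ paired with the large $F$-content of $L$ forces the bound negative, exactly as in Proposition \ref{prop-unstable-blow-up-of-quadric}; for D1, D2, D4 (non-trivial bundle, $n\geq 1$), I take $E \sim H-nF$, whose negative intersection with $H$ makes the volume integral dominate $A_{(X,D)}(E)\cdot L^3$; for D3, where both components are in the class $H$ on $\mathbb{P}_{\mathbb{P}^1}(\mathcal{O}^{\oplus 2}\oplus\mathcal{O}(1))$, I use the unique effective divisor $E$ in the class $H-F$, which is effective precisely because this bundle is non-trivial and is the feature distinguishing D3 from the K-semistable case D5 on the trivial bundle.

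The main obstacle is case D3. There the witness $E\sim H-F$ is not a boundary component, so $A_{(X,D)}(E)=1$ is as large as possible and the negativity must come entirely from the geometry of the sub-$\mathbb{P}^1$-bundle rather than from a discrepancy factor; verifying strict negativity on the whole admissible region requires the most careful arrangement of terms, since the D3 pair approaches the K-semistable D5 pair in moduli. The other cases reduce, after expansion, to explicit polynomial inequalities in the coefficients that can be handled by monotonicity in the bundle parameters $k$ and $n$, reducing to the smallest admissible values of these parameters and direct verification.
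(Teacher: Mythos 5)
Your general framework (destabilizing with a single divisor and bounding $\beta'$ from above by truncating the volume integral at the nef threshold) is fine, and it does work in D4 and D6, where one boundary component destabilizes for all admissible coefficients. But the central premise of your plan --- that in each case one \emph{fixed} divisor $E$ has $\beta'_{(X,D)}(E)<0$ on the whole admissible region --- is false, and this is a genuine gap, not a bookkeeping issue. In case D2 with $n=1$ your witness $E\sim H-nF=D_2$ has $\beta'_{(X,D)}(D_2)=(1-2n)(3-a-b)^3(1+a-3b)/4+(3-a-b)^2(1+2n-an)(a-2b)$, which at $(a,b)=(9/10,1/10)$ equals $2.68>0$ while the pair is still ample (since $n(1-b)<1$); the same happens in D1 and D8, e.g.\ $k=n=1$, $(a,b)=(9/10,1/10)$ and $c$ small. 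In D3 your witness $E\sim H-F$ gives $\beta'(E)=\tfrac12(3-a-b)^2\bigl(6-(3-a-b)^2\bigr)$, which is positive as soon as $a+b>3-\sqrt{6}\approx 0.55$. In D7 your heuristic is exactly backwards: the horizontal component $D_1\sim 2H$ has $\beta'(D_1)=(3-2a)^2(2-b)(3-4a)/2>0$ for $a<3/4$, and the destabilizer is the fiber $F$, not a horizontal divisor. So in D1, D2, D3, D7, D8 the destabilizing divisor necessarily varies with the coefficients, and your case-by-case single-witness scheme cannot be completed as stated (you also omit D1 with $k=n=0$, which falls outside your ``$n\geq1$'' witnesses).

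The paper gets around this by producing a coefficient-dependent destabilizer without pinning it down: for D2 (with D3 as its $n=0$ specialization) it shows $\beta'(D_1)+\beta'(D_2)+\beta'(E)=0$ with $E\sim H-F$, so either some term is negative or all three vanish, and the all-vanishing case is ruled out by a separate argument forcing $b=0$, contradicting reducibility; for D8 (hence D1 and D6 as specializations) it proves $\beta'(D_1)+\beta'(D_2)<0$ on the entire ample region; and for the product cases D7 and D8 with $k=n=0$ it invokes Zhuang's product theorem together with the two-dimensional classification of Fujita (Theorem \ref{thm-2-dim}). To repair your proposal you would have to either reproduce such linear-combination or product-type arguments, or partition the coefficient region and supply a different witness on each piece (e.g.\ in D3, $E\sim H-F$ only for $a+b<3-\sqrt{6}$ and a boundary component beyond that), which is substantially more than the uniform estimate you describe.
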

\hypertarget{compute-p2-bundle-cases-remark}
Note that the case \hyperlink{compute-p2-bundle-cases-remark}{D1}  can be treated as a special case of \hyperlink{compute-p2-bundle-cases-viii}{D8} with the value of the parameter $c$ equal to $0$. The case \hyperlink{compute-p2-bundle-cases-remark}{D3} can be treated as a special case of \hyperlink{compute-p2-bundle-cases-ii}{D2} for $n=0$. The case \hyperlink{compute-p2-bundle-cases-remark}{D6} can be treated as a particular case of \hyperlink{compute-p2-bundle-cases-viii}{D8} with $k=0$ and the value of the parameter $a$ equal to $0$. 




\

\textbf{Case} D2. 
\hypertarget{compute-p2-bundle-cases-ii}
We show that in this case the log Fano pair $(X, D)$ is divisorially unstable for any values of the parameters. We have
\[
X = \mathbb{P}_{\mathbb{P}^1}\big(\oo\oplus\oo(1)\oplus\oo(n)\big), \ \ \ \ \ n\geq 0, \ \ \ \ \ D_1\sim H, \ \ \ \ \ D_2\sim H - nF.
\]
Put $D = aD_1 + bD_2$, $0\leq a, b<1$. Then  
$
L\sim-K_X-D \sim \big(3 - a - b\big) H + \big( 1 - n +  b n \big)F 
=\big(3 - a - b\big) \big( H - n F \big) + \big( 1 + 2n - a n \big)F
$
is ample if and only if $n(1-b) < 1$. Compute $\beta'_{(X, \Delta')}(D_1)$. We have 
\begin{equation*}
\begin{split}
S'_{(X, D)}(D_1)
=&\int_0^{3-a-b}{\bigg(\big(3 - a - b - x\big) H + \big( 1 - n + b n \big)F \bigg)^3dx} \\
=& \big(n + 1\big) \big(3-a-b\big)^4/4 + \big(3-a-b\big)^3 \big( 1 - n + b n \big).
\end{split}
\end{equation*}
Then $
\beta'_{(X, D)}(D_1) 
= \big(3 - a - b\big)^3\big(n+1\big) \big( 1 - 3a + b \big)/4 + \big(3 - a - b\big)^2\big( 1 - n + b n \big) \big( b - 2a  \big). 
$ Compute $\beta'_{(X, D)}(D_2)$. We have $A_{(X, D)}(D_2) = 1 - b$. It is easy to see that $L-xD_2$ for $x\geq0$ is nef if and only if it is pseudo-effective if and only if $x\leq 3-a-b$.
Compute
\begin{equation*}
\begin{split}
S'_{(X, D)}(D_2)
=& \int_0^{3-a-b}{\bigg(\big(3 - a - b - x\big) \big( H - n F \big) + \big( 1 + 2n - a n \big)F \bigg)^3dx} \\
=& \big(1-2n\big) \big(3-a-b\big)^4/4 + \big(3-a-b\big)^3 \big( 1 + 2n - a n \big).
\end{split}
\end{equation*}

Then
$
\beta'_{(X, D)}(D_2) 
= \big(1-2n\big)\big(3 - a - b\big)^3\big( 1 - 3b + a \big)/4 + \big(3 - a - b\big)^2\big( 1 + 2n - a n \big) \big(a-2b\big).
$
Compute $\beta'_{(X, \Delta')}(E)$ where $E$ is a unique prime divisor such that $E\sim H-F$. We have $L\sim (3-a-b)H+(1-n+bn)F = (3-a-b)(H - F) + ( 4 - a - n+b(n-1) )F$. Then
\begin{equation*}
\begin{split}
S'_{(X, D)}(E)
=& \int_0^{3-a-b}{\bigg(\big(3 - a - b - x\big) H + \big( 4 - a - n+b(n-1) \big)F \bigg)^3dx} \\
=& \big(n - 2\big) \big(3-a-b\big)^4/4 + \big(3-a-b\big)^3 \big( 4 - a - n+b(n-1) \big).
\end{split}
\end{equation*}

Then $
\beta'_{(X, \Delta')}(E) 
= (3-a-b)^3(n-2)(1+a+b) /4 + (3-a-b)^2( 4 - a - n+b(n-1)) ( a+~b).$
Note that $\beta'_{(X, D)}(D_1) + \beta'_{(X, D)}(D_2) + \beta'_{(X, D)}(E) = 0$. If not all the numbers $\beta'_{(X, D)}(D_1)$, $\beta'_{(X, D)}(D_2)$, $\beta'_{(X, D)}(E)$ are equal to zero, then one of them is negative, and we are done. Assume that $\beta'_{(X, D)}(D_1)=\beta'_{(X, D)}(D_2)=\beta'_{(X, D)}(E)=0$. It follows that $L^3=S'_{(X, D)}(D_2)/(1-b)=S'_{(X, D)}(E)$.
Then
\begin{multline*}
\big(1-2n\big) \big(3-a-b\big)^4/4 + \big(3-a-b\big)^3 \big( 1 + 2n - a n \big) \\
= \big(n - 2\big) \big(3-a-b\big)^4/4 + \big(3-a-b\big)^3 \big( 4 - a - n+b(n-1) \big) - bS'_{(X, D)}(E). 
\end{multline*}
Thus
$\big(n-1\big) \big(3-a-b\big)^4/4  =  - bS'_{(X, D)}(E)\leq 0.
$
Hence $n\leq1$ and $b=0$. However, this contradicts to the assumption that $D$ is reducible. Hence, we are done in this case.

\

\textbf{Case} D4. 
\hypertarget{compute-p2-bundle-cases-iv}
We show that in this case the log Fano pair $(X, D)$ is divisorially unstable for any values of the parameters. We have
\[
X = \mathbb{P}_{\mathbb{P}^1}\big(\oo\oplus\oo\oplus\oo(n)\big), \ \ \ \ \ n\geq 1, \ \ \ \ \ D_1\sim H + F, \ \ \ \ \ D_2\sim H - nF.
\]

Put $D = aD_1 + bD_2$, $0\leq a, b<1$. Then  
$
L=-K_X-D \sim \big(3 - a - b\big) H + \big( 2 - n - a +  b n \big)F =\big(3 - a - b\big) \big( H - n F \big) + \big( 2 + 2n - a - a n \big)F
$
is ample if and only if $a + n(1-b) < 2$. We have $A_{(X, D)}(D_2) = 1 - b$. 
It is easy to see that $L-xD_2$ is nef if and only if it is pseudo-effective if and only if $x\leq 3-a-b$.
Compute
\begin{equation*}
\begin{split}
S'_{(X, D)}(D_2)
=& \int_0^{3-a-b}{\bigg( \big(3 - a - b - x\big) \big( H - n F \big) + \big( 2 + 2n - a - a n \big)F \bigg)^3dx} \\
=& - n \big(3-a-b\big)^4/2 + \big(3-a-b\big)^3 \big( 2 + 2n - a - a n \big).
\end{split}
\end{equation*}

We have
\begin{multline*}
\beta'_{(X, D)}(D_2) = \big(1- b\big)\big(3 - a - b\big)^3\big(-2n\big) + 3\big(1-b\big)\big(3 - a - b\big)^2\big( 2 + 2n - a - a n \big) \\
+ n \big(3-a-b\big)^4/2 - \big(3-a-b\big)^3 \big( 2 + 2n - a - a n \big) \\
= \frac{1}{2} \Big(3 - a - b\Big)^2\bigg( a^2 (-n) - 2 a^2 + 2 a b n + 4 a b + 2 a n + 4 a - 3 b^2 n + 2 b n - 8 b - 3 n \bigg).
\end{multline*}

Using $n\geq 1$, estimate
\begin{multline*}
\frac{2\beta'_{(X, D)}(D_2)}{(3-a-b)^2} =
 n \Big(-\big(a-b\big)^2 -2\big(1-a\big) - b^2 - \big(1-b\big)^2\Big) - 2 a^2 + 4 a b + 4 a - 8 b  \\
\leq -6b(1-a)-3(1-a)^2-3b^2<0.
\end{multline*}

Hence, we are done in this case.


\

\textbf{Case} D5. 
\hypertarget{compute-p2-bundle-cases-v}
We show that in this case the log Fano pair $(X, D)$ is K-semistable for some values of the parameters. We have 
\[
X = \mathbb{P}^2 \times \mathbb{P}^1, \ \ \ \ \ \ D_1\sim H+F, \ \ \ \ \ \ D_2\sim H.
\]

Put $D = aD_1 + bD_2$ for $0\leq a, b<1$. Hence 
$
L=-K_X - D \sim \big(3-a-b\big)H + \big(2-a\big)F =  \big(3-a-b\big)(H+F) - \big(1-b\big)F
$
is ample. Then $L-xD_1 \sim \big(3-a-b-x\big)H + \big(2-a-x\big)F$ is nef if and only if it is pseudo-effective if and only if $x\leq 2-a$. We compute $\beta'_{(X, D)}(D_1)$. We have $A_{(X, D)}(D_1) = 1-a$. 
Then 
\begin{multline*}
\beta'_{(X, D)}(D_1) = 3\big(1-a\big)\big(3-a-b\big)^2\big(2-a\big) - \int_{0}^{2-a}{\bigg(\big(3-a-b-x\big)H + \big(2-a-x\big)F\bigg)^3 dx} \\
= \bigg(\big(-2+a\big) \Big(9 a^3+8 \big(-5+2 b\big)+2 a^2 \big(-29+8 b\big)+2 a \big(53-26 b+3 b^2\big)\Big)\bigg)/4.
\end{multline*}
which is negative for $2/3<a<1$, $0\leq b<1$. Now, we compute $\beta'_{(X, D)}(D_2)$:
\begin{multline*}
\beta'_{(X, D)}(D_2)= 3\big(1-b\big)\big(3-a-b\big)^2\big(2-a\big) - \int_{0}^{3-a-b}{\bigg(\big(3-a-b-x\big)H + \big(2-a\big)F\bigg)^3 dx} \\
=\big(3-a-b\big)^2\big(2-a\big)(a-2b). 
\end{multline*}

By Lemma 9.1 and Remark 9.6 in \cite{Fu16a} (they are formulated for the anticanonical polarisation, but work in our setting as well) it is enough to compute $\beta'_{(X, D)}(D')$ for $D'$ equivalent to one of the following divisors: $H, F, H+F, 2H+F$. However, $\beta'_{(X,D)}(H)\geq \beta'_{(X,D)}(D_2)$, $\beta'_{(X,D)}(H+F)\geq \beta'_{(X,D)}(D_1)$, and $\beta'_{(X, D)}(F)\leq \beta'_{(X, D)}(2H+F)$ so it is enough to compute $\beta'_{(X, D)}(F)$:
\begin{multline*}
\beta'_{(X, D)}(F)=3 \big(3-a-b\big)^2\big(2-a\big) - \int_{0}^{2-a}{\bigg(\big(3-a-b\big)H + \big(2-a-x\big)F\bigg)^3 dx} \\
=3 a \big(3-a-b\big)^2\big(2-a\big)/2>0.
\end{multline*}

We conclude that the pair $(X, D)$ is divisorially semistable if one of the following conditions holds:

$0\leq a\leq \alpha\approx0.507$, and $b\leq a/2$, where $\alpha$ is a root of the equation $-40+106\alpha-58\alpha^2+9\alpha^3=~0$, 

or $\alpha\approx0.507\leq a\leq 20/37$, and  

$(-4+13 a-4 a^2)/(3 a)-\sqrt{(32-88 a+84 a^2-34 a^3+5 a^4)/a^2}/(3 \sqrt{2})\leq b\leq a/2$.

\begin{lemma}
\label{lem-aut-product}
Let $G$ be the automorphism group of the pair $(X, \Delta)$. Then $G\simeq\mathrm{GL}(2, \mathbb{C})$. In particular, $G$ is reductive. 
\end{lemma}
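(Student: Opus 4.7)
The plan is to realise $G$ as the joint stabiliser of $D_1$ and $D_2$ inside $\mathrm{Aut}(X)$. Since $\mathbb{P}^2$ and $\mathbb{P}^1$ are non-isomorphic, any automorphism of $X$ respects the product decomposition and hence $\mathrm{Aut}(X) \simeq \mathrm{PGL}(3,\mathbb{C}) \times \mathrm{PGL}(2,\mathbb{C})$. Moreover, since the classes $[D_1] = H + F$ and $[D_2] = H$ in $\mathrm{Pic}(X)$ are distinct, no element of $G$ can exchange $D_1$ and $D_2$, so $G$ coincides with the subgroup of $\mathrm{Aut}(X)$ preserving each $D_i$ set-theoretically.

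Next I would read off the geometric data attached to $(D_1,D_2)$. Choose coordinates so that $D_2 = \ell \times \mathbb{P}^1$ with $\ell = \{x_2 = 0\} \subset \mathbb{P}^2$. The first projection $\pi_1 \colon D_1 \to \mathbb{P}^2$ is birational and contracts exactly one fibre $\{p\} \times \mathbb{P}^1$ to a single point $p \in \mathbb{P}^2$, while the second projection $\pi_2 \colon D_1 \to \mathbb{P}^1$ realises $D_1$ as the universal family over the pencil of lines in $\mathbb{P}^2$ through $p$, together with a fixed identification of this pencil with $\mathbb{P}^1$. Log smoothness of $(X, \Delta)$ forces $p \notin \ell$ (otherwise $D_1 \cap D_2$ would contain the whole fibre $\{p\} \times \mathbb{P}^1$, violating the SNC condition), so we may normalise $p = [0:0:1]$.

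The last step is to translate preservation of each $D_i$ into explicit conditions on $(A, B) \in \mathrm{PGL}(3,\mathbb{C}) \times \mathrm{PGL}(2,\mathbb{C})$. Preservation of $D_2$ says exactly that $A$ stabilises $\ell$, and preservation of $D_1$ says exactly that $A$ fixes $p$ and that $B$ coincides with the automorphism of the pencil of lines through $p$ induced by the action of $A$ on $\mathbb{P}(T_p \mathbb{P}^2)$. The stabiliser $\mathrm{Stab}_{\mathrm{PGL}(3,\mathbb{C})}(p,\ell)$ in the above coordinates is the image of the block-diagonal subgroup $\{\mathrm{diag}(A', f) : A' \in \mathrm{GL}(2,\mathbb{C}),\ f \in \mathbb{C}^*\}$, which after normalising $f = 1$ is isomorphic to $\mathrm{GL}(2,\mathbb{C})$. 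Since the condition on $D_1$ uniquely determines $B$ from $A$, the first projection yields an isomorphism $G \xrightarrow{\sim} \mathrm{GL}(2,\mathbb{C})$, and reductivity is immediate. The only point needing a brief verification is the compatibility identity $A(L_t) = L_{B(t)}$ for the lines $L_t$ of the pencil, which both shows that the assignment $A \mapsto B(A)$ is well-defined and that every $A \in \mathrm{Stab}(p,\ell)$ lifts to an element of $G$.
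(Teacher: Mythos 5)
Your proposal is correct and takes essentially the same approach as the paper: identify $\mathrm{Aut}(X)\simeq\mathrm{PGL}(3,\mathbb{C})\times\mathrm{PGL}(2,\mathbb{C})$, show that the $\mathrm{PGL}(2,\mathbb{C})$-component of any element of $G$ is uniquely determined by its $\mathrm{PGL}(3,\mathbb{C})$-component, and identify $G$ with the stabilizer $\mathrm{Aut}(\mathbb{P}^2;\ell,p)\simeq\mathrm{GL}(2,\mathbb{C})$ of a line and a point off it. The only cosmetic difference is that you obtain the compatibility $A(L_t)=L_{B(t)}$ from the incidence description of the $(1,1)$-divisor as the universal family of the pencil through $p$, whereas the paper reads the same determination off the $(1,1)$-curve $D_1\cap D_2$ inside the component isomorphic to $\mathbb{P}^1\times\mathbb{P}^1$.
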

\begin{proof}
We have $X= \mathbb{P}^2\times \mathbb{P}^1$. It is easy to see that $\mathrm{Aut}(X)\simeq\mathrm{PGL}(3, \mathbb{C})\times\mathrm{PGL}(2, \mathbb{C})$. Denote by $l=\pi_1(D_1)$ the image of $D_1$ via the natural projection $\pi_1\colon X\to \mathbb{P}^2$. Also, let $P$ be the image of a unique $(-1)$-curve on $D_2\simeq\mathbb{F}_1$ via $\pi_1$. 

Note that $D_1\simeq \mathbb{P}^1\times  \mathbb{P}^1$, and $C=D_2|_{D_1}$ is a curve of bidegree $(1, 1)$ on it. Using the curve $C$, we can identify the dual to the first copy of $\mathbb{P}^1$ with the second copy of $\mathbb{P}^1$ in $D_1\simeq \mathbb{P}^1\times  \mathbb{P}^1$. Note that $\mathrm{Aut}^0(X)\simeq\mathrm{PGL}(2, \mathbb{C})\times\mathrm{PGL}(2, \mathbb{C})$. Since $G$ preserves $C$, any automorphism of the first copy of $\mathbb{P}^1$ uniquely defines an automorphism of the second copy of $\mathbb{P}^1$.

Thus $G$ is contained in $\mathrm{Aut}(\mathbb{P}^2; l, P)$, that is, the subgroup in $\mathrm{Aut}(\mathbb{P}^2)\simeq \mathrm{PGL}(3, \mathbb{C})$ that preserves both $l$ and $P$. On the other hand, any element of this group can be lifted to $\mathrm{Aut}(X)$ in a unique way, and clearly it preserves $\Delta=D_1+D_2$. The fact that $\mathrm{Aut}(\mathbb{P}^2; l, P)$ is isomorphic to $\mathrm{GL}(2, \mathbb{C})$ is straightforward.
\end{proof}
As a consequence, there are the following $G$-invariant centers for the pair $(X, D)$: $D_1$, $D_2$, $Z=D_1\cap D_2$ and $Z'$ which is the $(-1)$-curve on $D_1\simeq\mathbb{F}_1$. To check K-polystability of the pair $(X, D)$, we apply Abban-Zhuang theory as in Proposition \ref{prop-formulas-for-beta} to $Z$ and $Z'$. 

Let $Y=D_2$. Let $Z$ be the intersection $D_1\cap D_2$ which is a curve of bidegree $(1,1)$ on $D_2$. We have $A_{(X, D)}(Y)/S_{(X, D)}(Y) \geq 1$ for such $a, b$ that $\beta'_{(X, D)}(D_1)> 0$. By Proposition \ref{prop-formulas-for-beta}, we have
\begin{multline*}
S(W^Y_{\bullet, \bullet}; Z) 
= \frac{3}{L^3} \int_0^{\infty}\int_0^{\infty}{\mathrm{vol}\Big((L-uY)|_Y - v Z\Big)dv du} \\
= \frac{3}{L^3} \int_0^{2-a}\int_0^{3-a-b-u}{\big(3-a-b-u-v,2-a-u)\big)^2dv du} \\
= \frac{3}{L^3} \int_0^{2-a}\int_0^{3-a-b-u}{2\big(3-a-b-u-v\big)\big(2-a-u\big)dv du} \\
= \big(2-a\big)^2 \big(34+3 a^2-28 b+6 b^2+4 a (-5+2 b)\big)/12\big(3-a-b\big)^2\big(2-a\big).
\end{multline*}

One checks that $
\frac{A_{(Y, D_Y)}(Z)}{S(W^Y_{\bullet, \bullet}; Z)}>1
$ for $a$ and $b$ such that the pair $(X, D)$ is divisorially semistable. Now we apply Abban-Zhuang theory to the $G$-invariant center $Z'$ which is the $(-1)$-curve on $D_1\simeq\mathbb{F}_1$. Put $Y'=D_1$. Then 
\begin{multline*}
S(W^{Y'}_{\bullet, \bullet}; Z') 
= \frac{3}{L^3} \int_0^{\infty}\int_0^{\infty}{\mathrm{vol}\Big((L-u{Y'})|_{Y'} - v Z'\Big)dv du} \\
= \frac{3}{L^3} \int_0^{\infty}\int_0^{\infty}{\mathrm{vol}\Big(\big(3-a-b-u\big)H|_{Y'}+(2-a-u)F|_{Y'} - v Z'\Big)dv du} \\
= \frac{3}{L^3} \int_0^{2-a}\int_0^{3-a-b-u}{\Big(\big(3-a-b-u-v\big)h+(2-a-u+v)f\Big)^2dv du} \\
= \frac{3}{L^3} \int_0^{2-a}\int_0^{3-a-b-u}{\Big(\big(3-a-b-u-v\big)^2+2\big(3-a-b-u-v\big)(2-a-u+v)\Big)dv du} \\
+ \frac{3}{L^3} \int_{2-a}^{5/2-a-b/2}\int_0^{u-(2-a)}{\big(5-2a-2u-b\big)^2dv du} \\
= (1/48 (-1+b)^4 +1/12\big(-2+a\big) \big(5 a^3+2 a^2 (-23+8 b)+2 a (71+b (-50+9 b))\\
+4 (-37+b (40+b (-15+2 b)))))/\big(3-a-b\big)^2\big(2-a\big).
\end{multline*}

One checks that $
\frac{A_{(Y', D_{Y'})}(Z')}{S(W^{Y'}_{\bullet, \bullet}; Z')}>1
$ for the values $a$ and $b$ such that the pair $(X, D)$ is divisorially semistable. Hence by Propositions \ref{prop-formulas-for-beta} and \ref{thm-G-invariant}, divisorial stability implies K-polystability, and we are done in this case.

\

\textbf{Case} D7. 
\hypertarget{compute-p2-bundle-cases-vii}
We show that in this case the log Fano pair $(X, D)$ is divisorially unstable for any values of the parameters. We have 
\[
X = \mathbb{P}^1\times \mathbb{P}^2, \ \ \ \ \ \ D_1\sim 2H, \ \ \ \ \ \ D_2\sim F.
\] 
Put $D = aD_1 + bD_2$. Hence $L=-K_X - D \sim (3-2a)H + (2-b)F$ is ample. We note that $(X, D)$ has product type, that is, $X=Y\times Z$, $D = D_Y\boxtimes D_Z$ where $(Y, D_Y) = (\mathbb{P}^2, D_Y=aQ)$ and $Q$ is a smooth conic, and $(Z, D_Z=bP)$ where $P$ is a point. By \cite{Zhu20b} the pair $(X, D)$ is K-semistable (resp., K-polystable) if and only if both $(Y, D_Y)$ and $(Z, D_Z)$ are so. On the other hand, in \cite{Fu20a} it is proven that $(\mathbb{P}^2, aQ)$ is K-semistable for $a\leq 3/4$ and K-polystable for $a<3/4$, see Theorem~\ref{thm-2-dim}. Also, one easily checks that the pair $(Z, D_Z=bP)$ is K-unstable for $b>0$. Thus, the pair $(X, D)$ is K-semistable if and only if $a\leq 3/4, b=0$, and K-polystable if and only if $a<3/4, b=0$. However, if $b=0$ then $D$ is irreducible, which contradicts our assumption. Hence, we are done in this case.

\

\textbf{Case} D8. 
\hypertarget{compute-p2-bundle-cases-viii}
We show that in this case the log Fano pair $(X, D)$ is divisorially unstable for any values of the parameters. We start with the special case $k=n=0$.
We have: 
\[
X=\mathbb{P}^1\times\mathbb{P}^2, \ \ \ \ \ \ \Delta=D_1+D_2+D_3, \ \ \ \ \ \ D_1\sim H, \ \ \ \ \ \ D_2\sim H, \ \ \ \ \ \ D_3\sim F.
\] 
Put $D = aD_1 + bD_2 + cD_3$ for $0\leq a, b, c<1$ and $(a, b, c)\neq (0, 0, 0)$. Then 
$
L = -K_X - aD_1 - b D_2 - c D_3 \sim (3 - a - b)H + (2-c)F
$ is ample. Similarly to the case \hyperlink{compute-p2-bundle-cases-vii}{D7}, we note that this pair is of product type. Hence, using \cite{Fu20a} we conclude that it is K-unstable. Now, consider the general case. We have
\[
X=\mathbb{P}_{\mathbb{P}^1}\big(\oo\oplus \oo(k)\oplus \oo(n)\big), \ \ \ \ \ n\geq 1, \ \ \ \ \ D_1\sim H-kF, \ \ \ \ \ D_2\sim H-nF, \ \ \ \ \ D_3\sim F.
\] 
Put $D = aD_1 + bD_2 + cD_3$ for $0\leq a, b, c <1$. Hence 
$
L=-K_X - D = \big(3-a-b\big)H + \big(2-\big(1-a\big)k-\big(1-b\big)n - c\big)F
$
is ample if and only if $\big(1-a\big)k+\big(1-b\big)n+c<2$. 
Note that $L-xD_2$
is nef if and only if it is pseudo-effective if and only if $x\leq 3-a-b$. 
Then
\begin{equation*}
\begin{split}
S'_{(X, D)}(D_2) =&\int_{0}^{3-a-b}{\bigg(\big(3-a-b-x\big)\big(H-nF\big) + \big(2-\big(1-a\big)k+n\big(2-a\big)-c\big)F\bigg)^3 dx} \\
=& \big(a+b-3\big)^4\big(k-2n\big)/4 - \big(a+b-3\big)^3\big(2-\big(1-a\big)k+n\big(2-a\big)-c\big).
\end{split}
\end{equation*}

We obtain
$
\beta'_{(X, D)}(D_2) 
= \frac{1}{4}\big(3-a-b\big)^2 \Big( \big(3-a-b\big)\big(k-2n\big) \big(1+a-3b\big) + 4\big(2-\big(1-a\big)k+n\big(2-a\big)-c\big) \big(a-2b\big)\Big).
$
Compute
\begin{equation*}
\begin{split}
S'_{(X, D)}(D_1)=&\int_{0}^{3-a-b}{\bigg(\big(3-a-b-x\big)\big(H-kF\big) + \big(2+\big(2-b\big)k-n\big(1-b\big)-c\big)F\bigg)^3 dx} \\
=&\big(a+b-3\big)^4\big(n-2k\big)/4 - \big(a+b-3\big)^3\big(2+\big(2-b\big)k-n\big(1-b\big)-c\big).
\end{split}
\end{equation*}

Then
$
\beta'_{(X, D)}(D_1)
=\big(3-a-b\big)^3\big(n-2k\big) \big(1-3a+b\big)/4 + \big(3-a-b\big)^2\big(2+\big(2-b\big)k-n\big(1-b\big)-c\big) \big(b-2a\big).
$
We estimate the following expression using $\big(1-a\big)k+\big(1-b\big)n+c<2$, $k\geq 0$, $n\geq 1$ and $b(1-b)\leq 1/2$:
\begin{multline*}
\frac{4(\beta'_{(X, D)}(D_1)+\beta'_{(X, D)}(D_2))}{(3-a-b)^2}
= k \Big( - \big(1 - 3 a - b\big)^2 - 4\big(2 - b - b^2\big) \Big)/3 \\+ 
n \Big( \big(1 + a - b \big)^2 
- 4 \big(1 - b + b^2\big) \Big) - 8 \big(a+b\big) + 4c \big( a + b \big) \\
\leq  
n \Big( \big(1-b\big)^2+2a \big( 1-b \big)+a^2 - 4 \big(1 - b + b^2\big) \Big) - 8 \big(a+b\big) + 4c \big( a + b \big) \\
< 
\Big(2-\big(1-a\big)k - c \Big)\big(1-b+2a\big) + n \Big(a^2 - 4+ 4b\big(1-b\big)\Big) - 8\big(a+b\big) + 4c \big( a + b \big) \\
\leq  
2\big(1-b+2a\big) - \Big(\big(1-a\big)k - c \Big) \big(1-b+2a\big) + n \big(a^2 - 2\big) - 4\big(a+b\big) + \big(4c-4\big) \big( a + b \big) \\
\leq 
b\big(-10+5c\big) - c +a\big(a - 1\big) + a\big(2c-3\big)<0. 
\end{multline*}

Hence, we are done in this case.

\section{Quadric bundles over $\mathbb{P}^1$}
\label{section-quadric-bundles}
\label{quadric_section}
In this section, we consider log smooth log Fano pairs $(X, \Delta)$ which have an extremal contraction of fiber type, whose generic fiber is a smooth quadric, as in \cite[9.2]{M83}. Since the contraction is extremal, $\Delta$ is proportional to $K_X$ over the base, so $\Delta$ has bidigree $(1,1)$ on the generic fiber which is a smooth quadric. 
There exists an embedding of $X$ into 
\[
W=\mathbb{P}_{\mathbb{P}^1}(\oo\oplus \oo(k) \oplus \oo(n) \oplus \oo(m)), \ \ \ \ \  0\leq k \leq n \leq m.
\]

One has $X \sim 2H + dF$, $\Delta \sim H + eF$ for some integers $d$ and $e$ where $F$ is a fiber of the projection and $H$ is the tautological divisor on $W$. We prove the following 

\begin{proposition}
\label{prop-do-not-exist}
There exists a unique series of three-dimensional log smooth log Fano pair $(X, \Delta=D_1+D_2)$ with an extremal quadric bundle structure and reducible boundary: 
\[
X\subset W=\mathbb{P}_{\mathbb{P}^1}(\oo\oplus\oo\oplus\oo\oplus\oo(m)), \ \ \ \ X\sim 2H, \ \ \ \ D_1\sim H-mF, \ \ \ \  D_2\sim F, \ \ \ \ m\geq 1.
\]
The corresponding log Fano pair of Maeda type $(X, D)$ is K-semistable for $m=1$ and some values of parameters $a$ and $b$. 
\end{proposition}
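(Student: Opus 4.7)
The proof naturally splits into a classification part (the family is unique) and a K-stability part (K-semistability at $m=1$ in a range of parameters, K-instability otherwise).

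For the uniqueness, the plan is to follow Maeda's case analysis in \cite[Section 9.2]{M83} under the extra assumption that $\Delta$ is reducible. Writing $\Delta = (H + eF)|_X$, any splitting $\Delta = D_1 + D_2$ must respect the irreducibility of $F|_X$, so up to relabeling one component is a sum of fibers and the other is the residual section-type divisor cut out by $\oo_W(H + (e-1)F)$. Going through the Maeda list and imposing the conditions that both components are irreducible and effective, that $X$ and $X \cap D_i$ are smooth, and that $-K_X - \Delta$ is ample, eliminates every candidate except the stated family. The key numerical constraints will force $k = n = 0$ (otherwise the residual component fails to be effective in a way compatible with the ampleness of $-K_X - \Delta$ and reducibility), and $m \geq 1$ (the $m=0$ case either reduces to a $\mathbb{P}^1$-bundle case already handled, or fails log Fano). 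This part is essentially bookkeeping.

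For the K-stability analysis at $m = 1$, first I would set up intersection theory on $X$ using the push-pull formula on $W$ (the intersection numbers of $H, F$ on $W$ are standard), adjunction for $K_X = (K_W + X)|_X$, and then describe $\overline{\mathrm{NE}}(X)$, $\mathrm{Nef}(X)$, $\overline{\mathrm{Eff}}(X)$. A necessary check is $\overline{\mathrm{Mov}}(X) = \mathrm{Nef}(X)$, needed to invoke Proposition \ref{prop-formulas-for-beta}. With $L = -K_X - aD_1 - bD_2$ ample in a suitable region of $(a,b)$, I would compute $S'_{(X,D)}(D_i)$ by integrating $\mathrm{vol}(L - xD_i)$ over the pseudo-effective interval, split into a nef part and a part where a Zariski decomposition is needed (when $L - xD_i$ leaves the nef cone). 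This is analogous to the computations in the $\mathbb{P}^2$-bundle sections. Then identify the parameter region where $\beta'_{(X, D)}(D_1)$ and $\beta'_{(X, D)}(D_2)$ are both nonnegative, and handle the other boundary divisor of the effective cone (a fiber-type direction analogous to $E \sim H - F$ in earlier cases) by the same integration.

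To upgrade divisorial semistability to K-semistability, the plan is to determine $G = \mathrm{Aut}(X, \Delta)$ as in Lemma \ref{lem-aut-product}: one expects a reductive subgroup acting on $X$ via the fiberwise quadric structure and the splitting of $\oo \oplus \oo \oplus \oo \oplus \oo(1)$. The $G$-invariant subvarieties of codimension $\geq 2$ should reduce to a short list of curves sitting in $D_1$ or $D_2$, and for each of them I would apply Proposition \ref{prop-formulas-for-beta} with $Y = D_i$ and $Z$ the invariant curve, computing $S(W^Y_{\bullet, \bullet}; Z)$ from the Zariski decomposition of $L - uY$ restricted to $Y$. Combined with Theorem \ref{thm-G-invariant}, this yields K-semistability. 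For $m \geq 2$, the plan is to compute $\beta'_{(X, D)}(D_1)$ directly and show it is negative for all $0 \leq a, b < 1$, analogously to the $\mathbb{P}^2$-bundle cases \textbf{D2} and \textbf{D4}.

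The main obstacle I expect is the Zariski decomposition step: because the quadric bundle $X$ sits inside a $4$-dimensional ambient space, controlling $(L - xY)|_Y$ and its positive part on the Hirzebruch-type surface $Y$ (especially tracking when the $(-m)$-curve enters the negative part) is more delicate than in the $\mathbb{P}^2$-bundle cases, and the extremal bound $m=1$ is what makes the inequalities go through.
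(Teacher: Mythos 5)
Your K-stability half is essentially the paper's proof: the same intersection-theoretic setup (nef cone spanned by $H$ and $F$, pseudo-effective cone by $E\sim H-mF$ and $F$, so $\overline{\mathrm{Mov}}(X)=\mathrm{Nef}(X)$ holds because both extremal contractions are divisorial or of fiber type), the same $\beta'$-computations with a Zariski decomposition for $L-xD_2$, the reduction of divisorial stability to finitely many classes via Fujita's Lemma 9.1/Remark 9.6, the identification of $G=\mathrm{Aut}(X,\Delta)\simeq\mathrm{PGL}(2,\mathbb{C})$ with $Z=D_1\cap D_2$ as the only proper $G$-invariant center besides the $D_i$, and the passage to K-(poly)stability through Proposition \ref{prop-formulas-for-beta} and Theorem \ref{thm-G-invariant}. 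One slip: for $m\geq 2$ it is $\beta'_{(X,D)}(D_2)$ (the fiber) that is negative for all $0\leq a,b<1$, namely $\beta'_{(X,D)}(D_2)=-(2-a)^2\bigl(-6+12b-6b^2+(4+4a+3a^2)m^2\bigr)/2$, whereas $\beta'_{(X,D)}(D_1)$ stays nonnegative at, say, $a=0$; this does not affect the statement, which only asserts semistability for $m=1$, but your stated plan to show $\beta'_{(X,D)}(D_1)<0$ would fail.

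The genuine gap is in the classification step. You propose to ``go through the Maeda list'' in \cite[9.2]{M83} and eliminate candidates, but the point of this section of the paper is precisely that Maeda's list for quadric bundles is unreliable: some cases are missing and some listed cases do not occur (for instance, in Maeda's notation for 9.2(7) only $e=0,1$ occur while $e=2$ does not). Verifying the entries of an incomplete list cannot prove uniqueness, since an omitted family would never be examined. The paper instead re-derives the classification: Proposition \ref{prop-quadric-bundles-irreducible-classification} classifies all pairs with \emph{irreducible} boundary and extremal quadric bundle structure from scratch (ampleness tested against the curve $C$ and surface $S$ inside $W$ given by \eqref{curve-C} and \eqref{surface-S}, base-locus and smoothness arguments for $|2H+dF|$, and Picard-rank checks to exclude non-extremal configurations), and the reducible case is then reduced to it by observing that $D_2$ is a nef fiber, so $-K_X-D_1=(-K_X-\Delta)+D_2$ is ample and $(X,D_1)$ must appear in that irreducible-boundary list; scanning it leaves only $X\sim 2H$ in $\mathbb{P}_{\mathbb{P}^1}(\oo\oplus\oo\oplus\oo\oplus\oo(m))$ with $D_1\sim H-mF$, $D_2\sim F$, $m\geq1$. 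Your argument needs either this reduction or an independent complete case analysis; bookkeeping over Maeda's table as given is not enough, and the analysis is in any case more than bookkeeping.
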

 
To prove the proposition, we reproduce the classification given in \cite[9.2]{M83}. However, we show that in \cite{M83} some cases are missing and some cases in fact do not occur. First, in Proposition \ref{prop-quadric-bundles-irreducible-classification} we obtain the classification assuming that $\Delta$ is irreducible. After that, in Proposition \ref{prop-unique-quadric-bundle} we consider the case of reducible $\Delta$.


\subsection{Irreducible boundary} 

\begin{proposition}
\label{prop-quadric-bundles-irreducible-classification}
Let $(X, \Delta)$ be a log smooth log Fano pair with $\dim X = 3$ and irreducible~$\Delta$. Assume that $X$ admits an extremal quadric bundle structure. Then there are the following possibilities for $(X, \Delta)$. 
\begin{enumerate}
\item
$X\subset W = \mathbb{P}_{\mathbb{P}^1}(\oo\oplus\oo\oplus\oo\oplus\oo(m)), \quad X\sim 2H+2F, \quad \Delta\sim H-mF, \quad m\geq1.$

\item
$X\subset W = \mathbb{P}_{\mathbb{P}^1}(\oo\oplus\oo\oplus\oo(1)\oplus\oo(m)), \quad X\sim 2H+F, \quad \Delta\sim H-mF, \quad m\geq1.$

\item
$X\subset W = \mathbb{P}_{\mathbb{P}^1}(\oo\oplus\oo\oplus\oo\oplus\oo(m)), \quad X\sim 2H+F, \quad \Delta\sim H-mF, \quad m\geq0.$

If $m=0$, then $X$ is a Fano threefold $2.25$.

\item
$X\subset W = \mathbb{P}_{\mathbb{P}^1}(\oo\oplus\oo(1)\oplus\oo(1)\oplus\oo(m)), \quad X\sim 2H+F, \quad \Delta\sim H-mF, \quad m\geq1.$

\item
$X\subset W = \mathbb{P}_{\mathbb{P}^1}(\oo\oplus\oo(1)\oplus\oo(1)\oplus\oo(m)), \quad X\sim 2H, \quad \Delta\sim H-mF, \quad m\geq1.$

If $m=1$, then $X$ is a Fano threefold $2.18$.

\item
$X\subset W = \mathbb{P}_{\mathbb{P}^1}(\oo\oplus\oo\oplus\oo(1)\oplus\oo(m)), \quad X\sim 2H, \quad \Delta\sim H-mF, \quad m\geq1.$

\item
$X\subset W = \mathbb{P}_{\mathbb{P}^1}(\oo\oplus\oo\oplus\oo\oplus\oo(m)), \quad X\sim 2H, \quad \Delta\sim H-mF, \quad m\geq1.$

\item
$X\subset W = \mathbb{P}_{\mathbb{P}^1}(\oo\oplus\oo\oplus\oo\oplus\oo(1)), \quad X\sim 2H, \quad \Delta\sim H, \quad m\geq1.$

If $m=1$, then $X$ is a Fano threefold $2.29$.


\item
$X \subset W = \mathbb{P}_{\mathbb{P}^1}(\oo\oplus \oo(1) \oplus \oo(1) \oplus \oo(m)), \quad X \sim 2H - F, \quad \Delta \sim H-mF, \quad m\geq 1$.

If $m=1$, then $X$ is a weak Fano threefold $2.3.2$.

\item
$X \subset W = \mathbb{P}_{\mathbb{P}^1}(\oo\oplus \oo(1) \oplus \oo(2) \oplus \oo(m)), \quad X \sim 2H - 2F, \quad \Delta \sim H-mF, \quad m\geq 2$,

\item
$X \subset W = \mathbb{P}_{\mathbb{P}^1}(\oo\oplus \oo(1) \oplus \oo(1) \oplus \oo(2)), \quad X \sim 2H - 2F, \quad \Delta \sim H-F$.
\end{enumerate}

Numbering of Fano varieties is as in \cite[12.3]{IP99}, and numbering for a weak Fano variety is as in \cite[2.3]{Ta09}.
\end{proposition}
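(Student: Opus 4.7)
The plan is to reproduce (and correct) Maeda's classification by working directly with the ambient $\mathbb{P}^3$-bundle $W = \mathbb{P}_{\mathbb{P}^1}(\oo \oplus \oo(k) \oplus \oo(n) \oplus \oo(m))$, $0 \le k \le n \le m$, and systematically enumerating triples $(k,n,m)$ together with the integers $d,e$ subject to the geometric constraints. The key starting point is that $K_W \sim -4H + (k+n+m-2)F$, so adjunction for $X \sim 2H + dF$ gives
\[
K_X \sim \bigl(-2H + (k+n+m-2+d)F\bigr)\big|_X, \qquad -K_X-\Delta \sim \bigl(H - c F\bigr)\big|_X,
\]
with $c := k+n+m+d+e-2$. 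Since the contraction $\pi\colon X \to \mathbb{P}^1$ is extremal, $\rho(X)=2$ and the nef cone of $X$ is determined by $F|_X$ together with the restriction of the semi-ample divisor $H - mF$ from $W$. Ampleness of $-K_X-\Delta$ therefore translates into a linear inequality between $c$ and $m$, together with a positivity condition on $H|_X$.

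First I would enumerate the admissible range of $d$. The ampleness of $-K_X$ (as $X$ is $\mathbb{Q}$-factorial with a Mori fiber space structure) together with the restriction that $-K_X - \Delta$ also be ample forces a narrow range; cross-referencing with the relative anticanonical degree on a quadric fiber (which is $2$) gives $d \in \{-2,-1,0,1,2\}$ after accounting for the twist by $(k+n+m)$. For each such $d$ I would tabulate the possible $(k,n,m)$ for which a general $X \in |2H+dF|$ is smooth, using the pushforward
\[
\pi_* \oo_W(2H + dF) = \mathrm{Sym}^2\bigl(\oo \oplus \oo(k) \oplus \oo(n) \oplus \oo(m)\bigr)\otimes \oo(d)
\]
to identify the base locus of the linear system; smoothness fails precisely when the section $\sigma_\infty$ corresponding to $\oo(m)$ forces the quadric equation to become degenerate in the last coordinate, which cuts out additional inequalities among $k,n,m,d$.

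Second, for each surviving $(k,n,m,d)$ I would determine the values of $e$ for which a smooth irreducible $\Delta \in |(H+eF)|_X|$ exists, using
$H^0(W, H+eF) = H^0\bigl(\mathbb{P}^1,\, \oo(e) \oplus \oo(k+e) \oplus \oo(n+e) \oplus \oo(m+e)\bigr)$, Bertini on the mobile part, and the vanishing locus near $\sigma_\infty$. Requiring $\Delta$ to be irreducible (the hypothesis of the proposition) eliminates several candidates that split as a sum of vertical and horizontal components. The condition that $-K_X-\Delta$ is ample, i.e.\ $c < 0$ or $|c| < m$ in the appropriate chamber, then pins down $e$ as a function of $(k,n,m,d)$, which in Maeda's form recovers the normalisation $e = -m$ (so $\Delta \sim H-mF$) in all but one case. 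This is precisely what yields the list (i)--(xi).

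The main obstacle will be the smoothness and irreducibility bookkeeping in the low-twist cases (those with $d < 0$, items (ix)--(xi)): here $2H+dF$ is not globally generated on $W$ and one has to argue on $X$ itself, using that $X$ avoids the indeterminacy when the general quadric equation has its rank concentrated away from $\sigma_\infty$. Identifying which Maeda tuples correspond to varieties already appearing in the Mori--Mukai list (the Fano threefolds $2.18, 2.25, 2.29$ and the weak Fano $2.3.2$) requires computing $(-K_X)^3$ and $\rho(X)$ in each borderline case, which I would do by direct intersection on $W$. The cases Maeda listed that fail one of these tests are the ones to be excluded as nonexistent, which is how the author arrives at the precise statement.
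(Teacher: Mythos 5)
Your overall architecture --- work inside $W=\mathbb{P}_{\mathbb{P}^1}(\oo\oplus\oo(k)\oplus\oo(n)\oplus\oo(m))$, write $X\sim 2H+dF$, $\Delta\sim H+eF$, and enumerate $(k,n,m,d,e)$ subject to ampleness of $-K_X-\Delta$ together with smoothness and irreducibility checks --- is the same as the paper's. However, the two mechanisms you propose for producing the key numerical constraints do not work. First, $-K_X$ is not ample: a Mori fiber space structure only makes $-K_X$ relatively ample over $\mathbb{P}^1$, and the classification itself contains non-Fano members (the weak Fano $2.3.2$ in item (ix), and all families with $m$ large), so your bound on $d$ cannot be extracted from Fano-ness of $X$. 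Second, $H-mF$ is not nef (hence not semi-ample) on $W$: it has degree $-m<0$ on the section corresponding to the trivial summand, and on the realized threefolds it typically spans a boundary ray of the effective cone rather than of the nef cone (for instance, for $X\sim 2H$ in $\mathbb{P}_{\mathbb{P}^1}(\oo\oplus\oo\oplus\oo\oplus\oo(1))$ the nef cone of $X$ is spanned by $H$ and $F$, while $H-F$ is the class of the exceptional divisor). Consequently the proposed translation of ampleness of $-K_X-\Delta\sim (H-cF)|_X$ into ``a linear inequality between $c$ and $m$'' via that chamber description is not valid as stated.

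What actually makes the enumeration finite in the paper are three concrete positivity tests that are absent from your plan: (1) $(-K_\Delta)^2>0$, computed on $W$, which gives $8-2(k+n+m)-3d-2e>0$; (2) positivity of $-K_X-\Delta$ on the section curve $C$ cut out by $(H-kF)(H-nF)(H-mF)$ --- for $d<0$ one has $C\cdot X<0$, so $C\subset X$ automatically, and for $d\geq 0$ one uses either $C$ or the curve $X\cap S$ with $S=(H-nF)(H-mF)$ --- yielding $k+n+(m+e)+d\leq 1$, respectively $2n+2(m+e)+d\leq 3$; and (3) for $d<0$ the surjection $N_{C/W}\to N_{X/W}|_C$, which forces $m+d\geq 0$ (together with $m+e\geq 0$ from effectivity of $\Delta$). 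These inequalities, not ampleness of $-K_X$ or a nef-cone computation on $X$, are what cut the problem down to a short list; the remaining eliminations (base divisors in $|2H+dF|$, fiberwise-reducible or singular $X\cap\Delta$, and cases with $\rho(X)=3$ contradicting extremality) are in the same spirit as your pushforward/Bertini step and your $(-K_X)^3$, $\rho(X)$ identifications. Unless the two flawed steps are replaced by constraints of this kind, your enumeration does not get started, so as written the proposal has a genuine gap.
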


\begin{proof}

We consider several cases.

\

\textbf{Case $d\geq0$}.
Since $\Delta$ is effective, we have {$m+e\geq 0$}. Compute
\begin{equation}
\begin{split}
0<\ (-K_\Delta)^2 
=&\ ( H + ( 2 - k - n - m - d - e )F )^2 (2H+dF)(H+eF) \\
=&\ 8 -2 (k+n+m)-3d-2e.
\end{split}
\end{equation}

Hence
$
2k + 2n + 3d \leq 2 k + 2n + 2(m+e) + 3d \leq 7.
$
Assume that a curve $C=(H-kF)(H-nF)(H-mF)$ given by the exact sequence 
\begin{equation}
\label{curve-C}
0\to \oo(k)\oplus\oo(n)\oplus\oo(m) \to \oo \oplus \oo(k) \oplus \oo(n) \oplus \oo(m) \to \oo \to 0.
\end{equation}
is contained in $X$. Then by ampleness of $-K_X-\Delta$, we have
\begin{equation}
\begin{split}
\label{ineq-sum}
0<&\ C\cdot(-K_X-\Delta)
= C \cdot (2H + (2- k - n - m - d - e)F) 
= 2- k - n - m - d - e.
\end{split}
\end{equation}
Thus, $k + n + (m+e) + d \leq 1$. Otherwise, $X\cap S$ is a curve. Consider a surface $S=(H-nF)(H-mF)$ given by the exact sequence
\begin{equation}
\label{surface-S}
0\to \oo(n)\oplus\oo(m) \to \oo \oplus \oo \oplus \oo(n) \oplus \oo(m) \to \oo \oplus \oo \to 0.
\end{equation}
Consider the intersection $X\cap S$. It is non-empty: indeed, $S$ is a fiberwise line in $\mathbb{P}^3$, and $X$ is a fiberwise quadric. Also, $X\cap S$ is not the whole $S$, hence it is a curve. Thus, we have: 
\[
0<(-K_X-\Delta)\cdot S\cdot X 
= 4-d-2e-2m-2n,\\
\]
hence
\begin{equation}
\label{new-condition}
2n+d \leq 2n + 2(m+e) + d \leq 3. 
\end{equation}

Note that \eqref{new-condition} is a consequence of \eqref{ineq-sum}, so in any case \eqref{new-condition} holds. Thus, there are the following possibilities:
\begin{enumerate}
\item
$d=2$, $k=n=0$, $m+e=0$, $m\geq 1$. This case is realised.
\item
$d=1$, $k=0$, $n=1$, $m+e = 0$, $m\geq 1$. This case is realised. 

\item
$d=1$, $k=1$, $n=1$, $m+e = 0$, $m\geq 1$. This case is realised. 

\item
$d=1$, $k=0$, $n=0$, $m+e = 0$, $m\geq 0$. This case is realised. 

\item
$d=1$, $k=0$, $n=0$, $m+e = 1$, $m\geq1$. In this case $X$ contains a curve $C\subset X\cap (H-mF)$ as in \eqref{curve-C}, which contradicts \eqref{ineq-sum}. Hence, this case does not occur.

\item
$d=0$, $k=0$, $n=1$, $m+e = 0$, $m\geq 1$. This case is realised. 

\item
$d=0$, $k=1$, $n=1$, $m+e = 0$, $m\geq 1$. This case is realised. 

\item
$d=0$, $k=0$, $n=0$, $m+e = 0$, $m\geq 1$. This case is realised. 

\item
$d=0$, $k=0$, $n=0$, $m+e = 1$, $m\geq 1$. We have
If $m\geq 2$, then $\Delta$ is reducible, so we may assume that $m=1$. Then
\[
X\subset \mathbb{P}_{\mathbb{P}^1}(\oo\oplus\oo\oplus\oo\oplus\oo(1)), \quad X\sim 2H, \quad \Delta\sim H.
\]

In this case, $X$ is a Fano variety $2.29$. This case is realised.

\end{enumerate}

\textbf{Case $d<0$.}
Consider a curve $C\simeq \mathbb{P}^1\subset W$ as in \eqref{curve-C}.
Note that $C\subset X$ as $C\cdot X<0$. By ampleness of $-K_X-\Delta$ we have
\begin{equation}
\label{ineq-sum2}
\begin{split}
0<&\ C\cdot(-K_X-\Delta)
= C \cdot (2H + (2- k - n - m - d - e)F) 
= 2- k - n - m - d - e.
\end{split}
\end{equation}


The following sequence is exact: 
\[
N_{C/W} = \oo(-k)\oplus\oo(-n)\oplus\oo(-m)\longrightarrow N_{X/W}|_C = \oo(d) \longrightarrow 0.
\]
Thus, {$m+d\geq 0$}. Moreover, as above we have $m + e\geq 0$.

\

\textbf{Subcase $d<0$, $e\geq 0$}. 
From \eqref{ineq-sum2} it follows that
\[
k + n + (m+d) + e \leq 1,
\] 
and each summand in non-negative. We have the following possibilities:
\begin{enumerate}
\item
$m+d=0$, $k=0$, $n=0$, $0\leq e\leq1$. 
One checks that, since $m>0$, any element in the linear system $|2H-mF|$ contains a component linearly equivalent to $H-mF$. In particular, no element in this linear system is smooth. Hence this case does not occur. 

\

\item
$m+d=1$, $k=0$, $n=0$, $e=0$. 
Note that $d<0$ implies $m\geq 2$. Then similar argument as in the previous case implies a contradiction.

\

\item$m+d=0$, $k=0$, $n=1$, $e=0$.
Similar argument as in the previous case implies that $m= 1$ and $d= -1$.  We have
One checks that $X$ is Fano and $(-K_X)^3=48$, hence by the classification of Fano threefolds we have $X\simeq \mathbb{P}^1\times \mathbb{F}_1$, in particular $\rho(X)=3$. This contradicts to the assumption that the quadric bundle structure is extremal. Hence this case does not occur. 
\end{enumerate}
\

\textbf{Subcase $d<0$, $e<0$}. 
Consider cases:

\begin{enumerate}
\item
$k=n=0$. 
In this case, the linear system $|2H+dF|$ has base divisor which is equivalent to $H-mF$, hence no element of this linear system is smooth. Hence this case does not occur.

\

\item
$k=0$ and $n>0$. 
We have $S\subset \mathrm{Bs}|2F+dF|$, $S\subset \mathrm{Bs}\,|\Delta|$, where $S$ is a surface given by \eqref{surface-S}.
However, this is impossible, since in this case $X\cap \Delta$ contains a fiberwise line, and $X\cap \Delta$ is a fiberwise conic, so it is reducible in every fiber of $W\to \mathbb{P}^1$, so $X\cap \Delta$ is not smooth. Hence this case does not occur.

\

\item
$k>0$. 
Consider a curve $C$ 
as in \eqref{curve-C}. Compute $C\cdot X = H (H-nF) (H-mF) (2H+dF) 
=2k+d$. Note that if $2k+d<0$, then the surface $S$ as in \eqref{surface-S} is contained in $X\cap \Delta$, which is a contradiction as in the previous case. Hence $2k+d\geq0$. Then we have 
\begin{equation}
\label{eq-to-add}
(2k+d) + (m+e) \leq (k + n + d) + (m + e) \leq 1. 
\end{equation}

We have $1\leq 8-2(k+n+m)-3d-2e\leq 8$, hence $-k-n-m-3d/2-e\leq 0$. Combining this with \eqref{eq-to-add}, we get $d\geq -2$. So either $d=-1$ or $d=-2$. Consider cases:
\begin{enumerate}
\item
$d=-1$, $k+n+d=1$, $k=n=1$, $m+e=0$. This case is realised. 
 
\item
$d=-2$, $k+n+d=0$, $k=n=1$, $m+e=1$.

In this case, if $m=1$, then $X$ is singular along the curve $C$ as in \eqref{curve-C}. If $m\geq 3$, then $\Delta$ is reducible. Then these cases do not occur. If $m=2$, one checks that $\rho(X)=3$, which contradicts our assumption. Hence, this case does not occur.   

\item
$d=-2$, $k+n+d=0$, $k=n=1$, $m+e=0$. 
One checks that in this case $\Delta$ is singular. Hence this case does not occur.

\item
$d=-2$, $k+n+d=1$, $k=1$, $n=2$ $m+e=0$. This case is realised. 
\end{enumerate}
\end{enumerate}
We obtained all the cases as in the statement of the proposition, so the proof is complete.
\end{proof}

\subsection{Reducible boundary} 
Now we consider the case of irreducible boundary $\Delta$, and prove the following proposition.

\begin{proposition}
\label{prop-unique-quadric-bundle}
There exists a unique log smooth log Fano pair $(X, \Delta)$ with $\dim X=3$ and reducible $\Delta$, such that it admits an extremal quadric bundle: 
\end{proposition}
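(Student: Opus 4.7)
The plan is to mimic the case analysis of Proposition~\ref{prop-quadric-bundles-irreducible-classification} with $\Delta$ now permitted to be reducible, then pin down the numerical data and verify that a single configuration survives.

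First, since $\pi\colon X\to\mathbb{P}^1$ is an extremal quadric bundle, $\rho(X)=2$ and $\mathrm{Pic}(X)$ is freely generated by the restrictions $H|_X,F|_X$ from the ambient $\mathbb{P}^3$-bundle $W=\mathbb{P}_{\mathbb{P}^1}(E)$. In each case of the previous classification the boundary class $[\Delta]$ has ``$H$-coefficient one'', so a decomposition $\Delta=D_1+D_2$ into prime effective divisors must be of the form $[D_1]=H+b_1F$, $[D_2]=b_2F$; primality of $D_2$ forces $b_2=1$, so $D_2$ is a single fibre.

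Second, I will use the cohomological identity $H^0(W,\mathcal{O}(H+b_1F))=H^0(\mathbb{P}^1,E(b_1))$ to pin down $b_1$: a prime effective divisor of that class on $X$ exists and is irreducible only when $b_1=-m$, where $m$ is the largest twist in the splitting of $E$, since only then does $E(b_1)$ have a one-dimensional space of global sections, cutting out the unique divisor $\Sigma\sim H-mF$ in $W$ (the trivial sub-bundle $\mathbb{P}_{\mathbb{P}^1}(\mathcal{O}^{\oplus 3})$). Thus $[D_1]=H-mF$ and $[\Delta]=H-(m-1)F$, and the candidate decomposition is $\Delta=(\Sigma\cap X)+F_{t_0}$.

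Third, I will feed this numerical type back into the intersection-theoretic inequalities derived in the proof of Proposition~\ref{prop-quadric-bundles-irreducible-classification} (intersection of $-K_X-\Delta$ with the fibre, with the minimal-section curve $C=\prod_i(H-a_iF)$ in $\Sigma$, and with the surface $S\cdot X$). Together with the requirement that $X$ be smooth and $-K_X-\Delta$ ample, these pin down $k=n=0$, $d=0$, and single out the bundle $W=\mathbb{P}_{\mathbb{P}^1}(\mathcal{O}\oplus\mathcal{O}\oplus\mathcal{O}\oplus\mathcal{O}(m))$ with $X\sim 2H$; running through the ampleness bookkeeping as in the previous proposition leaves exactly the pair of Table~$1$, namely $m=1$ and $X$ equal to the Fano threefold $\mathrm{no.}\,2.29$ of \cite[12.3]{IP99}.

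Fourth, I will verify the resulting pair is log smooth log Fano. For a generic $X\in|2H|$ the divisor $D_1=\Sigma\cap X$ is a generic bidegree $(0,2)$ divisor on $\Sigma\simeq\mathbb{P}^1\times\mathbb{P}^2$, hence $\mathbb{P}^1\times C\simeq\mathbb{P}^1\times\mathbb{P}^1$ for a smooth conic $C\subset\mathbb{P}^2$; $D_2\simeq\mathbb{P}^1\times\mathbb{P}^1$ is a smooth fibre; and $D_1\cap D_2=\{t_0\}\times C$ is a smooth rational curve along which $D_1$ and $D_2$ meet transversally. A short computation using $K_W=-4H-F$, $K_X=(-2H-F)|_X$ gives $-K_X-\Delta\sim(H+F)|_X$, which is strictly positive on the two Mori-cone generators of $X$ (a ruling line in a fibre and a minimal section $\sigma\subset D_1$), hence ample by Kleiman's criterion.

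The main obstacle is the cohomological/effectivity argument of the second paragraph: reducing irreducibility of $D_1$ to one-dimensionality of $H^0(\mathbb{P}^1,E(-m))$ is what collapses the potential ambiguity in the decomposition, and chasing this through the numerical inequalities of Proposition~\ref{prop-quadric-bundles-irreducible-classification} is what forces the whole configuration down to the single pair of Table~$1$.
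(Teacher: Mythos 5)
Your reduction to $\Delta=D_1+D_2$ with $D_2$ a fibre and $[D_1]=H+b_1F$ matches the paper, but the step that pins down $b_1$ is wrong. You claim that a prime divisor of class $H+b_1F$ on $X$ exists only when $b_1=-m$, because only then is $h^0(\mathbb{P}^1,E(b_1))=1$. Irreducibility of a member of a linear system is not equivalent to the system being zero-dimensional: the paper's own Proposition \ref{prop-quadric-bundles-irreducible-classification} contains case (8), where $W=\mathbb{P}_{\mathbb{P}^1}(\oo\oplus\oo\oplus\oo\oplus\oo(1))$, $X\sim 2H$ and $\Delta\sim H$ is irreducible even though $h^0(E)=4$. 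So classes with $b_1>-m$ cannot be discarded by your cohomological criterion, and your subsequent "numerical bookkeeping" is only run for the single class you (incorrectly) retained. The paper closes this gap differently: since $D_2$ is nef, $-K_X-D_1=(-K_X-\Delta)+D_2$ is ample, so $(X,D_1)$ is itself one of the eleven pairs of Proposition \ref{prop-quadric-bundles-irreducible-classification}; one then checks case by case for which of those eleven the divisor $-K_X-D_1-F$ remains ample (e.g.\ in case (8) it equals $H$, which is trivial on the rulings of $X\cap\mathbb{P}(\oo^{\oplus3})$, so that case is excluded). Some such ampleness check for every admissible class of $D_1$ is unavoidable, and your plan does not contain it.

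Second, your conclusion is too strong: the computation you describe in your last step gives $-K_X-\Delta\sim H+F$ on $X\sim 2H\subset\mathbb{P}_{\mathbb{P}^1}(\oo\oplus\oo\oplus\oo\oplus\oo(m))$, and $H+F$ is positive on both extremal rays of $\overline{\mathrm{NE}}(X)$ (the rulings of the quadric fibres and the rulings of $X\cap\mathbb{P}(\oo^{\oplus3})\simeq\mathbb{P}^1\times(\text{conic})$) for \emph{every} $m\geq1$, not just $m=1$. The outcome of the classification is therefore the whole series Q1 with $m\geq1$ (as recorded in Table 1 and as presupposed by the later statement that the pair is divisorially unstable for $m\geq2$); the value $m=1$ is singled out only afterwards by the K-stability analysis, not by the log Fano condition. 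Asserting that the ampleness bookkeeping "leaves exactly $m=1$" is therefore incorrect, and nothing in your own step 4 actually produces that restriction.
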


Assume that $(X, \Delta)$ is a log smooth log Fano pair of dimension $3$, $X$ admits an extremal quadric bundle, and $\Delta$ is reducible. Then it is easy to see that $\Delta=D_1+D_2$. We may assume that $D_1$ is ``horizontal'', and $D_2$ is a fiber of a quadric bundle. It follows that $-K_X - D_1 = -K_X-\Delta+D_2$, and since $D_2$ is nef, we have that $(X, D_1)$ is a among the cases listed in Proposition \ref{prop-quadric-bundles-irreducible-classification}. Examining these possibilities cases by case, we see that the only case that occurs is 
\[
X\subset W=\mathbb{P}_{\mathbb{P}^1}(\oo\oplus\oo\oplus\oo\oplus\oo(m)), \ \ \ \ X\sim 2H, \ \ \ \ D_1\sim H-mF, \ \ \ \  D_2\sim F, \ \ \ \ m\geq 1.
\]

We call this series of cases \hyperlink{compute-quadric-bundle-case-1}{Q1}.


\begin{proposition}
\hypertarget{compute-quadric-bundle-case-1}
Let $(X, D)$ be a log Fano pair of Maeda type, such that the corresponding pair $(X, \Delta=D_1+D_2)$ is of type \hyperlink{compute-quadric-bundle-case-1}{Q1}: 
\[
X\subset W=\mathbb{P}_{\mathbb{P}^1}(\oo\oplus\oo\oplus\oo\oplus\oo(m)), \ \ \ \ X\sim 2H, \ \ \ \ D_1\sim H-mF, \ \ \ \  D_2\sim F, \ \ \ \ m\geq 1.
\]
Then the pair $(X, D=aD_1+bD_2)$ is divisorially unstable for $m\geq 2$, and for $m=1$ the pair is K-semistable for $0\leq a\leq(\sqrt{10}-2)/3$ and $0\leq b\leq 1-\sqrt{4+4a+3a^2}/\sqrt{6}$, and K-polystable if the upper bounds are strict.
\end{proposition}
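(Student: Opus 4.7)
The strategy is to work directly on $X$ via the embedding $X \subset W$. Adjunction together with the Chern class identity $c(\ee) = 1 + mF$ for $\ee = \oo\oplus\oo\oplus\oo\oplus\oo(m)$ gives $H^4|_W = m$, so $H^3|_X = 2m$, $H^2 F|_X = 2$, $HF^2|_X = F^3|_X = 0$, and $-K_X = (2H + (2-m)F)|_X$. Writing $c := 2 - m(1-a) - b$, one has $L = -K_X - D = (2-a)H|_X + c\,F|_X$, which is ample precisely when $a < 1$ and $c > 0$. Both extremal contractions on $X$ are divisorial (for $m=1$, $X$ is the Fano threefold $2.29$), so $\overline{\mathrm{Mov}}(X) = \mathrm{Nef}(X) = \mathbb{R}_{\geq 0}[H|_X] + \mathbb{R}_{\geq 0}[F|_X]$ and $\overline{\mathrm{Eff}}(X) = \mathbb{R}_{\geq 0}[F|_X] + \mathbb{R}_{\geq 0}[D_1]$, placing us in the setting of Proposition~\ref{prop-formulas-for-beta}.

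Next I would compute $\beta'_{(X,D)}(D_1)$. Since $L - xD_1 = (2-a-x)H|_X + (c+mx)F|_X$ stays nef on its entire pseudo-effective range $[0, 2-a]$, no Zariski decomposition is required and a direct cubic integration produces
\begin{equation*}
\beta'_{(X,D)}(D_1) = (2-a)^2\bigl[-ma(2-a) + 2c(1-2a)\bigr].
\end{equation*}
For $m \geq 2$ I plan to show the bracket is strictly negative throughout the log-Fano region: when $a < 1/2$ the bracket is decreasing in $b$, so one checks at $b = 0$ that the resulting quadratic $-3ma^2 + 4(m-2)a + (4-2m)$ has discriminant $-8(m-2)(m+4) \leq 0$ and negative leading coefficient; when $a \geq 1/2$, both summands are non-positive with strict negativity on the first. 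This proves divisorial instability for every $m \geq 2$.

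For $m = 1$ the condition $\beta'_{(X,D)}(D_1) \geq 0$ rearranges to $b \leq (2-4a-3a^2)/(2(1-2a))$, which admits $b \geq 0$ precisely when $a \leq (\sqrt{10}-2)/3$. Turning to $D_2 = F$, the divisor $L - xD_2 = (2-a)H|_X + (c-x)F|_X$ leaves the nef cone at $x = c$ and the pseudo-effective cone at $x = 3-b$; on the interval $[c, 3-b]$ the Zariski decomposition is $P(x) = (3-b-x)H|_X$, $N(x) = (x-c)D_1$. Breaking the integral at $x = c$ and introducing $y = 1-b$, the condition $\beta'_{(X,D)}(D_2) \geq 0$ collapses to $y^2 \geq (4 + 4a + 3a^2)/6$, which is exactly the stated bound $b \leq 1 - \sqrt{4+4a+3a^2}/\sqrt{6}$. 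A direct comparison shows this bound is tighter than the $D_1$-bound, the two coinciding only at $a = (\sqrt{10}-2)/3$, so divisorial semistability holds in precisely the stated region.

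To upgrade to K-polystability under strict inequalities, the pair $(X, \Delta)$ admits a reductive automorphism group $G$ arising from the splitting $\ee = \oo^3 \oplus \oo(1)$ and the quadric structure of $X$, which preserves both $D_1$ and $D_2$. The $G$-invariant centres are $D_1$, $D_2$, the curve $D_1 \cap D_2$, and possibly a distinguished horizontal section of the quadric bundle. For each such curve $Z$ I would apply Proposition~\ref{prop-formulas-for-beta} with $Y = D_1$ or $Y = D_2$, compute $S(W^Y_{\bullet,\bullet}; Z)$ via the Zariski decompositions already obtained, and verify $A_{(Y,D_Y)}(Z) > S(W^Y_{\bullet,\bullet}; Z)$ throughout the interior of the region; Theorem~\ref{thm-G-invariant} then upgrades divisorial stability to K-polystability. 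The main obstacle is the Zariski-decomposition bookkeeping that produces the square-root term in the $D_2$-bound, and secondarily the identification and treatment of all $G$-invariant horizontal centres so as to rule out a more restrictive $\beta$-constraint.
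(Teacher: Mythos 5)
Your computational core follows the same route as the paper and is correct: you compute $\beta'_{(X,D)}(D_1)$ and $\beta'_{(X,D)}(D_2)$ directly on $X$, with the same thresholds, the same Zariski decomposition $P(x)=(3-b-x)H$, $N(x)=(x-c)D_1$ for $L-xD_2$ when $m=1$, and you arrive at exactly the inequalities $2-4a-3a^2\geq 2b(1-2a)$ and $6(1-b)^2\geq 4+4a+3a^2$ that cut out the stated region. For $m\geq 2$ your instability argument via $\beta'(D_1)$ is sound; note that you work with the correct canonical class $-K_X=(2H+(2-m)F)|_X$, whereas the paper's displayed $K_X\sim -2H-F$ is only valid at $m=1$, so your bracket differs from the paper's by $2(m-1)(2a-1)$ — your version is the right one, and your discriminant argument closes the case (the equality point $m=2$, $a=b=0$ lies outside the log Fano region, where $c=0$). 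One small slip: the two extremal contractions of $X$ are not both divisorial — one is the quadric bundle $X\to\mathbb{P}^1$ itself — though $\overline{\mathrm{Mov}}(X)=\mathrm{Nef}(X)$ still holds because neither contraction is small.

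There are, however, two genuine gaps. First, divisorial semistability on the stated region does not follow from $\beta'(D_1)\geq 0$ and $\beta'(D_2)\geq 0$ alone: one must control $\beta'$ for every prime divisor on $X$. The paper does this by additionally computing $\beta'_{(X,D)}(H)\geq 0$ and invoking Lemma 9.1 and Remark 9.6 of \cite{Fu16a} to reduce to finitely many classes; without such a reduction your claim that semistability ``holds in precisely the stated region'' is unsupported in the ``if'' direction. Second, the upgrade to K-semistability/K-polystability is only a plan in your write-up: you leave open both the identification of the $G$-invariant centres (``possibly a distinguished horizontal section'') and the actual Abban--Zhuang estimate, which is where the real work sits. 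The paper resolves precisely these points: it shows the automorphism group of $(X,\Delta)$ is $\mathrm{PGL}(2,\mathbb{C})$ (cf.\ \cite{PCS19}), so the only proper invariant centre besides $D_1,D_2$ is $Z=D_1\cap D_2$ (no extra horizontal centre survives), and then applies Proposition \ref{prop-formulas-for-beta} with $Y=D_1\simeq\mathbb{P}^1\times\mathbb{P}^1$, computing $S(W^Y_{\bullet,\bullet};Z)=(3-b)^4/288$ and hence $\delta_Z(X,D)\geq 288(1-b)/(3-b)^4>1$ on the region; Theorem \ref{thm-G-invariant} then gives K-polystability for strict inequalities and K-semistability on the boundary. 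Until you carry out the analogue of these two steps, the proposition is not proved.
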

\begin{proof}
Note that the nef cone of $X$ is generated by (restrictions of) $F$ and $H$, and these divisors are primitive in the Picard group. The cone of pseudo-effective divisors is generated by $F$ and $E\sim H-mF$, where $E$ is the exceptional divisor of the divisorial contraction on $X$ induced by the divisorial contraction on $W$. We have $K_W \sim -4H-(2-m)F$, $K_X \sim -2H-F = -2E-(2m+1)F$, hence $-K_X-\Delta \sim H+mF$ is ample. 

For $m=1$, we have the case 9.2(7) in \cite{M83}. One checks that in this case $X$ is a blow-up of a conic in a smooth quadric hypersurface in $\mathbb{P}^2$. In other words, $X$ is a Fano threefold $2.29$ as in \cite[12.3]{IP99}. Note that, in Maeda's notation, the cases $e=0$ and $e=1$ occur, while the case $e=2$ does not occur. We have $F^2=0, E^3 
=-4m, E^2F=
2$, 
$L = -K_X-D \sim 
(2-a)E+(2m+1-b)F$. Then 
$
L^3 = (2-a)^3E^3 + 3(2-a)^2(2m+1-b)E^2F = -4m(2-a)^3 + 6(2-a)^2(2m+1-b).
$
Compute $\beta_1'(D_1)$. We have $L-xD_1 = (2-a-x)E+(2m+1-b)F$. Then
\begin{multline*}
S'_{(X, D)}(D_1) 
= \int_0^{2-a}{ \Big( -4m\big(2-a-x\big)^3+6\big(2-a-x\big)^2\big(2m+1-b\big) \Big) dx} \\
= \big(2-a\big)^3 \big(2-2 b+\big(2+a\big) m\big).
\end{multline*}

Thus
$
\beta'_{(X, D)}(D_1) 
=\big(2-a\big)^2 \big(2-4 a-2 b+4 a b-3 a^2 m\big).
$
Now, we compute $\beta'_{(X, D)}(H)$. We have 
\begin{multline*}
S'_{(X, D)}(H) 
= \int_0^{2-a}{ \Big( -4m\big(2-a-x\big)^3+6\big(2-a-x\big)^2\big(2m+1-b-mx\big) \Big) dx} \\
=\big(2-a\big)^3 \big(4-4 b+(2+3 a\big) m\big)/2.
\end{multline*}

Then $\beta'_{(X, D)}(H) 
= \big(2-a\big)^2 \big(4-4 b+4 m+3 a^2 m+a \big(4-4 b+4 m\big)\big)/2\geq0$. 
Now, we compute $\beta'_{(X, D)}(D_2)$. We have $L-xD_2 \sim (2m+1-b-x)/m\ H + (-1-ma+b+x)/m\ E.$ 
We split $S_{(X, D)}(D_2)$ in two parts:
\begin{multline*}
\int_0^{1+am-b}{\mathrm{vol}(L-xD_2)dx}
= \int_0^{1+am-b}{ \Big(\big(2-a\big)E+\big(2m+1-b-x\big)F\Big)^3 dx} \\
= \int_0^{1+am-b}{ \Big( -4m\big(2-a\big)^3+6\big(2-a\big)^2\big(2m+1-b-x\big) \Big) dx} \\
=\big(2-a\big)^2 \Big(3+3 b^2+4 \big(1+a\big) m+a \big(4+a\big) m^2-2 b \big(3+2 \big(1+a\big) m\big)\Big).
\end{multline*}

Then
\[
\int_{1+am-b}^{2m+1-b}{\mathrm{vol}(L-xD_2)dx} = \int_{1+a-b}^{3-b}{(2m+1-b-x)^3/m^3\ H^3dx} 
= (2-a)^4m^2/2.
\]

We obtain 
$
\beta'_{(X, D)}(D_2) 
= -(2-a)^2 (-6+12 b-6 b^2+(4+4 a+3 a^2) m^2)/2.
$
This expression is negative once $m\geq 2$, hence we may assume $m=1$. If both $\beta'_{(X, D)}(D_1)\geq 0$ and $\beta'_{(X, D)}(D_2)\geq 0$, we have $0\leq a\leq(\sqrt{10}-2)/3$ and $0\leq b\leq 1-\sqrt{4+4a+3a^2}/\sqrt{6}$.

By remark Lemma 9.1 and Remark 9.6 in \cite{Fu16a} (they are formulated for the anticanonical polarisation, but work in our setting as well) we see that for such values of parameters the pair $(X, D)$ is divisorially semistable, and if the inequalities are strict, then it is divisorially stable.

To check K-polystability, we apply Abban-Zhuang theory as in Proposition \ref{prop-formulas-for-beta}. Put $Y=E$. Let $Z$ be the intersection $D_1\cap D_2$ which is a curve of bidegree $(1,1)$ on $D_2\sim F$, and a curve of bidegree $(1, 0)$ on $D_1=E$.

Let $G$ be the automorphism group of the pair $(X, \Delta)$. Then one can check that $G\simeq \mathrm{PGL}(2, \mathbb{C})$, cf. \cite[Lemma 7.7]{PCS19}. Note that $Z$ is the only proper $G$-invariant center on $X$ different from $D_i$. We have $Y\simeq \mathbb{P}^1\times \mathbb{P}^1$, $Y|_Y = (-1, 2)$, $F|_Y = (0, 1)$, and $Z\sim (1, 1)$ on $Y$. Compute
\begin{multline*}
S(W^Y_{\bullet, \bullet}; Z) 
= \frac{3}{L^3} \int_0^{\infty}\int_0^{\infty}{\mathrm{vol}\Big((L-uY)|_Y - v Z\Big)dv du} \\
= \frac{3}{L^3} \int_0^{\infty}\int_0^{3-2u-b}{\mathrm{vol}\Big(\big(-2+a+u-v, 7-2a-b-2u-v)\Big)dv du} \\
= \frac{3}{L^3} \int_{2-a}^{3-a-b/3}\int_0^{-2+a+u}{2\big(-2+a+u-v\big)\big(7-2a-b-2u-v\big)dv du} \\
+\frac{3}{L^3} \int_{3-a-b/3}^{7/2-a-b/2}\int_0^{7-2a-b-2u}{2\big(-2+a+u-v\big)\big(7-2a-b-2u-v\big)dv du} \\
=5 (3-b)^4/1944+7 (3-b)^4/7776=(3-b)^4/288.
\end{multline*}

Thus, we have 
$
\delta_Z(X, D)\geq 
\frac{A_{(Y, D_Y)}(Z)}{S(W^Y_{\bullet, \bullet}; Z)}= 288(1-b)/(3-b)^4>1
$
for such values $a$ and $b$ that the pair $(X, D)$ is divisorially stable. 
Hence by Propositions \ref{prop-formulas-for-beta} and \ref{thm-G-invariant}, in our case divisorial stability implies K-polystability. Hence, we are done in this case, and Proposition \ref{prop-do-not-exist} is proven.
\end{proof}

\section{$\mathbb{P}^1$-bundles over a surface}
\label{section-p1-bundles}
Let $(X, \Delta)$ be a log Fano threefold such that $X$ admits a $\mathbb{P}^1$-bundle structure $\pi\colon X\to Z$. 
According to \cite{M83}, if $\Delta$ is reducible, then the possibilities are presented among the cases \hyperlink{compute-p1-bundle-cases-remark}{C1}--\hyperlink{compute-p1-bundle-cases-9}{C10} in Table $1$. The main goal in this section is to prove the following

\begin{proposition}
\label{prop-unstable-p1-bundles}
Let $(X, D)$ be a log Fano pair of Maeda type such that $X$ admits an extremal $\mathbb{P}^1$-bundle. Assume that $D$ is reducible. Then $(X, D)$ is divisorially unstable, unless $(X, D)$ is as in the case \hyperlink{compute-p1-bundle-cases-8}{C9}. 
\end{proposition}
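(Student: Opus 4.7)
The plan is to proceed case by case through the list \hyperlink{compute-p1-bundle-cases-remark}{C1}--\hyperlink{compute-p1-bundle-cases-9}{C10}, skipping \hyperlink{compute-p1-bundle-cases-8}{C9} (which is the expected exception and is treated separately), in direct analogy with the proof of Proposition~\ref{prop-unstable-p2bundles}. For each case I would write $\pi\colon X=\mathbb{P}_Z(\mathcal{E})\to Z$ explicitly, record the classes of $D_1,D_2$ (and $D_3$ when $\Delta$ has three components) in terms of the tautological class $H$ and pullbacks from the base $Z$, and compute the intersection theory together with generators of $\mathrm{Nef}(X)$ and $\overline{\mathrm{Eff}}(X)$. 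This mirrors the setup already used in Sections \ref{sec-blow-up-of-quadric} and \ref{section-p2-bundles}.

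For each case, the goal is to exhibit a prime divisor $E$ on $X$ for which $\beta'_{(X,D)}(E)<0$ for every admissible value of the boundary coefficients $(c_1,\dots,c_k)$. Natural candidates to test are the components $D_i$, a section of $\pi$, and the pullback of an extremal curve class from $Z$. In several of the cases the MMP/Zariski decomposition for $L-xE$ is elementary: $L-xE$ remains nef until the ray hits a boundary of the nef cone, at which point the negative part is supported on a unique exceptional divisor, so the integral defining $S'_{(X,D)}(E)$ breaks into at most two closed-form pieces as in the computations for cases \hyperlink{compute-p2-bundle-cases-viii}{D8} and \hyperref[sec-blow-up-of-quadric]{E1}. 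Moreover, by Lemma~9.1 and Remark~9.6 in \cite{Fu16a} it suffices to check $\beta'$ on finitely many primitive classes bounding $\overline{\mathrm{Eff}}(X)$, which cuts down the case analysis considerably. For cases where $X$ is a product $Y\times Z$ with $D$ of product type, the result follows immediately from \cite{Zhu20b} combined with Theorem~\ref{thm-2-dim}, as was done for \hyperlink{compute-p2-bundle-cases-vii}{D7}--\hyperlink{compute-p2-bundle-cases-viii}{D8}.

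A convenient bookkeeping trick, already exploited in case \hyperlink{compute-p2-bundle-cases-ii}{D2}, is that in each case one can try to verify the identity $\sum_i \beta'_{(X,D)}(D_i) + \beta'_{(X,D)}(E_{\mathrm{aux}})=0$ for some explicit auxiliary divisor $E_{\mathrm{aux}}$ (typically a primitive class on the boundary of $\overline{\mathrm{Eff}}(X)$). If the resulting sum is identically zero, then either the pair is strictly divisorially unstable or all $\beta'$ values vanish; and the latter equality, expanded out, yields a polynomial constraint on the coefficients and the bundle invariants that is incompatible with the reducibility of $D$ — exactly as in \hyperlink{compute-p2-bundle-cases-ii}{D2}. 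In cases where such an identity is unavailable, I would bound $\beta'$ of a fixed component by estimating the final polynomial expression using the ampleness constraints (the inequalities defining the body of ample angles) and the integrality bounds on the bundle invariants $k,n,m\geq 0$.

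The main obstacle I expect is the bookkeeping: the cases differ in both the base surface $Z$ (ranging among $\mathbb{P}^2$, Hirzebruch surfaces, and log del Pezzo surfaces from \cite[3]{M83}) and in the splitting type of $\mathcal{E}$, so one has to keep careful track of the rays of $\overline{\mathrm{Eff}}(X)$ and of which region of the coefficient cube $(c_1,\dots,c_k)\in[0,1)^k$ is cut out by ampleness of $-K_X-D$. A secondary difficulty is distinguishing \hyperlink{compute-p1-bundle-cases-8}{C9} from the other cases at the level of the $\beta'$-computation: the calculations should be arranged so that the instability inequality degenerates precisely at the C9 geometry, paralleling the way the analysis of \hyperlink{compute-p2-bundle-cases-v}{D5} differed qualitatively from \hyperlink{compute-p2-bundle-cases-iv}{D4}. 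Once the negative divisor is identified in each remaining case, the conclusion follows directly from Definition~\ref{def-k-stability}.
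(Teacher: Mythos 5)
Your plan follows essentially the same route as the paper's proof: a case-by-case computation of $\beta'_{(X,D)}$ on boundary components (or an auxiliary effective divisor such as one spanning $\overline{\mathrm{Eff}}(X)$), using the explicit nef and pseudo-effective cones and Zariski decompositions, the product-type shortcut via \cite{Zhu20b} and Theorem \ref{thm-2-dim} for the $\mathbb{P}^1\times\mathbb{P}^1\times\mathbb{P}^1$ subcase, reductions of C1, C3, C4, C6, C7, C8 to the master computations for C2 and C5, and estimates of the resulting polynomials against the ampleness constraints on the coefficients. One small correction: the reduction to finitely many divisor classes via Lemma 9.1 and Remark 9.6 of \cite{Fu16a} is only needed when establishing divisorial \emph{semi}stability (as in case C9), not for exhibiting a single destabilizing divisor, so it does not shorten the instability cases as you suggest.
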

\hypertarget{compute-p1-bundle-cases-remark}
Note that the case \hyperlink{compute-p1-bundle-cases-remark}{C1} can be treated as a special case of \hyperlink{compute-p1-bundle-cases-ii}{C2} if we put $b=c$. Also, we may assume that $k\geq 1$, since otherwise we get $X=\mathbb{P}^2\times\mathbb{P}^1$, and we have considered this case in \hyperlink{compute-p2-bundle-cases-viii}{D8} for $k=n=0$ and $b=0$. Note that the case \hyperlink{compute-p1-bundle-cases-iii}{C3} for $k\geq 0$ can be considered as a special case of \hyperlink{compute-p1-bundle-cases-ii}{C2} with the value of parameter $c$ equal to $0$. 
Note that the case \hyperlink{compute-p1-bundle-cases-viii}{C7} can be treated as a special case of \hyperlink{compute-p1-bundle-cases-vii}{C5} if we put $n=0$ with $b=c$. Note that the case \hyperlink{compute-p1-bundle-cases-vii}{C8} can be treated as a special case of \hyperlink{compute-p1-bundle-cases-vii}{C5} if we put $n=1$ with $b=c$.
Now, we consider the remaining cases.

\

\textbf{Case} C2. \hypertarget{compute-p1-bundle-cases-ii}
We show that in this case the log Fano pair $(X, D)$ is divisorially unstable for any values of the parameters. We have
\begin{equation*}
\label{compute-p1-bundle-cases-ii}
X = \mathbb{P}_{\mathbb{P}^2}\big(\oo\oplus\oo(k)\big), \ \ \ \ \ \ k\geq 1, \ \ \ \ \ \ D_1\sim H, \ \ \ \ \ \ D_2\sim D_3\sim F, 
\end{equation*}

It is easy to check that
\begin{equation*}
\begin{split}
H^3 &= \big(H-kF\big)^3 =  k^2, \ \ \ \ \ \ \ \ \big(H-kF\big)^2F = -k, \ \ \ \ \ \ \ \ \big(H-kF\big)F^2 = 1, \\
\mathrm{Nef}(X) =&\ \mathbb{R}_{\geq 0} \big[H\big] + \mathbb{R}_{\geq 0} \big[F\big], \ \ \ \ \ \ \ 
\overline{\mathrm{Eff}}(X) = \mathbb{R}_{\geq 0} \big[H-kF\big] + \mathbb{R}_{\geq 0} \big[F\big].
\end{split}
\end{equation*}

Put $\Delta = aD_1 + bD_2 + cD_3$ for $0\leq a, b, c < 1$. 
Note that
$
L \sim -K_X - D \sim \big(2-a\big)\big(H-kF\big) + \big(3+k-b-c\big)F 
= \big(2-a\big)H + \big(3-k+ak-b-c\big)F
$
is ample if and only if $k+b+c<3+ak$. 
We estimate $\beta'_{(X, \Delta)}(D_1)$. Then
$
L - x D_1 \sim \big(2-a-x\big)\big(H-kF\big) + \big(3+k-b-c\big)F 
= \big(1 + \big(3-b-c\big)/k\big) H - \big(a-1+ \big(3-b-c\big)/k+x\big) \big(H-kF\big)
$
is nef if and only if it is pseudo-effective if and only if $x\leq 2-a$. We have $A_{(X, D)}(D_1) = 1-a$. Compute
\begin{multline*}
S'_{(X, D)}(D_1)=\int_{0}^{2-a}{\bigg(\big(1 + \big(3-b-c\big)/k\big) H - \big(a-1+ \big(3-b-c\big)/k+x\big) \big(H-kF\big)\bigg)^3 dx} \\ 
= \big(1/k\big) \big(k + 3-b-c\big)^3 \big( 2 - a \big) - 
\big(1/k^2\big) \big(k + \big(3-b-c\big)\big)^4/4 \\ 
+ \big(1/k^2\big) \big(ak - k + \big(3-b-c\big)\big)^4/4.
\end{multline*}

Finally, we obtain
\begin{multline*}
k\beta'_{(X, D)}(D_1) = \big(1-a\big)\big(k + 3-b-c\big)^3 - \big(1-a\big)\big(ak - k + 3 - b - c \big)^3 \\
- \big(k + 3-b-c\big)^3 \big( 2 - a \big) + \big(1/k\big) \big(k + 3-b-c\big)^4/4 
- \big(1/k\big) \big(ak - k + 3-b-c\big)^4/4 \\
<\big(k + 3-b-c\big)^3 \big(-3/4+\big(3-b-c\big)/4k\big)\leq0. 
\end{multline*}
since $k\geq 1$ and by the above $k+b+c<3+ak$, so $ak - k + 3 - b - c>0$. Hence, we are done in this case.

\

\textbf{Case} C3. 
\hypertarget{compute-p1-bundle-cases-iii}
We show that in this case the log Fano pair $(X, D)$ is divisorially unstable for any values of the parameters. We only have to consider the case $k=-1$. We will write it is follows: 
\begin{equation*}
\begin{split}
X=\mathbb{P}_{\mathbb{P}^2}(\oo\oplus \oo(1)), \ \ \ \ \ \ D_1\sim H, \ \ \ \ \ \ \ D_2\sim F.
\end{split}
\end{equation*}

Put $\Delta = aD_1 + bD_2$ for $0\leq a, b< 1$. Then $K_X \sim -2H - 2F$. Note that
$
L \sim -K_X - D \sim \big(2-a\big)H + \big(2-b\big)F
$
is ample. We estimate $\beta'_{(X, \Delta)}(E)$ where $E$ is a unique divisor equivalent to $H-F$. Then
$
L - x E \sim \big(2-a-x\big)H + \big(2-b+x\big)F
$
is nef if and only if it is pseudo-effective if and only if $x\leq 2-a$.  Compute
\begin{multline*}
\beta'_{(X, D)}(E) = \big(2-a\big)^3 + 3\big(2-a\big)^2\big(2-b\big) + 3\big(2-a\big)\big(2-b\big)^2 \\
- \int_0^{2-a}{\bigg(\big(2-a-x\big)^3 + 3\big(2-a-x\big)^2\big(2-b+x\big) + 3\big(2-a\big)\big(2-b+x\big)^2\bigg)dx} \\
= \big(2-a\big) \bigg(19 a^3+8 \big(-43+13 b\big)+2 a^2 \big(-87+16 b\big)+2 a \big(234-70 b+3 b^2\big)\bigg)/4 \\
\leq  \big(2-a\big) \bigg(19 a^3-240-142 a^2+468 a \bigg)/4
\end{multline*}
which is negative for $a>3/5$. On the other hand, for $a\leq 3/5$ we estimate 
\begin{multline*}
\beta'_{(X, D)}(E) = \big(2-a\big)\big(19 a^3 + 32 a^2 b - 174 a^2 + 6 a b^2 - 140 a b + 468 a + 104 b - 344\big)  \\
\leq\big(2-a\big)\big(19 a^3 + 32 a^2 b + 6 a b^2 - 140 a b + 468 a(1-174a/468) + 104 b - 344\big)  \\
\leq\big(2-a\big)\big(19 a^3 + 32 a^2 b + 6 a b^2 - 140 a b + 294 + 104 b - 344\big) \\
\leq\big(2-a\big)\big(450 b^2 + 3940 b - 5737\big)/125
\end{multline*}
which is negative for $0\leq b<1$. Hence, we are done in this case.


\

\textbf{Case} C4. \label{compute-p1-bundle-cases-iv}
\hypertarget{compute-p1-bundle-cases-iv}
We show that in this case the log Fano pair $(X, D)$ is divisorially unstable for any values of the parameters. We have 
\[
X=\mathbb{P}_{\mathbb{F}_n}(\oo\oplus \oo(ks+(kn+m)f)), \ \ \ \ \ k, n\geq 0, \ \ \ \ \ m\geq -1, \ \ \ \ \ D_1\sim H_1, \ \ \ \ \ D_2\sim F_s,
\] 
where $H_1\sim H - k F_s - ( k n + m )F_f$ for the tautological divisor $H$, and $F_s=\pi^{-1}(s)$ is the preimage of the $(-n)$-curve $s$ via the structure morphism $\pi\colon X\to \mathbb{F}_{n}$. Note that $D_2\simeq \mathbb{F}_{m}$ for $m\geq 0$, $D_2\sim \mathbb{F}_1$ for $m=-1$. 
Also, we have $(D_1\cap D_2)_{D_2}^2=-m$, $(D_1\cap D_2)_{D_1}^2=-n$.

Note that the computation in \cite[8.3]{M83} is not correct: it is claimed that only the case $k=n=0$ occurs. More precisely, in the Case $(i)$ at the top of page $115$, we should have (in Maeda's notation) $-K_V-D_1\sim D_1+(2+k)V_{\Delta}+(n+kn)V_M$. Then the equality after the formula for $-K_V-D_1$ simplifies as $8=8$ which does not give us any information about the possible values of $k=k$ and $n=n$. We claim that for arbitrary $k, n\geq0, m\geq -1$ the pair $(X, \Delta=D_1+D_2)$ is log Fano. In the Lemma \ref{lem-log-Fano-criterion2} we demonstrate that the pair $(X, \Delta)$ is a log smooth log Fano pair for any $k, n\geq 0, m\geq-1$. We will deal with this case as a special case of \hyperlink{compute-p1-bundle-cases-vii}{C5} for $c=0$.

\begin{lemma}
\label{lem-log-Fano-criterion2}
Let $(X, \Delta)$ be a log smooth pair as in the case \hyperlink{compute-p1-bundle-cases-iv}{C4}. Then it is log Fano for $k, n\geq 0$, $m\geq-1$. 
\end{lemma}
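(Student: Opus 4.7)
The plan is to compute $L := -K_X - \Delta$ explicitly on the $\mathbb{P}^1$-bundle and then reduce ampleness of $L$ on $X$ to a Nakai--Moishezon check on the base $\mathbb{F}_n$. Using the canonical bundle formula for the split projective bundle $\pi\colon X = \mathbb{P}_{\mathbb{F}_n}(\oo \oplus \oo(D)) \to \mathbb{F}_n$ with $D = ks + (kn+m)f$, one has $K_X = -2H + \pi^*(K_{\mathbb{F}_n} + D)$. Combined with $K_{\mathbb{F}_n} = -2s - (n+2)f$, this gives $-K_X = 2H + (2-k)F_s + (n+2-kn-m)F_f$. Since $\Delta = D_1 + D_2$ has class $H + (1-k)F_s - (kn+m)F_f$, subtraction produces the clean expression
\[
L \;=\; H + F_s + (n+2)F_f.
\]

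Next, I would invoke the standard ampleness criterion for divisors on a split projective bundle: on $\mathbb{P}(\oo \oplus \oo(D))$, a class of the form $H + \pi^*M$ is ample if and only if both $M$ and $M + D$ are ample on the base. This follows from $\mathrm{Sym}^\bullet(\oo \oplus \oo(D)) = \bigoplus \oo(iD)$ and the convexity of the ample cone, or alternatively by restricting $L$ to the two disjoint sections $D_1 \sim H - \pi^*D$ (the ``zero section'', on which $H|_{D_1} = 0$) and $\sigma_\infty \sim H$ (on which $H|_{\sigma_\infty} = D$) and combining with the trivial estimate $L \cdot (\text{fibre of } \pi) = 1 > 0$. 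In our setting $M = s + (n+2)f$, so the two classes to check on $\mathbb{F}_n$ are $L|_{D_1} = M$ itself and $L|_{\sigma_\infty} = M + D = (k+1)s + (kn+m+n+2)f$.

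Both are ample by Nakai--Moishezon on $\mathbb{F}_n$: a class $\alpha s + \beta f$ is ample iff $\alpha > 0$ and $\beta - \alpha n > 0$. For $M$ one gets $(\alpha,\, \beta - \alpha n) = (1,\, 2)$, and for $M + D$ one gets $(k+1,\, m+2)$; both pairs are strictly positive under the hypotheses $k \geq 0$ and $m \geq -1$. The main subtlety lies in fixing the convention for $\mathbb{P}(\mathcal{E})$ so that the paper's $D_1 \sim H - kF_s - (kn+m)F_f$ is correctly identified with the zero section rather than with $\sigma_\infty$. This identification is forced by the paper's own intersection profile $(D_1 \cap D_2)^2_{D_1} = -n$ and $(D_1 \cap D_2)^2_{D_2} = -m$, so the computation goes through without ambiguity for every $k, n \geq 0$ and $m \geq -1$.
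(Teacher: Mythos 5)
Your proposal is correct, and it takes a genuinely different route from the paper. The paper argues by restriction to the two boundary surfaces: it asserts that $\overline{\mathrm{NE}}(X)$ is generated by fibres and by curves lying on the ruled surfaces $D_1$ and $D_2=F_s$, and then checks that $(-K_X-\Delta)|_{D_1}=s+(2+n)f$ and $(-K_X-\Delta)|_{D_2}$ (computed separately for $m\geq 0$ and $m=-1$) are ample. You instead compute $L=-K_X-\Delta=H+\pi^*\bigl(s+(n+2)f\bigr)$ (which is consistent with the paper's expression $L=H_1+(1+k)F_s+(2+n+kn+m)F_f$, and with its restriction $L|_{D_1}=s+(2+n)f$) and invoke the ample-vector-bundle criterion: $H+\pi^*M$ is ample on $\mathbb{P}(\oo\oplus\oo(D))$ iff $\oo(M)\oplus\oo(M+D)$ is ample, i.e.\ iff $M$ and $M+D$ are ample on $\mathbb{F}_n$, which Nakai--Moishezon confirms via the pairs $(1,2)$ and $(k+1,m+2)$. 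This buys you a uniform argument (no case split at $m=-1$, no vertical surface $F_s$) and avoids the paper's unproved ``easy to see'' description of the Mori cone, at the cost of importing the projective-bundle ampleness criterion; your convention check that $D_1$ is the section with $H|_{D_1}=0$ (equivalently $D_1|_{D_1}=-ks-(kn+m)f$, as the paper itself records) is exactly the point where this must be pinned down, and you do so correctly. One caution: your parenthetical ``alternative'' justification -- ampleness of the restrictions to the two disjoint sections plus positivity on fibres -- is not by itself sufficient without knowing that these curves generate $\overline{\mathrm{NE}}(X)$ (which is precisely the input the paper asserts), so the argument should rest on the bundle criterion, as your main text does.
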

\begin{proof}
We have
$
-K_X - \Delta = D_1+(1+k)D_2+(2+n+kn+m)V_M, \ \ \ \ 
D_1|_{D_1} = -ks-(m+kn)f.
$
It is easy to see that $\overline{\mathrm{NE}}(X)$ is generated by fibers and sections on the ruled surfaces $D_1$ and $D_2$. Hence it is enough to check that the restrictios of $-K_X-\Delta$ to $D_1$ and $D_2$ are ample. We have 
$
-K_X - \Delta|_{D_1} 
= s + (2+n)f
$
which is ample on $D_1\simeq \mathbb{F}_n$. Note that $D_1|_{D_2}=s$ for $m\geq 0$, and $D_1|_{D_2}\sim s+f$ for $m=-1$. 
Hence for $m\geq 0$ we have
$
-K_X - \Delta|_{D_2} = s + ((1+k)(-n)+2+n+kn+m)f 
= s + (2+m)f
$
which is ample. For $m=-1$, we have 
$
-K_X - \Delta|_{D_2} = s + 2f,
$
which is ample.
\end{proof}



\textbf{Case} C5. \label{compute-p1-bundle-cases-vii}
\hypertarget{compute-p1-bundle-cases-vii}
We show that in this case the log Fano pair $(X, D)$ is divisorially unstable for any values of the parameters. We have 
\[
X=\mathbb{P}_{\mathbb{F}_n}\big(\oo\oplus \oo(ks+(m+kn)f)\big), \ \ \ \ k, n\geq 0, \ \ \ \ m\geq -2, \ \ \ \ D_1\sim H_1, \ \ \ \ D_2\sim F_s, \ \ \ \ D_3 \sim F_f,
\] 
where $H_1\sim H - k F_s - ( k n + m )F_f$ for the tautological divisor $H$, and $F_s=\pi^{-1}(s)$ is the preimage of the $(-n)$-curve $s$ via the structure morphism $\pi\colon X\to \mathbb{F}_{n}$, and $F_f = \pi^{-1}(f)$ is the preimage of a fiber on 
$\mathbb{F}_{n}$. We have
$
K_X \sim -2H_1 - \big(2+k\big)F_s - \big(2+n+kn+m\big)F_f. 
$
Also, $D_1\simeq \mathbb{F}_{n}$, $D_3\simeq \mathbb{F}_{k}$, $D_2\simeq \mathbb{F}_{m}$ for $m\geq 0$, $F_s\simeq \mathbb{F}_1$ for $m=-1$ and $F_s\simeq \mathbb{F}_2$ for $m=-2$. 
We have ${H_1}|_{H_1} = - k s - \big(kn+m\big)f$. Compute the intersection theory:
\begin{equation*} 
\begin{split}
H_1^3 = n k^2 +& 2k m, \ \ \ \ \ \ \ \ H_1 F_s F_f = 1, \ \ \ \ \ \ \ \ F_f^2 = 0, \ \ \ \ \ \ \ \ F_s^2 H_1 = -n, \\
F_s^2 F_f =& 0, \ \ \ \ \ \ \ \ F_s^3 = 0, \ \ \ \ \ \ \ \ H_1^2 F_f = -k, \ \ \ \ \ \ \ \ H_1^2 F_s = -m.
\end{split}
\end{equation*}

Put $D = aD_1 + bD_2 + cD_3$. Then
$
L \sim -K_X - D = \big(2-a\big)H_1 + \big(2+k-b\big)F_s + \big(2+n+k n+m-c\big)F_f.
$
One checks that $\overline{\mathrm{Eff}}(X)=\mathbb{R}_{\geq0}\big[H_1\big] + \mathbb{R}_{\geq0}\big[F_s\big] + \mathbb{R}_{\geq0}\big[F_f\big]$ and 
\[
\mathrm{Nef}(X)=\mathbb{R}_{\geq0}\big[F_f\big] + \mathbb{R}_{\geq0}\big[F_s+n F_f\big] + \mathbb{R}_{\geq0}\big[H_1 + k F_s + ( k n + m )F_f\big].
\]
Then the pair $(X, D)$ is log Fano if and only if the following conditions hold:
$k(1-a) <2-b$, and $m(1-a) + n(1-b) < 2-c$.
Compute 
\begin{multline*}
L^3 
=\big(2-a\big)^3 \big(nk^2 + 2km\big) + 6 \big(2-a\big)\big(2+k-b\big)\big(2+n+kn+m-c\big) \\
+ 3\big(2+k-b\big)^2\big(2-a\big)\big(-n\big) + 3\big(2-a\big)^2\big(2+n+kn+m-c\big)\big(-k\big) \\
+ 3\big(2-a\big)^2\big(2+k-b\big)\big(-m\big).
\end{multline*}

We have $A_{(X, D)}(D_1)=1-a$. It is easy to check that the pseudo-effective threshold is $2-a$. Compute
\begin{multline*}
S'_{(X, D)}(D_1) 
= \int_{0}^{2-a}{\bigg(\big(2-a-x \big)H_1 + \big(2+k-b \big)F_s + \big(2+n+kn+m-c \big)F_f \bigg)^3dx} \\
= \big(a-2 \big)^4 \big(nk^2 + 2km \big)/4 + 3 \big(a-2 \big)^2\big(2+k-b\big) \big(2+n+kn+m-c \big) \\
- 3 n \big(a-2 \big)^2 \big(2+k-b \big)^2/2 + k \big(a-2 \big)^3 \big(2+n+kn+m-c \big) + m \big(a-2 \big)^3 \big(2+k-b \big).
\end{multline*}

Finally, we obtain
\begin{multline*}
\beta'_{(X, D)}(D_1)
= \big(1-a \big) \big(2-a \big)^3 \big(nk^2 + 2km \big) + 6 \big(1-a \big) \big(2-a \big) \big(2+k-b \big) \big(2+n+kn+m-c \big) \\
-3n \big(1-a \big) \big(2+k-b \big)^2 \big(2-a \big) - 3k \big(1-a \big) \big(2-a \big)^2 \big(2+n+kn+m-c \big) \\
- 3m \big(1-a \big) \big(2-a \big)^2 \big(2+k-b \big) - \big(2-a \big)^4 \big(nk^2 + 2km \big)/4 \\ - 3 \big(2-a \big)^2\big(2+k-b\big) \big(2+n+kn+m-c \big) 
+ 3 n \big(2-a \big)^2 \big(2+k-b \big)^2/2 \\ + k \big(2-a \big)^3 \big(2+n+kn+m-c \big) + m \big(2-a \big)^3 \big(2+k-b \big). 
\end{multline*}

This expression is negative provided that $k(1-a)<2-b$ and $m(1-a)+n(1-b)<2-c$. 
If $m=-1,-2$ similarly we check that $\beta'_{(X,D)}(D'_1)+\beta'_{(X,D)}(D_1)<0$ where $D'_1\sim H \sim H_1 + kF_S+(k n + m)F_f$. Hence, we are done in this case.

\

\textbf{Case C6}.  \hypertarget{compute-p1-bundle-cases-new}{}
We show that in this case the pair $(X, D)$ is divisorially unstable for any values of the parameters. We have $X=\mathbb{P}_{\mathbb{F}_n}(\ee)$ where $\ee$ is given by the following non-split exact sequence:
\[
0\to \oo \to \ee \to \oo(-ks-(kn-2)f) \to 0, \ \ \ \ \ k, n \geq 1, \ \ \ \ \ (k, n)\neq (1, 1).
\]
This exact sequence defines a section $H_1$ of the structure morphism $\pi\colon X\to \mathbb{F}_n$. Note that $H_1\simeq \mathbb{F}_n$. Denote by $F_s=\pi^{-1}(s)$ the preimage of the $(-n)$-section on $\mathbb{F}_n$. We have $F_s\simeq \mathbb{P}^1\times\mathbb{P}^1$. We check that $(X, \Delta=H_1+F_s)$ is a log smooth log Fano pair. Note that the Mori cone $\overline{\mathrm{NE}}(X)$ is generated by the curves $l, f, s$, where $l$ is a fiber of $\pi$, $f$ is a fiber on $H_1\simeq \mathbb{F}_n$, and $s$ is a section on $H_1$. 
We have
$-K_X - \Delta = H_1+(1+k)F_s+(n+kn)F_f$, and  
$D_1|_{D_1} = -ks-(kn-2)f.$
By the above, it is enough to check that the restrictions of $-K_X-\Delta$ to $D_1$ and $D_2$ are ample. We have 
$-K_X - \Delta|_{D_1} 
= s + (2+n)f$
which is ample on $D_1\simeq \mathbb{F}_n$. Note that $D_1|_{D_2}$ has bidegree $(1,1)$ on $D_2$. Hence we have
$-K_X - \Delta|_{D_2} \sim (1,1)$, which is ample. 

Put $(X, D)$ for $D=aD_1+bD_2$ where $0\leq a, b<1$. This case is similar to the previous one if we put $m=-2$ and $c=0$. Indeed, we have the same formulas for the canonical class and the intersection theory. Note that $\mathrm{Pseff}(X)$ is generated by $H_1, F_s, F_f$. Thus, the same formulas as in the previous case apply in this case, which show that the pair is divisorially unstable for any values of parameters.

\

\textbf{Case C7}. 
\hypertarget{compute-p1-bundle-cases-viii}
We show that in this case the log Fano pair $(X, D)$ is divisorially unstable for any values of the parameters. We have 
\[
X=\mathbb{P}_{\mathbb{P}^1\times \mathbb{P}^1}\big(\oo\oplus \oo(k, n)\big), \ \ \ \ \ k\geq 0, \ \ \ \ \ n\geq 0, \ \ \ \ \ D_1\sim H_1, \ \ \ \ \ D_2\sim F_1 + F_2,
\] 
where $H_1\sim H-kF_1-nF_2$ for the tautological divisor $H$.
Put $D = aD_1 + bD_2$ for $0\leq a,b<1$. Then $L=-K_X - D \sim \big(2-a\big)H_1 + \big(2+k-b\big)F_1 + \big(2+n-b\big)F_2$ is ample. 
Compute the intersection theory:
\begin{equation*}
\begin{split}
F_1^2=F_2^2=0, \ \ \ \ \ \ H_1F_1F_2 = 1, \ \ \ \ \ \ H_1^3 = 2kn, \ \ \ \ \ \ H_1^2 F_1 = -n, \ \ \ \ \ \ H_1^2 F_2 = -k,
\end{split}
\end{equation*}

We estimate $\beta'_{(X, D)}(D_1)$. Note that $L - xD_1$ 
is pseudo-effective if and only if $x\leq 2-a$. We have $A_{(X, D)}(D_1)=1-a$. Compute 
\begin{multline*}
S'_{(X, D)}(D_1)=\int_0^{2-a}{\bigg(\big(2-a-x\big)H_1 + \big(2+k-b\big)F_1 +\big(2+n-b\big)F_2\bigg)^3dx} \\
=kn\big(a-2\big)^4/2 + 3\big(a-2\big)^2 \big(2+k-b\big) \big(2+n-b\big) \\
+n \big(a-2\big)^3 \big(2+k-b\big) + k \big(a-2\big)^3 \big(2+n-b\big).
\end{multline*}

Finally, we get
\begin{multline*}
\beta'_{(X, D)}(D_1)
=kn\big(2-a\big)^3\big(-3a/2\big) + n\big(2-a\big)\big(2+k-b\big)\big(-2a^2+2a-2\big) \\
+ k\big(2-a\big)\big(2+n-b\big)\big(-2a^2+2a-2\big) \\
- 3a\big(2-a\big)\big(2+k-b\big)\big(2-b\big) - 3a\big(2-a\big)\big(2-b\big)\big(2+n-b\big)
\end{multline*}
which is negative once we have $a>0$ or $k>0$ or $n>0$. 

If $a=k=n=0$, we get the case $X=\mathbb{P}^1\times\mathbb{P}^1\times\mathbb{P}^1$, $D_1=H$, $D_2=F_1+F_2$. Put $D = aD_1 + bD_2$. Then 
$
L=-K_X - D = \big(2-a\big)H + \big(2-a\big)F_1 + \big(2-b\big)F_2
$
is ample. Note that $(X, D)$ is or product type: $X=Y\times Z$, $D = D_Y\boxtimes D_Z$ where $(Y, D_Y)=(\mathbb{P}^1\times \mathbb{P}^1, aL)$, $L$ is a curve of bidegree $(1, 1)$, and $(Z, D_Z)=(\mathbb{P}^1, bP)$ where $P$ is a point. Arguing as in the case \hyperlink{compute-p2-bundle-cases-vii}{D7} and using Theorem \ref{thm-2-dim}, we see that $(X, D)$ is K-semistable (resp., K-polystable) if and only if $a\leq 1/2$ (resp., $a<1/2$) and $b=0$. This implies that $D$ is irreducible, which contradicts our assumption. Hence we are done in this case.

\

\textbf{Case C9}
\hypertarget{compute-p1-bundle-cases-8}
We show that in this case the log Fano pair $(X, D)$ is divisorially semistable (and K-polystable) for some values of the parameters. Let $X=\mathbb{P}_{\mathbb{P}^2}(T_{\mathbb{P}^2})$. In other words, $X$ is a divisor of bidegree $(1, 1)$ in $\mathbb{P}^2\times\mathbb{P}^2$. Let $\Delta=D_1+D_2$ where $D_1$ has bidegree $(1, 0)$, and $D_2$ has bidegree $(0, 1)$. Note that $D_1\simeq D_2 \simeq \mathbb{F}_1$. Put $D=aD_1+bD_2$, where $0\leq a,b<1$. Then $L=-K_X-D\sim (2-a, 2-b)$ is ample. Then 
$
L^3 = 
3(2-a)^2(2-b)+3(2-a)(2-b)^2.
$
Compute 
\begin{multline*}
\beta'_{(X, D)}(D_1) = 3(1-a)(2-a)(2-b)(4-a-b) - \int_0^{2-a}{\mathrm{vol}(2-a-x,2-b)dx} \\
= 3(1-a)(2-a)(2-b)(4-a-b) - \int_0^{2-a}{3(2-a-x)^2(2-b)+3(2-a-x)(2-b)^2dx} \\
= 3(1-a)(2-a)(2-b)(4-a-b) - (2-a)^3(2-b) - 3(2-a)^2(2-b)^2/2.
\end{multline*}

The pair is divisorially semistable for $0<a\leq 2-\sqrt{3}$ and 
$0\leq b\leq(16-3 a)/8-\sqrt{3} \sqrt{64-32 a+3 a^2}/8$ 
or $2-\sqrt{3}\leq a\leq2/7$ and $(-4+16 a-4 a^2)/(3 a)\leq b\leq(16-3 a)/8- \sqrt{3} \sqrt{64-32 a+3 a^2}/8$.

To check K-polystability, we apply Abban-Zhuang theory as in Proposition \ref{prop-formulas-for-beta}. Put $Y=D_1$. Let $Z$ be the intersection $D_1\cap D_2$ which is equivalent to a positive section $h$ on $D_1\simeq D_2\simeq \mathbb{F}_1$. 

\begin{lemma}[{cf. \cite[Lemma 7.7]{PCS19}}]
\label{lem-aut-tangent-bundle}
Let $G$ be 
the automorphism group of the pair $(X, \Delta)$. Then $G\simeq\mathrm{GL}(2, \mathbb{C})\rtimes\mathbb{Z}/2$. In particular, $G$ is reductive. 
\end{lemma}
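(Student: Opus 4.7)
The plan is to identify $X$ concretely as the incidence variety (the full flag variety of $\mathbb{P}^2$), describe $\mathrm{Aut}(X)$ in full, and then cut down by the stabilizer of $\Delta$. Recall that $X\subset \mathbb{P}^2\times\mathbb{P}^2$ is a $(1,1)$-divisor and can be identified with
$\{(p,\ell)\in\mathbb{P}^2\times(\mathbb{P}^2)^\vee\ |\ p\in\ell\}$,
i.e.\ $X\simeq\mathrm{SL}(3,\mathbb{C})/B$. Consequently $\mathrm{Aut}^0(X)=\mathrm{PGL}(3,\mathbb{C})$, acting diagonally on both factors, and the full $\mathrm{Aut}(X)=\mathrm{PGL}(3,\mathbb{C})\rtimes\mathbb{Z}/2$, where the nontrivial outer element is the projective duality $\iota\colon (p,\ell)\mapsto(\ell^\vee,p^\vee)$ that swaps the two $\mathbb{P}^2$-projections.

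Next I would read off how $D_1$ and $D_2$ sit inside this description: by the bidegrees, $D_1$ is the preimage under the first projection of a line $L\subset\mathbb{P}^2$, and $D_2$ is the preimage under the second projection of a point $Q\in\mathbb{P}^2$ (viewed as a hyperplane in $(\mathbb{P}^2)^\vee$). One checks that $D_1\cap D_2$ is isomorphic to a smooth curve exactly when $Q\notin L$; indeed, if $Q\in L$ then the fiber of $D_1\cap D_2$ over $p=Q$ consists of all lines through $Q$, contradicting log smoothness. So log smoothness forces $Q\notin L$, and after conjugation we may fix coordinates so that $L=\{z=0\}$ and $Q=[0{:}0{:}1]$.

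Now I compute the stabilizer. Any $g\in\mathrm{PGL}(3,\mathbb{C})$ preserving $\Delta$ must fix the pair $\{D_1,D_2\}$. Those that preserve $D_1$ and $D_2$ individually are the elements stabilizing both $L$ and $Q$; in the above coordinates these are represented by matrices of block form
\[
\begin{pmatrix} A & 0 \\ 0 & \lambda \end{pmatrix},\qquad A\in\mathrm{GL}(2,\mathbb{C}),\ \lambda\in\mathbb{C}^*,
\]
which modulo scalars gives a subgroup isomorphic to $\mathrm{GL}(2,\mathbb{C})$. This gives $G^0\simeq\mathrm{GL}(2,\mathbb{C})$.

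Finally I would show the duality $\iota$ swaps $D_1$ and $D_2$ after a suitable choice of isomorphism $\mathbb{P}^2\simeq(\mathbb{P}^2)^\vee$: identify the dual so that $L\in(\mathbb{P}^2)^\vee$ corresponds to $Q\in\mathbb{P}^2$ and vice versa; then $\iota$ sends $D_1\leftrightarrow D_2$ and hence preserves $\Delta$. Thus $\iota$ together with $G^0$ generates a subgroup $G\simeq \mathrm{GL}(2,\mathbb{C})\rtimes\mathbb{Z}/2$, and any automorphism of $(X,\Delta)$ either lies in $G^0$ or, after composing with $\iota$, lies in $G^0$, giving the claimed equality. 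Reductivity is then immediate. The only mildly subtle step is checking that the outer involution can in fact be chosen to preserve $\Delta$ (i.e.\ that the swap $L\leftrightarrow Q$ is compatible with some concrete model of $\iota$); this is a direct matrix verification using the transpose-inverse lift of $\iota$ to $\mathrm{PGL}(3,\mathbb{C})$, so no real obstacle arises.
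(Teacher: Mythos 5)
Your proposal is correct and follows essentially the same route as the paper: identify $\mathrm{Aut}(X)\simeq\mathrm{PGL}(3,\mathbb{C})\rtimes\mathbb{Z}/2$ for the flag threefold, reduce the stabilizer of $\Delta$ to $\mathrm{Aut}(\mathbb{P}^2;l,P)\simeq\mathrm{GL}(2,\mathbb{C})$ with the point off the line, and adjoin the duality involution swapping $D_1$ and $D_2$. The only cosmetic difference is how the condition $Q\notin L$ is justified (your fiber/non-transversality argument versus the paper's observation that $D_1\cap D_2$ is a positive section of $\mathbb{F}_1$, disjoint from the $(-1)$-curve), which does not change the substance.
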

\begin{proof}
We have $X\subset \mathbb{P}^2_1\times \mathbb{P}^2_2$ where we index two copies of $\mathbb{P}^2$ for convenience. It is well known that the automorphism group of $X$ is isomorphic to $\mathrm{PGL}(3, \mathbb{C})\rtimes \mathbb{Z}/2$. One can check that each of the two natural projections $\pi_i\colon X\to\mathbb{P}^2_i$ for $i=1,2$ induces an isomorphism $\mathrm{Aut}^0(X)\simeq \mathrm{Aut}(\mathbb{P}^2)\simeq \mathrm{PGL}(3, \mathbb{C})$. Using this observation, we show that the group~$G$ is isomorphic to  $\mathrm{Aut}(\mathbb{P}^2; l, P)\rtimes \mathbb{Z}/2$ where $\mathrm{Aut}(\mathbb{P}^2; l, P)$ is the automorphism group of $\mathbb{P}^2$ that preserves a line $l$ and a point $P$. Let $l_1=\pi_1(D_1)\subset \mathbb{P}^2_1$ and  $l_2=\pi_2(D_2)\subset \mathbb{P}^2_2$ be the lines on two copies of $\mathbb{P}^2$. Using $X$, we can identify $(\mathbb{P}^2_1)^\vee$, that is, the dual to the first copy of the projective plane, with $\mathbb{P}^2_2$.  

A line $l_1\subset \mathbb{P}^2_1$ 
under the above identification corresponds to a point $P_2\in \mathbb{P}^2_2$ defined as follows. The preimage $D_1=\pi_1^{-1}(l_1)\subset X$ is isomorphic to a Hirzebruch surface $\mathbb{F}_1$. Then, one checks that $P_2$ is the image of a unique $(-1)$-curve on $D_1$ via the projection map $\pi_2$. 
Note that $P_2\not\in l_2$. Indeed, this follows from the fact that the surfaces $D_1$ and $D_2$ intersect in a smooth rational curve $h$ which is a positive section on both $D_1\simeq D_2\simeq\mathbb{F}_1$. Since the positive section on $\mathbb{F}_1$ is disjoint from the $(-1)$-curve, we have $P_2\not\in l_2$. 

Then, the connected component of the identity of $G$ is contained in $\mathrm{Aut}(\mathbb{P}^2_2; l_2, P_2)$. On the other hand, any element of this group can be lifted to $\mathrm{Aut}^0(X)\simeq\mathrm{PGL}(3, \mathbb{C})$ in a unique way, and clearly it preserves $\Delta=D_1+D_2$. Note that, up to change of coordinates, there is only one isomorphism class of pair $(X, D_1+D_2)$. Hence we may assume that $D_1$ is given by the equation $x_0=0$, and $D_2$ is given by the equation $y_0=0$. Thus, there exists an involution that changes $D_1$ and $D_2$. Hence, we have $G \simeq\mathrm{Aut}(\mathbb{P}^2_2; l_2, P_2)\rtimes\mathbb{Z}/2$. The fact that $\mathrm{Aut}(\mathbb{P}^2_2; l_2, P_2)$ is isomorphic to $\mathrm{GL}(2, \mathbb{C})$ is straightforward.

\end{proof}
By Proposition \ref{thm-G-invariant}, to check K-polystability we may consider only irreducible $G$-invariant centers for the pair $(X, D)$: $D_1$, $D_2$ and $Z=D_1\cap D_2$. 
We apply Abban-Zhuang theory as in Proposition \ref{prop-formulas-for-beta} to $Z$. 
Put $Y=D_1\simeq\mathbb{F}_1$ and $Z=D_1\cap D_2$, $Y|_Y \sim f$, and $Z\sim h$ on $Y$. We have
\begin{multline*}
S(W^Y_{\bullet, \bullet}; Z) 
= \frac{3}{L^3} \int_0^{\infty}\int_0^{\infty}{\mathrm{vol}\Big((L-uY)|_Y - v Z\Big)dv du} \\
= \frac{3}{L^3} \int_0^{\infty}\int_0^{\infty}{\mathrm{vol}\Big(\big(2-a-u\big)H_1|_{H_1} + \big(2-b\big)H_2|_{H_1} - v h\Big)dv du} \\
= \frac{3}{L^3} \int_0^{\infty}\int_0^{\infty}{\mathrm{vol}\Big(\big(2-b-v\big)s + \big(4-a-b-u-v\big)f\Big)dv du} \\
= \frac{3}{L^3} \int_0^{2-a}\int_0^{2-b}{\Big(2\big(2-a-u\big)\big(2-b-v\big) + \big(2-b-v\big)^2\Big)dv du} \\
+ \frac{3}{L^3} \int_{2-a}^{4-a-b}\int_0^{4-a-b-u}{\big(4-a-b-u-v\big)^2dv du} \\
= \frac{\big(1/6 (-2+a) (3 a+2 (-5+b)) (-2+b)^2 + 1/12 (-2+b)^4\big)}{(2-a)^2(2-b)+(2-a)(2-b)^2}.
\end{multline*}

One checks that  
$
\frac{A_{(Y, D_Y)}(Z)}{S(W^Y_{\bullet, \bullet}; Z)}>1
$
for such values $a$ and $b$ that the pair $(X, D)$ is divisorially stable.

\

\textbf{Case C10.}
\hypertarget{compute-p1-bundle-cases-9}
We show that in this case the log Fano pair $(X, D)$ is divisorially unstable for any values of the parameters. We have $X=\mathrm{Bl}_l Y$ where $Y$ is a divisor of bidegree $(1,1)$ in $\mathbb{P}^2\times \mathbb{P}^2$, and $l$ is a fiber of a $\mathbb{P}^1$-bundle $Y\to \mathbb{P}^2$. So $X\sim E+F+H$ in $\mathbb{F}_1\times\mathbb{P}^2$, where $E$ and $F$ are the pullbacks of a $(-1)$-section and a fiber on $\mathbb{F}_1$, and $H$ is the pullback of a line from $\mathbb{P}^2$. Alternatively, $X$ can be described as $\mathbb{P}_{\mathbb{F}_1}(f^*T_{\mathbb{P}^2})$, where $f\colon \mathbb{F}_1\to\mathbb{P}^2$ is a blow up of a point. Note that $X$ is a Fano threefold $3.24$ as in \cite[12.3]{IP99}.

Note that $X$ can be blown down to $\mathbb{P}^1\times\mathbb{P}^2$ as follows: $f_1\colon X\subset \mathbb{F}_1\times\mathbb{P}^2\to Z=\mathbb{P}^1\times\mathbb{P}^2$. This map is birational and contracts $D_1\simeq \mathbb{F}_1$. Also, $X$ can be blown down to $\mathbb{P}^2\times\mathbb{P}^2$ as follows: $f_2\colon X\subset \mathbb{F}_1\times\mathbb{P}^2\to Y\subset \mathbb{P}^2\times\mathbb{P}^2$. This map is birational and contracts $D_2\simeq\mathbb{P}^1\times\mathbb{P}^1$. We have 
\[
K_X 
\sim -E-2F-2H|_X.
\]

We have $D_1\sim H-E$, $D_2\sim E$. Then 
$
L=-K_X -aD_1-bD_2 \sim (1-b+a)E+2F+(2-a)H.
$ 
There are $3$ extremal contractions: $f_1$, $f_2$ which are birational, and $f_3\colon X \to \mathbb{F}_1$, which is of fiber type. The corresponding extremal curves are 
\[
l_1 = (H-E)H , \quad \quad \quad l_2= E\cdot H, \quad \quad \quad l_3= (F+E)^2.
\]

Compute
$L\cdot l_1 
= 1-b+a>0$, $L\cdot l_2 = 1+b-a>0$, 
$L\cdot l_3 = 2-a$. Hence $(X, \Delta)$ is log Fano for any $0\leq a, b<1$. Compute the intersection theory: $H^3=0$, $E^2F=EF^2=E^3=F^3=0$, $F H^2 
=1$, 
$EH^2 
= 0$, 
$E^2H 
= -1$, 
$EFH 
=1$.
We have
\[
L - tD_2 \sim (1-b+a-t)E+2F+(2-a)H = (3-b-t)E+2F+(2-a)(H-E).
\]
 
Consequently, 
$
(L-tD_2)\cdot l_1 
= 1-b+a-t.
$
$
(L-tD_2)\cdot l_2 
= 1+b-a+t.
$
$
(L-tD_2)\cdot l_3 = 2 - a.
$
The nef threshold for $D_2\sim E$ is equal to $1-b+a$, and the pseudo-effective threshold is equal to $3-b$. Write down Zariski decomposition for 
\begin{multline*}
L - tD_2 \sim -K_X -aD_1-bD_2 = (1-b+a - t)E+2F+(2 - a)H = (3-b- t)E+2F+(2 - a)(H-E) \\
2F+(3-b- t)H + (1-b+a+t)(H-E)
. \\
= 2F+(2 - a - t')H + t'(H-E)
\end{multline*}

Compute
\begin{multline*}
S_1 = \int_0^{1-b+a}{\bigg(\big(1-b+a - t\big)E+2F+\big(2 - a\big)H\bigg)^3 dt} \\ 
= \int_0^{1-b+a}{\bigg(12\big(1-b+a - t\big)\big(2 - a\big) + 6\big(2 - a \big)^2 - 3 \big(1-b+a - t\big)^2\big(2 - a\big)\bigg) dt} \\
= \big(2-a\big) \bigg(17-a^3-3 a^2 \big(1-b\big)-21 b+3 b^2+b^3+3 a \big(5-b^2\big)\bigg).
\end{multline*}
\begin{multline*}
S_2 = \int_{1-b+a}^{3-b}{\bigg(2F+\big(3-b- t\big)H\bigg)^3 dt} 
= \int_{1-b+a}^{3-b}{6\big(3-b- t\big)^2 dt} = 2 \big(2-a\big)^3. \\
\end{multline*}

Finally, compute
\begin{multline*}
\beta'_{(X, D)}(D_2) = \big(1-b\big)\bigg(12\big(1-b+a\big)\big(2-a\big)+6\big(2-a\big)^2 -3\big(1-b+a\big)^2\big(2-a\big)\bigg) \\
-\big(-2+a\big) \bigg(-17+a^3-3 a^2 \big(-1+b\big)+21 b-3 b^2-b^3+3 a \big(-5+b^2\big)\bigg) \\
-2 \big(2-a\big)^3=\big(2-a\big) \bigg(-a^2\big(2 -a\big)-2 \big(2-b\big) \big(1+b\big)^2-a \big(7-6 b+3 b^2\big)\bigg)\leq -2\big(2-a\big) \big(2-b\big) \big(1+b\big)^2<0.
\end{multline*}

Hence, we are done in this case.

\section{Blow up of a $\mathbb{P}^2$-bundle over $\mathbb{P}^1$}
\label{sec-blow-up-of-p2-bundle}
In this section, we prove the following
\begin{proposition}
\label{prop-unstable-blow-up-of-p2-bundles}
Consider a log Fano pair of Maeda type $(X, D)$ as in the case \ref{X-is-Fano-blow-up-of-P2-bundle} of Corollary \ref{X-classification}. Then it is divisorially unstable for any values of the parameters.
\end{proposition}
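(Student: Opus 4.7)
The plan is to prove divisorial instability by showing $\beta'_{(X,D)}(D_1) < 0$, where $D_1 = F - E$ is the proper transform of the fiber of $Y \to \mathbb{P}^1$ through the blown-up point $p$. Writing $X = \mathrm{Bl}_p Y$ with $Y = \mathbb{P}_{\mathbb{P}^1}(\mathcal{O}\oplus\mathcal{O}\oplus\mathcal{O}(n))$, $n \geq 1$, I would work in the basis $\{H, F, E\}$ of $N^1(X)$, where $H, F$ are pullbacks of the tautological and fiber classes on $Y$, and $E$ is the exceptional divisor. The intersection numbers are $H^3 = n$, $H^2F = 1$, $HF^2 = F^3 = 0$, $E^3 = 1$, and $E$ is orthogonal to every product of pullbacks from $Y$. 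Then $K_X = -3H - (2-n)F + 2E$, $D_2 = H - nF$, and
\[
L = -K_X - aD_1 - bD_2 = (3-b)H + \bigl((2-a)-n(1-b)\bigr)F + (a-2)E.
\]
The cone $\overline{\mathrm{Eff}}(X)$ is simplicial with generators $D_1, D_2, E$; in this basis $L = (2n+2-a)D_1 + (3-b)D_2 + 2nE$, and the pseudo-effective threshold of $D_1$ is $\tau = 2n+2-a$.

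The main step is a piecewise Zariski decomposition of $L - xD_1$. Writing $u = 2-a$ and $w = 1-b$, the three extremal rays of $\overline{\mathrm{NE}}(X)$ are a line $l_1$ on $E$, a line $l_2$ in a general fiber of $Y \to \mathbb{P}^1$, and a ruling $l_3$ of $D_2 \simeq \mathbb{P}^1 \times \mathbb{P}^1$, with $(L-xD_1)\cdot l_1 = u - x$, $(L-xD_1)\cdot l_2 = 3 - b$, and $(L-xD_1)\cdot l_3 = u - nw - x$. Hence, as $x$ grows, the nef walls are crossed first along $l_3$ at $x_1 = u - nw$, then along $l_1$ at $x_2 = u$, before reaching the pseudo-effective boundary at $\tau$. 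Projecting onto these walls gives
\[
P(x) = \begin{cases} L - xD_1, & 0 \leq x \leq x_1,\\ \bigl((3-b) - \tfrac{x-x_1}{n}\bigr)H + (x-u)E, & x_1 \leq x \leq x_2,\\ \tfrac{\tau - x}{n}\,H, & x_2 \leq x \leq \tau, \end{cases}
\]
and in the last two pieces the $F$-coefficient vanishes, so $\mathrm{vol}(L - xD_1) = P(x)^3$ collapses cleanly via $(\alpha H + \gamma E)^3 = n\alpha^3 + \gamma^3$. Nefness and orthogonality $P(x) \cdot l_i = 0$ on the relevant walls are verified directly from the intersection table.

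Integrating piecewise yields a closed-form polynomial in $a, b, n$, and it remains to verify $\beta'_{(X,D)}(D_1) = (1-a)L^3 - S'_{(X,D)}(D_1) < 0$ uniformly on the log Fano region $n(1-b) < 2-a$ with $n \geq 1$, $0 \leq a, b < 1$. As a sanity check, at $(n, a, b) = (1, 0, 0)$ one computes $L^3 = 46$ and $S'_{(X,D)}(D_1) = \tfrac{147}{4} + 16 + 4 = \tfrac{227}{4}$, giving $\beta'_{(X,D)}(D_1) = -\tfrac{43}{4} < 0$; sampling other parameter values similarly yields strict negativity. The main obstacle is producing a uniform closed-form estimate across the piecewise integration; after the integrals collapse, the result should reduce to a short polynomial analysis or monotonicity argument in $(u, w, n)$, paralleling case \hyperlink{compute-p2-bundle-cases-iv}{D4} of the paper, with the $E$-direction handled cleanly by the orthogonality of $E$ with pullbacks from $Y$.
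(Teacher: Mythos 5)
Your route differs from the paper's: the paper destabilizes with $D_2$ (the strict transform of $H-nF$), taking the negative part of the Zariski decomposition along $D_1$ and pulling the positive part back from $\mathbb{P}_{\mathbb{P}^1}(\oo\oplus\oo\oplus\oo(n-1))$, and then closes the estimate by an explicit polynomial bound split into $n\geq 2$ and $n=1$. You instead compute $\beta'_{(X,D)}(D_1)$ for the strict transform of the fiber through $p$. Your chamber structure is essentially right: the effective cone is spanned by $D_1,D_2,E$, the walls are crossed first along the ruling of $D_2$ over the base (negative part $\frac{x-x_1}{n}D_2$) and then along the lines in $E$, and the integrals do collapse; with $x_1=2-a-n(1-b)$ one gets $S'_{(X,D)}(D_1)=n(3-b)^3x_1+\tfrac32(3-b)^2x_1^2-\tfrac{(2-a)^4}{4}+\tfrac{n^2(3-b)^4}{4}$, which reproduces your value $227/4$ at $(n,a,b)=(1,0,0)$. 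One slip to fix: your $l_2$ (a line in a general fiber) is not an extremal ray of $\overline{\mathrm{NE}}(X)$; it is the sum of the line in $E$ and the ruling of $D_1\simeq\mathbb{F}_1$, and the latter is the genuine third ray, with $(L-xD_1)\cdot f_2=1+a-b+x$. Since checking nefness of the middle-piece $P(x)=\bigl(3-b-\tfrac{x-x_1}{n}\bigr)H+(x-u)E$ against $l_1,l_2,l_3$ does not imply nonnegativity on $f_2$, you must verify $P(x)\cdot f_2\geq 0$ directly; it holds because $P(x)\cdot f_2$ is nondecreasing in $x$ and equals $3-b-n(1-b)>1+a-b>0$ at $x=x_1$ by the log Fano condition, but as written this check is missing.

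The genuine gap is the final inequality itself. The content of the proposition is that $\beta'_{(X,D)}(D_1)<0$ (or, in the paper, $\beta'_{(X,D)}(D_2)<0$) holds \emph{uniformly} on the region $n\geq 1$, $0\leq a,b<1$, $n(1-b)<2-a$; you verify one data point, appeal to "sampling other parameter values", and defer the uniform estimate to "a short polynomial analysis or monotonicity argument" that you do not supply. Spot checks are not a proof, and this estimate is exactly where the work lies (it is what the paper spends its effort on for $D_2$). The good news is that your choice of divisor does appear to work: writing $u=2-a$, $w=1-b$, the closed form above gives $\beta'_{(X,D)}(D_1)=n(3-b)^3(nw-1)+3(u-1)(3-b)^2x_1-(u-1)u^3-\tfrac32(3-b)^2x_1^2+\tfrac{u^4}{4}-\tfrac{n^2(3-b)^4}{4}$, and the term $-\tfrac{n^2(3-b)^4}{4}\leq -4n^2$ readily dominates for $n\geq 2$, leaving only a two-variable polynomial check for $n=1$. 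So the approach is viable, but until you exhibit such an estimate the proof is incomplete precisely at its decisive step.
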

\begin{proof}
Here $X$ is the blow up of a point on a $\mathbb{P}^2$-bundle $Y= \mathbb{P}_{\mathbb{P}^1}\big(\oo\oplus\oo\oplus\oo(n)\big)$ with $n\geq 1$, and $D=aD_1+bD_2$ for $0\leq a,b<1$.
Consider the blow down morphism $f\colon X \to Y$, and for the boundary on $Y$ we have $\Delta_Y = D'_1 + D'_2$ where $D'_i=f(D_i)$. 
We have that $\Delta|_E$ is a line on $E\simeq \mathbb{P}^2$, $D_1\simeq \mathbb{F}_1$, and $D_2\simeq D'_2 \simeq \mathbb{P}^1\times\mathbb{P}^1$. Further, we have
\[
K_Y = -3H - \big(2-n\big)F, \ \ \ \ \ \ D'_1 \sim F, \ \ \ \ \ \ D'_2 \sim H - n F, 
\]

Note that
$
-K_Y - aD'_1 - bD'_2 \sim \big(3-b\big)H + \big(2-n\big(1-b\big)-a\big)F
$
is ample if and only of $n(1-b)+a<2$. We have
$f^*D'_1 = D_1 + E$ and $f^* D'_2 = D_2$.
One checks that  $\mathrm{Pic}(X)=\langle D_1, D_2, E\rangle$, 
$\overline{\mathrm{Eff}}(X) = \mathbb{R}_{\geq0}\big[E\big] + \mathbb{R}_{\geq0}\big[D_1\big]+ \mathbb{R}_{\geq0}\big[D_2\big]$, and
\begin{equation*}
\begin{split}
\mathrm{Nef}(X) &= \mathbb{R}_{\geq0}\big[E+D_1\big] + \mathbb{R}_{\geq0}\big[n E + n D_1 + D_2\big]+ \mathbb{R}_{\geq0} \big[(n-1)E + nD_1 + D_2\big]. \\
\end{split}
\end{equation*}

We compute $\beta'_{(X, D)}(D_2)$. Then $A_{(X, D)}(D_1)=1-b$. We have
$
L - xD_2 \sim -K_X -D - xD_2  
=\big(2+2n-a\big)D_1 + \big(3-b-x\big)D_2 + 2nE.
$ Hence, the pseudo-effective threshold is equal to $3-b$. 
Write
\begin{multline*}
L - xD_2 \sim \big(2 - a - n \big(1-b-x\big)\big) \big(E+D_1\big) \\+ \big(1 + a - b - x\big) \big(nE+nD_1+D_2\big) 
+ \big(2-a\big) \big(\big(n-1\big)E+nD_1+D_2\big) \\
= \big(2  - \big(n-1\big) \big(1-b-x\big)\big) \big(E+D_1\big) + \big(3 - b - x \big) \big(\big(n-1\big)E+nD_1+D_2\big) \\
- \big(1+a-b-x\big) D_1.
\end{multline*}

Thus, the nef threshold for $L-xD_2$ is equal to $1+a-b$. For $x=1+a-b$, $L-xD_2$ defines a contraction $g\colon X \to Z= \mathbb{P}_{\mathbb{P}^1}(\oo\oplus\oo\oplus\oo(n-1))$. For $1+a-b-x\leq x\leq 3-b$ we have the Zariski decomposition $L-xD_2 = P(x)+N(x)$ where the negative part is $N(x)=(x-1-a+b) D_1$, and for the positive part $P(x)$ we have 
\begin{multline*}
P(X) = \big(2 - \big(n-1\big) \big(1-b-x\big)\big) \big(E+D_1\big) + \big(3 - b - x \big) \big(\big(n-1\big)E+nD_1+D_2\big) \\
= g^* \big(\big(2 - \big(n-1\big) \big(1-b-x\big)\big) E + \big(3 - b - x \big) \big(\big(n-1\big)E+D_2\big)\big)
\end{multline*}
where we identify $E$ and $D_2$ with their images on $Z$. Compute 
\begin{multline*}
S_1 
= \int_0^{1+a-b}{\bigg(f^*\Big(\big(3-b-x\big)H+\big(2-n\big(1-b-x\big)-a\big)F\Big)^3 - \big(2-a\big)^3\bigg)dx} \\
= \big(1+a-b\big) \bigg(60+b^2 \big(4-7 n\big)+a^2 \Big(6+b \big(-2+n\big)-5 n\Big) \\
+ 11 n+a^3 n+b^3 n+a \Big(-42+b \big(20-6 n\big)+b^2 \big(-2+n\big)+5 n\Big)+b \big(-32+11 n\big)\bigg)/2.
\end{multline*}

We compute the second part of the volume function on $Z$: 
\begin{multline*}
S_2 
= \int_{1+a-b}^{3-b}{\bigg(\big(2  - \big(n-1\big) \big(1-b-x\big)\big)E + \big(3 - b - x \big)\big(\big(n-1\big)E+D_2\big)\bigg)^3 dx} \\
= \big(2-a\big)^3 \Big(a \big(-1+c\big)+2 \big(1+c\big)\Big)/2.
\end{multline*}

Finally, we have
\begin{multline*}
\beta'_{(X, D)}(D_2) = \big(1-b\big)L^3 - S_1 - S_2 \\
=\big(1-b\big) \Big(\big(3-b\big)^3 n+ 3 \big(3-b\big)^2\big(2-n\big(1-b\big)-a\Big) - \big(2-a\big)^3\Big) - S_1 - S_2 \\
= \bigg(16+4 a^3-a^4-4 a \big(4-b\big) \big(1-b\big)^2-36 b \big(2-n\big) \\
-27 n-3 b^4 n+4 b^3 \big(5 n-2\big)-6 b^2 \big(7 n-8\big)\bigg)/2.
\end{multline*}
We show that this expression is negative for $0\leq a, b<1$. 
Indeed, for $n\geq 2$ we have that is less or equal than 
\[
\big(16 + 4 -27n + 4 \big(5n-2\big)\big)/2=\big(12-7n\big)/2<0, 
\]
and for $n=1$ it is less or equal than 
\begin{multline*}
\bigg(16+4 a^3-a^4-4 a \big(4-b\big) \big(1-b\big)^2-36 b -27-3 b^4+12 b^3 + 6 b^2 \bigg)/2 \\
= \bigg(16+3 a^3+a^3\big(1-a\big)-4 a \big(4-b\big) \big(1-b\big)^2 -27+3 b^3\big(1-b\big)-9 b\big(1-b^2\big)- 6 b\big(1-b\big)-21b \bigg)/2 \\
\leq \bigg(16+3 a^3+a^2/4-4 a \big(4-b\big) \big(1-b\big)^2 -27+3 b^2/4-9 b\big(1-b^2\big) - 6 b\big(1-b\big)-21b \bigg)/2 \\
\leq \bigg(16+3+1/4 -27+3/4 \bigg)/2<0.
\end{multline*}

Hence, the pair $(X, D)$ is K-unstable, and we are done in this case.\end{proof}

\section{Fano threefolds}
\label{sec-fano}
In this section, we assume that $X$ is a Fano threefold with $\rho(X)=1$ such that it is a log Fano with respect to a reducible boundary, that is $(X, \Delta)$ is such that $\Delta$ has $\geq 2$ components. Hence, for the Fano index we have $i(X)\geq 3$. Thus, $X$ is either a projective space $\mathbb{P}^3$, or a quadric $Q_3$.
\subsection{Projective space}
\label{subsec-p3}
In this subsection, we prove the following

\begin{proposition}
\label{prop-p3}
Let $(\mathbb{P}^3, D)$ be a log Fano pair of Maeda type such that $D$ is reducible. Then it is K-semistable if and only if $D=aD_1+bD_2$ where $D_1$ is a quadric, $D_2$ is a hyperplane, and $2a\geq 3b$, $6a-b\geq 4$. Moreover, for $2a> 3b$ and $6a-b> 4$ this pair is K-polystable.
\end{proposition}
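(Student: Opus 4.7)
The plan is to enumerate the reducible integral log smooth log Fano boundaries on $\mathbb{P}^3$ via the classification in Section \ref{sect-classification}, rule out the hyperplane-only cases by a direct $\beta$-computation, analyse $\Delta = Q+H$ explicitly to extract the semistable region, and finally use Abban-Zhuang to upgrade strict divisorial stability to K-polystability.

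By the classification together with the requirement that $-K_{\mathbb{P}^3}-\Delta=(4-\deg\Delta)H$ be ample, the only reducible possibilities for $\Delta$ on $\mathbb{P}^3$ are $\Delta=H_1+H_2$, $\Delta=H_1+H_2+H_3$ (hyperplanes in general position), or $\Delta=Q+H$ (smooth quadric plus hyperplane in general position). For the hyperplane-only cases set $D=\sum_{i=1}^k c_iH_i$ with $k\in\{2,3\}$; writing $s=\sum c_i$, one has $L=(4-s)H$, and since $L-xH_i\sim(4-s-x)H$ has pseudo-effective threshold $4-s$, direct integration gives
\[
\beta'_{(X,D)}(H_i)=\tfrac{1}{4}(4-s)^3(s-4c_i).
\]
The conditions $\beta'_{(X,D)}(H_i)\geq 0$ for every $i$ yield $s\geq 4c_i$; summing over $i$ forces $(k-4)s\geq 0$. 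Since $k\leq 3$ and $s>0$ (at least one $c_i$ is positive), this is impossible, so these pairs are K-unstable.

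For $\Delta=Q+H$, write $D=aD_1+bD_2$ with $D_1=Q$ and $D_2=H$, so $L=(4-2a-b)H$. A direct calculation in the spirit of Sections \ref{sec-blow-up-of-quadric}--\ref{section-p1-bundles}, using the pseudo-effective thresholds $\tau(D_1)=(4-2a-b)/2$ and $\tau(D_2)=4-2a-b$, gives
\[
\beta'_{(X,D)}(D_1)=\tfrac{1}{8}(4-2a-b)^3(4+b-6a),\qquad \beta'_{(X,D)}(D_2)=\tfrac{1}{4}(4-2a-b)^3(2a-3b),
\]
together with $\beta'_{(X,D)}(H')=\tfrac{1}{4}(4-2a-b)^3(2a+b)\geq 0$ for any auxiliary hyperplane $H'\not\subset\mathrm{Supp}(D)$. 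By Lemma $9.1$ and Remark $9.6$ in \cite{Fu16a} these three divisors control divisorial semistability, so K-semistability forces (and, up to the Abban-Zhuang upgrade below, is equivalent to) $6a-b\leq 4$ and $2a\geq 3b$.

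To promote strict divisorial stability to K-polystability I would identify $G=\mathrm{Aut}(\mathbb{P}^3,Q+H)$, which is the stabilizer of $H$ in $\mathrm{PO}(4,\mathbb{C})$ and is reductive. By Theorem \ref{thm-G-invariant}, only $G$-invariant valuations need be tested, whose centers are $Q$, $H$, the smooth conic $C=Q\cap H$, and the pole $P$ of $H$ with respect to $Q$. Applying Proposition \ref{prop-formulas-for-beta} with $Y=H$, adjunction gives $(Y,D_Y)=(\mathbb{P}^2,aC)$; since $L-uY=(4-2a-b-u)H$ is nef on $[0,4-2a-b]$ one has $N(u)=0$ and the first integral in \eqref{eq-min2} vanishes, while the second yields $S(W^Y_{\bullet,\bullet};C)=(4-2a-b)/8$. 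Hence $A_{(Y,D_Y)}(C)/S(W^Y_{\bullet,\bullet};C)=8(1-a)/(4-2a-b)>1$ is exactly $6a-b<4$; the symmetric choice $Y=Q$, $(Y,D_Y)=(\mathbb{P}^1\times\mathbb{P}^1,bC)$, recovers $2a>3b$. For valuations centered at $P$, direct computation on $\mathrm{Bl}_P\mathbb{P}^3$ gives $A(E_P)=3$ and $S(E_P)=3(4-2a-b)/4$, so $\beta(E_P)=3(2a+b)/4>0$ unconditionally. The main obstacle will be verifying that these four centers really exhaust what needs to be tested; however, since the $C$-bounds from Abban-Zhuang recover the divisorial constraints exactly, every $G$-invariant valuation with center meeting $Q\cup H$ is controlled, leaving only the isolated point $P$ and possible invariant cones over $C$ with apex $P$, which the direct computation handles.
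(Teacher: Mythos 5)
Your core computations coincide with the paper's: the values $\beta'_{(X,D)}(D_1)=\tfrac18(4-2a-b)^3(4+b-6a)$, $\beta'_{(X,D)}(D_2)=\tfrac14(4-2a-b)^3(2a-3b)$, the auxiliary hyperplane check, the appeal to Lemma 9.1/Remark 9.6 of \cite{Fu16a}, and the Abban--Zhuang estimate for the conic $C=Q\cap H$ are all the same as in the paper (and your inequalities $2a\geq 3b$, $6a-b\leq 4$ are the ones the computation and Theorem \ref{main-thm}(i) actually give; the reversed directions in the statement of Proposition \ref{prop-p3} are sign typos in the paper). Two genuine deviations: for the hyperplane-only boundaries the paper cites \cite[1.6]{Fu20b}, whereas you eliminate them by the one-line formula $\beta'(H_i)=\tfrac14(4-s)^3(s-4c_i)$ and summation over $i$ --- correct and more self-contained; and in the polystability step the paper runs Proposition \ref{prop-formulas-for-beta} with $Y=D_1=Q$ (getting $S(W^Y_{\bullet,\bullet};Z)=1-a/2-b/4$), while you take $Y=H$ (getting $(4-2a-b)/8$); both choices reproduce exactly the two divisorial constraints, so this is an equivalent route.

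The one step that is not complete as written is the fixed point $P$, the pole of $H$ with respect to $Q$. You are right that $P$ is a $G$-invariant center (indeed the paper's list ``$D_1$, $D_2$, $Z$'' of invariant subsets omits it), but verifying $\beta(E_P)=3(2a+b)/4>0$ for the ordinary blow-up does not dispose of all $G$-invariant divisors over $X$ centered at $P$: the projectivised tangent space $\mathbb{P}(T_P\mathbb{P}^3)$ contains a $G$-invariant conic (the directions of the cone over $C$ with apex $P$), so there are infinitely many invariant divisorial valuations centered at $P$ (iterated or weighted blow-ups along that flag, quasi-monomial combinations), and Theorem \ref{thm-G-invariant} requires $A>S$ for every one of them. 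Your closing remark that the ``invariant cones over $C$ with apex $P$'' are handled by the direct computation conflates two things: the cone itself is a prime divisor on $X$ (so it is covered by divisorial stability), but valuations centered at the point $P$ that see the cone direction are not covered by $\beta(E_P)$ alone. To close this one would need, for instance, a second application of the Abban--Zhuang method on $\mathrm{Bl}_P\mathbb{P}^3$ with the flag $E_P\supset(\text{invariant conic in }E_P)$, or some other bound giving $\delta_P(X,D)>1$. The same caveat applies to the paper's own proof, which does not discuss $P$ at all, so your argument is the more careful of the two, but its final step still has to be carried out.
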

\begin{proof}
The case of $(\mathbb{P}^3, \Delta)$ such that the components of $\Delta$ are hyperplanes was treated in \cite[1.6]{Fu20b}, and such pairs are K-unstable for any values of the coefficients such that $D\neq 0$. If $\Delta$ is reducible and not all of its components are hyperplanes, then there is only the following case to consider: $\big(\mathbb{P}^3, \Delta)$ where $\Delta = D_1 + D_2$ with $D_1$ a smooth quadric and $D_2$ a hyperplane. 

Put $D = aD_1 + bD_2$, $0\leq a, b <1$. Then $L=-K_X-D \sim \big(4-2a-b\big)H$ is ample. We have $L^3 = \big(4-2a-b\big)^3$. We estimate $\beta'_{(X, D)}(D_2)$. 
We have $A_{(X, D)}(D_2) = 1-b$. Then
\begin{equation*}
\begin{split}
S'_{(X, D)}(D_2)=&\int_0^{4-2a-b}{\bigg(L-xD_2\bigg)^3 dx} 
= \big(4-2a-b\big)^4/4.
\end{split}
\end{equation*}

We have
$
\beta'_{(X, D)}(D_2) = \big(1-b\big)\big(4-2a-b\big)^3 - \big(4-2a-b\big)^4/4 
= \big(4-2a-b\big)^3 \big( a/2 - 3b/4 \big) 
$
which is negative for $2a<3b$. Now, compute $\beta'_{(X, D)}(D_1)$. 
We have $A_{(X, D)}(D_1) = 1-a$. Then
$
S'_{(X, D)}(D_1) 
= \big(4-2a-b\big)^4/8.
$
Hence
$
\beta'_{(X, D)}(D_1) = \big(1-a\big)\big(4-2a-b\big)^3 - \big(4-2a-b\big)^4/8 
= \big(4-2a-b\big)^3 \big( 1/2 - 3a/4 + b/8 \big)
$
which is negative for $6a-b>4$, or $a>2/3+b/6$. 

Now we check divisorial stability for the divisors not equal to the components of $\Delta$. By Lemma 9.1 and Remark 9.6 in \cite{Fu16a} (they are formulated for the anticanonical polarisation, but work in our setting as well) it is enough to compute $\beta'_{(X, D)}(D')$ for $D'\sim H$. However, it is clear that $\beta'_{(X, D)}(D_2)\leq \beta'_{(X, D)}(D')$, hence the condition $\beta'_{(X, D)}(D_i)\geq 0$ for $i=1, 2$ is sufficient for divisorial stability.
Hence, for $6a-b\leq4$, $2a\leq3b$ the pair $(X, D)$ is divisorially stable. Now, we check K-semistability using Theorem \ref{thm-G-invariant}. Note that for the the automorphism group $G$ of the pair $(X, \Delta)$ we have $G\simeq \mathrm{PGL}(2, \mathbb{C})\times\mathbb{Z}/2$. In particular, $G$ is reductive. Thus, proper $G$-invariant subsets of $X$ are as follows: $D_1, D_2, Z=D_1\cap D_2$.

Hence, to check K-polystability, it is enough to compute $\beta$-invariant only for $G$-invariant divisors over $X$ whose center on $X$ is equal to $Z$. To this aim, we use Proposition \ref{prop-formulas-for-beta}. Let $Y=D_1$. According to Proposition \ref{prop-formulas-for-beta}, first estimate $\frac{A_{(X, D)}(Y)}{S_{(X, D)}(Y)}>1$ for $2a> 3b$ and $6a-b> 4$.  

Note that $Z$ is a curve of bidegree $(1, 1)$ on $Y$. Now we compute $S(W_{\bullet, \bullet}; Z)$ using the formula \eqref{eq-min2}. Since $N(u)=0$, we have 
\begin{multline*}
S(W^Y_{\bullet, \bullet}; Z) 
= \frac{3}{L^3} \int_0^{\infty}\int_0^{\infty}{\mathrm{vol}\Big((L-uY)|_Y - v Z\Big)dv du} \\
= \frac{3}{L^3} \int_0^{\infty}\int_0^{4-2a-b-2u}{\mathrm{vol}\Big(\big(4-2a-b-2u-v\big)\big(1, 1\big)\Big)dv du} \\
= \frac{3}{L^3} \int_0^{\infty}\int_0^{4-2a-b-2u}{ 2\big(4-2a-b-2u-v\big)^2dv du} 
= \frac{\big(2-a-b/2\big)}{2}.
\end{multline*}

By Proposition \ref{prop-formulas-for-beta} we have 
$
\delta_Z(X, D)\geq \frac{1-a}{\big(1-a/2-b/4\big)} = \frac{A_{(X, D)}(Y)}{S_{(X, D)}(Y)} >1
$
for $2a> 3b$ and $6a-b> 4$. We conclude that the pair is K-semistable (resp., K-polystable) for $2a\geq 3b$ and $6a-b\geq 4$ (resp., for $2a> 3b$ and $6a-b> 4$). Hence, we are done in this case.
\end{proof}

\subsection{Quadric hypersurface} 
\label{subsec-quadric}
Consider a log smooth log Fano pair $(X, \Delta)$ where $X$ is a smooth quadric $Q\subset \mathbb{P}^4$ and $\Delta = D_1 + D_2$ with $D_i$ being its hyperplane sections. Put $D=aD_1 + bD_2$. In this subsection, we prove the following

\begin{proposition}
\label{prop-quadric}
The pair $(X, D)$ is K-semistable if and only if $1+a-3b\geq0$ and $1+b-3a\geq0$. Moreover, for $1+a-3b>0$ and $1+b-3a>0$ this pair is K-polystable.
\end{proposition}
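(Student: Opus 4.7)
The plan is to imitate the argument used in cases like F1 (the projective space) and in the Abban--Zhuang applications from earlier sections, since the situation is the simplest possible: $\Pic(X)$ has rank one, $H^3=2$, and both components satisfy $D_1\sim D_2\sim H$, so that $L=-K_X-D\sim(3-a-b)H$ is ample for $0\le a,b<1$, and $L^3=2(3-a-b)^3$.

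First I would compute $\beta'_{(X,D)}(D_1)$ and $\beta'_{(X,D)}(D_2)$ directly. For $D_1$ the divisor $L-xD_1\sim (3-a-b-x)H$ is nef iff pseudoeffective iff $x\le 3-a-b$, so
\[
S'_{(X,D)}(D_1)=\int_0^{3-a-b}2(3-a-b-x)^3\,dx=\frac{(3-a-b)^4}{2},
\]
which gives $\beta'_{(X,D)}(D_1)=\frac{1}{2}(3-a-b)^3(1+b-3a)$, and symmetrically $\beta'_{(X,D)}(D_2)=\frac{1}{2}(3-a-b)^3(1+a-3b)$. Thus both are $\ge 0$ exactly when the two stated inequalities hold. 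For any other prime divisor $D'\sim eH$ with $e\ge 1$ one gets $A_{(X,D)}(D')=1$ and $S'_{(X,D)}(D')\le (3-a-b)^4/2$, so $\beta'_{(X,D)}(D')>0$ whenever $(a,b)\neq(0,0)$; invoking Lemma~9.1 and Remark~9.6 of \cite{Fu16a}, divisorial semistability is thus equivalent to the two inequalities $1+a-3b\ge 0$ and $1+b-3a\ge 0$.

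To promote divisorial stability to K-polystability, I would apply the Abban--Zhuang formula (Proposition~\ref{prop-formulas-for-beta}) combined with the $G$-equivariant criterion (Theorem~\ref{thm-G-invariant}). The automorphism group $G$ of the pair $(X,\Delta)$ is the subgroup of $\operatorname{PO}(5)$ preserving the pair of hyperplane sections $D_1,D_2$, possibly swapping them; a short analysis shows $G$ is reductive and that the only $G$-invariant proper subvarieties of positive dimension are $D_1,D_2$, and the smooth conic $Z=D_1\cap D_2$. Hence it suffices to control $\delta_Z(X,D)$. Taking $Y=D_1\simeq \mathbb{P}^1\times\mathbb{P}^1$, adjunction gives $D_Y=bZ$ (so $A_{(Y,D_Y)}(Z)=1-b$), $H|_Y$ has bidegree $(1,1)$, and $Z$ is a $(1,1)$-curve on $Y$. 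Since $N(u)=0$ along the whole interval, the first summand in \eqref{eq-min2} vanishes, and the double integral computes to
\[
S(W^Y_{\bullet,\bullet};Z)=\frac{3}{2(3-a-b)^3}\int_0^{3-a-b}\int_0^{3-a-b-u}2(3-a-b-u-v)^2\,dv\,du=\frac{3-a-b}{4}.
\]
Similarly $S_{(X,D)}(Y)=(3-a-b)/4$. Therefore
\[
\frac{A_{(X,D)}(Y)}{S_{(X,D)}(Y)}=\frac{4(1-a)}{3-a-b},\qquad \frac{A_{(Y,D_Y)}(Z)}{S(W^Y_{\bullet,\bullet};Z)}=\frac{4(1-b)}{3-a-b},
\]
and both are $\ge 1$ (resp.\ $>1$) precisely under $1+b-3a\ge 0$ and $1+a-3b\ge 0$ (resp.\ strict). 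Proposition~\ref{prop-formulas-for-beta} then yields $\delta_Z(X,D)\ge 1$, and Theorem~\ref{thm-G-invariant} concludes K-semistability in the weak case and K-polystability in the strict case.

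I do not expect any genuine obstacle: the rank-one Picard group makes the volume computation a one-variable polynomial integral, and the Abban--Zhuang estimate reduces to a single $(1,1)$-divisor computation on $\mathbb{P}^1\times\mathbb{P}^1$. The only point requiring mild care is the identification of the $G$-invariant centers, which follows because any $G$-fixed curve on $D_1$ would have to meet $Z$ in a $G$-invariant subset, forcing it to equal $Z$, and similarly for points.
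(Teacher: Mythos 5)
Your proposal is correct and follows essentially the same route as the paper: the same one-variable volume integrals giving $\beta'_{(X,D)}(D_i)=\tfrac12(3-a-b)^3(1+b-3a)$ resp. $\tfrac12(3-a-b)^3(1+a-3b)$, the same reduction via Lemma~9.1/Remark~9.6 of \cite{Fu16a}, and the same $G$-equivariant Abban--Zhuang step on the conic $Z=D_1\cap D_2$ yielding $S(W^Y_{\bullet,\bullet};Z)=S_{(X,D)}(Y)=(3-a-b)/4$. The only (harmless) differences are that you take $Y=D_1$ instead of $D_2$ and that you spell out both terms of the minimum in \eqref{eq-min}, which the paper leaves partly implicit.
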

\begin{proof}
We have $L = -K_X -D \sim (3-a-b)H$. 
Compute $\beta'_{(X, D)}(D_1)$. We have $A_{(X, D)}(D_1) = 1-a$. Then
\begin{equation*}
\begin{split}
S'_{(X, D)}(D_1) = \int_0^{3-a-b}{\bigg(L-xD_1\bigg)^3dx} 
= \big(3-a-b\big)^4/2.
\end{split}
\end{equation*}

We have
$
\beta'_{(X, D)}(D_1) = 2\big(1-a\big)\big(3-a-b\big)^3 - \big(3-a-b\big)^4/2 
= \big(3-a-b\big)^3\big(1 + b - 3a \big)/2.
$
Similarly, $\beta'_{(X, D)}(D_2)=\big(3-a-b\big)^3\big(1 + a - 3b \big)/2$. Now we check divisorial stability for divisors not equal to the components of $\Delta$. By Lemma 9.1 and Remark 9.6 in \cite{Fu16a} (they are formulated for the anticanonical polarisation, but work in our setting as well) it is enough to compute $\beta'_{(X, D)}(D')$ for $D'\sim H$. However, it is clear that $\beta'_{(X, D)}(D_1)\leq \beta'_{(X, D)}(D')$, hence the condition $\beta'_{(X, D)}(D_i)\geq 0$ for $i=1, 2$ is sufficient for divisorial stability.
Thus $(X, D)$ is divisorially stable when $1 + a - 3b\geq0$ and $1 + b - 3a\geq0$.

As in the previous case, we see that there exists a subgroup $G\simeq \mathrm{PGL}(2, \mathbb{C})$ in the automorphism group of the pair $(X, \Delta)$ which faithfully acts on $Z=D_1\cap D_2\simeq \mathbb{P}^1$. To check K-polystability, by Theorem \ref{thm-G-invariant} it is enough to compute $\beta$-invariant only for $G$-invariant centers of divisors over $X$. Hence, by the above we may consider only the intersection $Z=D_1\cap D_2$. Let $Y=D_2$. To apply Proposition \ref{prop-formulas-for-beta}, we estimate
$
\frac{A_{(X, D)}(Y)}{S_{(X, D)}(Y)} >1 
$
for $1 + a - 3b>0$ and $1 + b - 3a>0$.

Note that $Y$ is a quadric, and $Z$ is a curve of bidegree $(1, 1)$. We compute $S(W_{\bullet, \bullet}; Z)$ using the formula \eqref{eq-min2}. Since $N(u)=0$, we have 
\begin{multline*}
S(W^Y_{\bullet, \bullet}; Z) 
= \frac{3}{L^3} \int_0^{\infty}\int_0^{\infty}{\mathrm{vol}\Big((L-uY)|_Y - v Z\Big)dv du} \\
= \frac{3}{L^3} \int_0^{\infty}\int_0^{3-a-b-u}{\mathrm{vol}\Big(\big(3-a-b-u-v\big)\big(1, 1\big)\Big)dv du} \\
= \frac{3}{L^3} \int_0^{\infty}\int_0^{3-a-b-u}{ 2\big(3-a-b-u-v\big)^2dv du} 
= \frac{\big(3-a-b\big)}{4}.
\end{multline*} 

By Proposition \ref{prop-formulas-for-beta} we obtain
$
\delta_Z(X, D)\geq \frac{4(1-a)}{3-a-b}=\frac{A_{(X, D)}(Y)}{S_{(X, D)}(Y)}>1
$
for $1+a-3b>0$ and $1+b-3a>0$. We conclude that the pair is K-semistable (resp., K-polystable) for $1+a-3b\geq0$ and $1+b-3a\geq0$ (resp., $1+a-3b>0$ and $1+b-3a>0$). Hence, we are done in this case.
\end{proof}

\section{Proof of main theorems}
\label{sec-proof-main}

\begin{proof}[Proof of Theorem \ref{main-thm2}]
We use the classification of the pairs $(X, \Delta)$ with $\dim X=3$, due to Maeda \cite{M83}. We assume that $\Delta$ is reducible. The claim on the MMP with scaling follows from Proposition \ref{prop-MMP}. In Corollary \ref{X-classification}, we obtain the list of possibilities for $X$ and $\Delta$ according to the type of the first  contraction in the MMP. In sections \ref{sec-blow-up-of-quadric}--\ref{sec-fano} we list all the possibilities for these pairs. We summarise all the cases in Table \hyperlink{table-1}{$1$}. 
\end{proof}

\begin{proof}[Proof of Theorem \ref{main-thm}]
Let $(X, D)$ be a log Fano pair of Maeda type, such that $\dim X = 3$ and $D$ is reducible. Let $(X, \Delta)$ be the corresponding log smooth log Fano pair. First we show that the pair $(X, D)$ is divisorially unstable in all cases, except cases listed in Theorem \ref{main-thm}. To this aim, we use the classification of such pairs as in Theorem \ref{main-thm2} according to the type of contraction in the MMP as in Proposition \ref{prop-MMP}.

In section \ref{sec-blow-up-of-quadric} we consider the blow up of a quadric. In Proposition \ref{prop-unstable-blow-up-of-quadric} we show that $(X, D)$ is divisorially unstable for any values for the parameters. In section \ref{section-p2-bundles} we consider $\mathbb{P}^2$-bundles over $\mathbb{P}^1$. In Proposition \ref{prop-unstable-p2bundles} we show that $(X, D)$ is divisorially unstable for any values for the parameters, unless $(X, D)$ is as in the case \hyperlink{compute-p2-bundle-cases-v}{D5}. In section \ref{section-quadric-bundles} we consider quadric bundles over $\mathbb{P}^1$, and show that $(X, D)$ is divisorially unstable for any values for the parameters, unless $(X, D)$ is as in the case \hyperlink{compute-quadric-bundle-case-1}{Q1} with $m=1$. 
In section \ref{section-p1-bundles} we consider $\mathbb{P}^1$-bundles over a surface. In Proposition \ref{prop-unstable-p1-bundles} we show that $(X, D)$ is divisorially unstable for any values for the parameters, unless $(X, D)$ is as in the case \hyperlink{compute-p1-bundle-cases-8}{C9}. 
In section \ref{sec-blow-up-of-p2-bundle} we consider the blow up of a point on certain $\mathbb{P}^1$-bundles over $\mathbb{P}^1$. In Proposition \ref{prop-unstable-blow-up-of-p2-bundles} we show that $(X, D)$ is divisorially unstable for any values for the parameters. Finally, in section \ref{sec-fano} we consider Fano threefolds. In Propositions \ref{prop-p3} and \ref{prop-quadric} we determine the values of the parameters for which the corresponding pairs are K-semistable. For each of divisorially stable pairs, using Propositions \ref{prop-formulas-for-beta} and \ref{thm-G-invariant}, we show that divisorial stability implies K-polystabiliy. This concludes the proof of the theorem.
\end{proof}

\begin{appendix}
\section{The table}
\label{sect-table}
\end{appendix}
We list $3$-dimensional log smooth log Fano pairs $(X, \Delta)$ where $\Delta=\sum D_i$ with the property that $\Delta$ is reducible. In the last column, we indicate whether a log Fano pair $(X, D)$ of Maeda type can be K-semistable, where $\supp (D) \subset \supp (\Delta)$. As a consequence, we note that if $\Delta$ is reducible, then $X$ is rational, it has no moduli, and admits a faithful action of a $2$-dimensional torus.

We use the following notation. In cases \hyperref[sec-blow-up-of-quadric]{E1} and \hyperref[subsec-quadric]{F4}, $Q$ is a quadric hypersurface in $\mathbb{P}^4$. In cases \hyperref[sec-blow-up-of-quadric]{E1} and \hyperref[sec-blow-up-of-p2-bundle]{E2}, $E\simeq \mathbb{P}^2$ is the exceptional divisor of the blow up in a point. In case \hyperref[sec-blow-up-of-quadric]{E1}, $H\;\widetilde{}\ $ is the strict transform of a hyperplane in $\mathbb{P}^4$ that passes through the blown up point $p$. In case \hyperref[sec-blow-up-of-p2-bundle]{E2}, the blown-up point $p$ does not belong to the unique divisor, linearly equivalent to $H-a_2F$ on $Y=\mathbb{P}_{\mathbb{P}^1}(\oo\oplus\oo\oplus\oo(a_2))$. Then, $(H-a_2F)\;\widetilde{}\ $ is its strict transform on $X$, and $F\;\widetilde{}\ $ is the strict transform on $X$ of the fiber $F$ of the morphism $Y\to \mathbb{P}^1$ with the condition that $F$ contains the point $p$. In cases \hyperlink{compute-p1-bundle-cases-remark}{C1}--\hyperlink{compute-p2-bundle-cases-viii}{D8}, $H$ is the tautological divisor. In cases \hyperlink{compute-p2-bundle-cases-remark}{D1}--\hyperlink{compute-p2-bundle-cases-viii}{D8}, $F$ is a fiber of the morphism $X\to \mathbb{P}^1$. In cases \hyperlink{compute-p1-bundle-cases-remark}{C1}--\hyperlink{compute-p1-bundle-cases-iii}{C3}, $F$ is a preimage of a line on $\mathbb{P}^2$ via the morphism $X\to \mathbb{P}^2$. In cases \hyperlink{compute-p1-bundle-cases-iv}{C4}, \hyperlink{compute-p1-bundle-cases-vii}{C5} and \hyperlink{compute-p1-bundle-cases-vii}{C8}, $F_f$ (resp., $F_s$) is a preimage of a fiber $f$ (resp., of a section $s$ such that $s^2=-n$) on $\mathbb{F}_n$ via the morphism $X\to \mathbb{F}_n$. In case \hyperlink{compute-p1-bundle-cases-viii}{C7}, $F_1$ and $F_2$ are preimages of two different rulings via the morphism $X\to \mathbb{P}^1\times \mathbb{P}^1$. In case \hyperlink{compute-p1-bundle-cases-8}{C9}, $H_1$ and $H_2$ are divisors of bidegree $(1, 0)$ and $(0, 1)$. In case \hyperlink{compute-p1-bundle-cases-9}{C10}, $H$ is the pullback of a line under the natural morphism $X\to \mathbb{P}^2$, $F$ and $E$ are preimage of a ruling and a $(-1)$-curve under the natural morphism $X\to \mathbb{F}_1$. Finally, in all cases $\epsilon$ is the nef value, see Remark \ref{rem-nef-value}.

\hypertarget{table-1}
\bigskip
\begin{adjustbox}{width=1.12\textwidth,center=\textwidth}
\begin{tabular}{ | l | l | l | l | l | l | l | l | l | }
\hline
& X & & $[D_1]$ & $[D_2]$ & $[D_3]$ & $\epsilon$ & No in \cite{M83} & K-semistable \\
\hline
\hyperref[sec-blow-up-of-quadric]{E1} & $\mathrm{Bl}_p\,Q$ & $\vphantom{a^{b^b}}$ & $E$ & $H\:\,\widetilde{}$ & & $2$ & 7.1 & No \\ 
\hline
\hyperref[sec-blow-up-of-p2-bundle]{E2} & $\mathrm{Bl}_p\, \mathbb{P}_{\mathbb{P}^1}(\oo\oplus\oo\oplus\oo(n))$ & $n\geq 1$ & $(H-nF)\:\widetilde{}$ & $F\:\,\widetilde{}$ & $\vphantom{A^{b^b}}$ & $2$ & 7.4 & No \\ 
\hline
\hyperlink{compute-p1-bundle-cases-remark}{C1} & $\mathbb{P}_{\mathbb{P}^2}(\oo\oplus\oo(k))$ & $k\geq0$ & $H-kF$ & $2F$ & & $2$ & 8.2\,(1)& No \\ 
\hline
\hyperlink{compute-p1-bundle-cases-ii}{C2} & $\mathbb{P}_{\mathbb{P}^2}(\oo\oplus\oo(k))$ & $k\geq0$  & $H-kF$ & $F$ & $F$ & $2$ & 8.2\,(2)& No \\ 
\hline
\hyperlink{compute-p1-bundle-cases-iii}{C3} & $\mathbb{P}_{\mathbb{P}^2}(\oo\oplus\oo(k))$  & $k\geq -1$ & $H-kF$ & $F$ & & $2$ & 8.2\,(3)& No \\ 
\hline
\hyperlink{compute-p1-bundle-cases-iv}{C4} & $\mathbb{P}_{\mathbb{F}_{n}}(\oo\oplus\oo(ks+(kn+m)f)$ & $k, n\geq 0$, $m\geq -1$ & $H-kF_s-(k n+m)F_f$ & $F_s$ & & $2$ & 8.3\,(1)--8.3\,(3)& No \\ 
\hline
\hyperlink{compute-p1-bundle-cases-vii}{C5} & $\mathbb{P}_{\mathbb{F}_{n}}(\oo\oplus\oo(ks+(kn+m)f))$ & $k, m, n\geq 0$ & $H-kF_s-(k n+m)F_f$ & $F_s$ & $F_f$ & $2$ & 8.4& No \\ 
\hline
\makecell[l]{\hyperlink{compute-p1-bundle-cases-new}{C6}\\ \vphantom{}} & \makecell[l]{$\mathbb{P}_{\mathbb{F}_n}(\ee)$\\ $[\ee]\in \mathrm{Ext}^1 (\oo, \oo(ks+(kn-2)f))-\{0\}$} & \makecell[l]{$k, n\geq 1$\\ $(k, n)\neq(1,1)$} & \makecell[l]{$H$\\ \vphantom{}} & \makecell[l]{$F_s$\\ \vphantom{}} & & \makecell[l]{$2$\\ \vphantom{}} & \makecell[l]{--\\ \vphantom{}} & \makecell[l]{No\\ \vphantom{}} \\ 
\hline
\hyperlink{compute-p1-bundle-cases-viii}{C7} & $\mathbb{P}_{\mathbb{P}^1\times\mathbb{P}^1}(\oo\oplus\oo(k, n))$ & $k, n\geq 0$ & $H-kF_1-nF_2$ & $F_1+F_2$ & & $2$ & 8.5& No \\ 
\hline
\hyperlink{compute-p1-bundle-cases-vii}{C8} & {$\mathbb{P}_{\mathbb{F}_{1}}(\oo\oplus\oo(ks+(k+m)f))$} & $k, m\geq 0$ & $H-kF_s-(k+m)F_f$ & $F_h$ & & $2$ & 8.6 & No \\ 
\hline
\hyperlink{compute-p1-bundle-cases-8}{C9} & {$\mathbb{P}_{\mathbb{P}^2}(T_{\mathbb{P}^2})$} &  & $H_1$ & $H_2$ & & 2 & -- & For some $a, b$ \\ 
\hline
\makecell[l]{\hyperlink{compute-p1-bundle-cases-9}{C10}\\ \vphantom{}} & \makecell[l]{$X\subset\mathbb{F}_1\times\mathbb{P}^2$\\$X\sim F+E+H$} & & \makecell[l]{$H-E$\\ \vphantom{}}  & \makecell[l]{$E$\\ \vphantom{}}  & & \makecell[l]{$2$\\ \vphantom{}}  & \makecell[l]{--\\ \vphantom{}} & \makecell[l]{No\\ \vphantom{}}  \\ 
\hline
\hyperlink{compute-p2-bundle-cases-remark}{D1} & $\mathbb{P}_{\mathbb{P}^1}(\oo\oplus\oo(k)\oplus\oo(n))$ & $k, n\geq 0$ & $H-kF$ & $H-nF$ & & $3$ & 9.1.3\,(i)& No \\ 
\hline
\hyperlink{compute-p2-bundle-cases-ii}{D2} & $\mathbb{P}_{\mathbb{P}^1}(\oo\oplus\oo(1)\oplus\oo(n))$ & $n\geq 1$  & $H$ & $H-nF$ & & $3$ & 9.1.3\,(ii)& No\\ 
\hline
\hyperlink{compute-p2-bundle-cases-remark}{D3} & $\mathbb{P}_{\mathbb{P}^1}(\oo\oplus\oo\oplus\oo(1))$ &  & $H$ & $H$ & & $3$ & 9.1.3\,(iii)& No \\ 
\hline
\hyperlink{compute-p2-bundle-cases-iv}{D4} & $\mathbb{P}_{\mathbb{P}^1}(\oo\oplus\oo\oplus\oo(n))$ & $n\geq 1$ & $H+F$ & $H-nF$ & & $3$ & 9.1.3\,(iv)& No\\ 
\hline
\hyperlink{compute-p2-bundle-cases-v}{D5} & $\mathbb{P}_{\mathbb{P}^1}(\oo\oplus\oo\oplus\oo)$ & & $H+F$ & $H$ & & $3$ & 9.1.3\,(v)& For some $a, b$ \\ 
\hline
\hyperlink{compute-p2-bundle-cases-remark}{D6} & $\mathbb{P}_{\mathbb{P}^1}(\oo\oplus\oo\oplus\oo(n))$ & $n\geq 1$ & $H-nF$ & $F$ & & $3/2$ & 9.1.4\,(i)& No\\ 
\hline
\hyperlink{compute-p2-bundle-cases-vii}{D7} & $\mathbb{P}_{\mathbb{P}^1}(\oo\oplus\oo\oplus\oo)$ & & $2H$ & $F$ & & $3$ & 9.1.4\,(ii)& No\\ 
\hline
\hyperlink{compute-p2-bundle-cases-viii}{D8} & $\mathbb{P}_{\mathbb{P}^1}(\oo\oplus\oo(k)\oplus\oo(n))$ & $k, n\geq 0$ & $H-kF$ & $H-nF$ & $F$ & $3$ & 9.1.4\,(iii)& No \\ 
\hline
\makecell{\hyperlink{compute-quadric-bundle-case-1}{Q1 }\\ \vphantom{}} & \makecell[l]{$X\subset \mathbb{P}_{\mathbb{P}^1}(\oo\oplus\oo\oplus\oo\oplus\oo(m))$\\$X\sim2H$} & \makecell[l]{$m\geq 1$\\ \vphantom{}} & \makecell[l]{$H-mF$\\ \vphantom{}} & \makecell[l]{$F$\\ \vphantom{}} & & \makecell[l]{$2$\\ \vphantom{}} & \makecell[l]{9.2\,(2)\\ for $m=1$} & \makecell[l]{For $m=1$\\ and some $a, b$} \\ 
\hline
\hyperref[subsec-p3]{F1} & $\mathbb{P}^3$ & & $H$ & $H$ & $H$ & $4$ & 6.1\,(iii) & No \\ 
\hline
\hyperref[subsec-p3]{F2} & $\mathbb{P}^3$ & & $H$ & $H$ & & $2$ & 6.1\,(v) & No \\ 
\hline
\hyperref[subsec-p3]{F3} & $\mathbb{P}^3$ & & $2H$ & $H$ & & $4/3$ & 6.1\,(ii) & For some $a, b$ \\ 
\hline
\hyperref[subsec-quadric]{F4} & $Q$ & & $H$ & $H$ & & $3$ & 6.2 & For some $a, b$ \\ 
\hline
\end{tabular}
\end{adjustbox}
\smallskip
\

\begin{center}
\textit{Table $1$. Log smooth log Fano threefolds $(X, \Delta)$ with integral reducible boundary $\Delta$}. 
\end{center}
\bigskip


\def\cprime{$'$} \def\mathbb#1{\mathbf#1}

\end{document}